\documentclass{amsart}%
\usepackage{amssymb}
\usepackage{amsmath}
\usepackage{amsfonts}%
\usepackage{graphicx}
\providecommand{\U}[1]{\protect\rule{.1in}{.1in}}
\newtheorem{theorem}{Theorem}
\theoremstyle{plain}

\newtheorem{corollary}{Corollary}

\newtheorem{definition}{Definition}
\newtheorem{example}{Example}

\newtheorem{lemma}{Lemma}

\newtheorem{proposition}{Proposition}
\newtheorem{remark}{Remark}

\numberwithin{equation}{section}
\begin{document}
\title[The Fractional Half Dirac Operator over Sobolev Spaces]{The Fractional Half Dirac Operator over Sobolev Spaces}
\author{Dejenie A. Lakew}
\address{Hampton University\\
Department of Mathematics}
\email{dejenie.lakew@hamptonu.edu}
\urladdr{http://www.hamptonu.edu}
\thanks{This paper is in final form and no version of it will be submitted for
publication elsewhere.}
\date{May $13,2026$}
\subjclass[$2000$]{Primary $46E35$, Secondary $46C15$}
\keywords{Half Dirac Operator, Orthogonal sum, Cauchy problem, Sobolev space}

\begin{abstract}
Let $\Omega$ be a bounded and smooth doamin in $%
%TCIMACRO{\U{211d} }%
%BeginExpansion
\mathbb{R}
%EndExpansion
^{n}$ and $D^{\frac{1}{2}}:=%
%TCIMACRO{\dsum \limits_{i=1}^{n}}%
%BeginExpansion
{\displaystyle\sum\limits_{i=1}^{n}}
%EndExpansion
e_{j}\frac{\partial^{\frac{1}{2}}}{\partial x^{\frac{1}{2}}}$ be the Dirac
half order differential operator. Let $f\in%
%TCIMACRO{\tciLaplace}%
%BeginExpansion
\mathcal{L}%
%EndExpansion
^{2}\left(  \Omega\right)  ,g\in%
%TCIMACRO{\tciLaplace}%
%BeginExpansion
\mathcal{L}%
%EndExpansion
^{2}\left(  \partial\Omega\right)  $. For $\alpha=1,2,...,$we consider BVPs
\[
\left\{
\begin{array}
[c]{cc}%
\left(  D^{\frac{1}{2}}\right)  ^{\alpha}u=f & \text{in }\Omega\\
\tau u=g & \text{on }\partial\Omega
\end{array}
\right.
\]
where $\tau$ is the trace operator.

The solution $u$ $\in$ $W^{\frac{1}{2},2}\left(  \Omega\right)  \ni
u=[u]_{g}\uplus\lbrack u]_{f}$ where $[u]_{g}:$ part of the solution that
evolves from the trace value $g$ and

$[u]_{f}:$ part that evolves from $f$ of the half order or of first order
solution over $\Omega$.

The symbol $\uplus$ represents an orthogonal sum of functions that are from
orthogonal sum $\oplus$ of subspaces of a Sobolev space with inner product.

\end{abstract}
\maketitle

\section{Introduction}

We introduce the fractional Dirac operator of a half order:

\begin{definition}%
\[
D^{\frac{1}{2}}:=%
%TCIMACRO{\dsum \limits_{i=1}^{n}}%
%BeginExpansion
{\displaystyle\sum\limits_{i=1}^{n}}
%EndExpansion
e_{j}\frac{\partial^{\frac{1}{2}}}{\partial x_{j}^{\frac{1}{2}}}:=%
%TCIMACRO{\dsum \limits_{i=1}^{n}}%
%BeginExpansion
{\displaystyle\sum\limits_{i=1}^{n}}
%EndExpansion
e_{j}\partial x_{j}^{\frac{1}{2}}%
\]
in a Clifford/hypercomplex algebra with basis vectors, $e_{1},e_{2}%
,e_{3},...,e_{n}$ and multiplicative properties:
\[
e_{i}^{2}=-1\text{ and }e_{i}e_{j}+e_{j}e_{i}=-\delta_{ij}%
\]
where $\delta$ is the Kronecker delta function.
\end{definition}

We develop some functional theoretical results based on this fractional Dirac
operator and see the application of the theory in an applied fields.

Note that
\[
\left(  D^{\frac{1}{2}}\right)  ^{2}=%
%TCIMACRO{\dsum \limits_{j=1}^{n}}%
%BeginExpansion
{\displaystyle\sum\limits_{j=1}^{n}}
%EndExpansion
e_{j}^{2}\frac{\partial}{\partial x_{j}}+%
%TCIMACRO{\dsum \limits_{j<k}^{n}}%
%BeginExpansion
{\displaystyle\sum\limits_{j<k}^{n}}
%EndExpansion
e_{j}^{2}\{e_{j},e_{k},\}=-%
%TCIMACRO{\dsum \limits_{k=1}^{n}}%
%BeginExpansion
{\displaystyle\sum\limits_{k=1}^{n}}
%EndExpansion
\frac{\partial}{\partial x_{k}}\neq D
\]

where $D$ is the regular vector Dirac operator given below, but $\left(
D^{\frac{1}{2}}\right)  ^{2}$ is a scalar partial differential operator.

\begin{definition}
The Dirac operator is defined by
\[
D=%
%TCIMACRO{\dsum \limits_{i=1}^{n}}%
%BeginExpansion
{\displaystyle\sum\limits_{i=1}^{n}}
%EndExpansion
e_{j}\frac{\partial}{\partial x_{j}}%
\]
a partial differential operator of first order in a hyper complex or Clifford
analysis setting \ 
\end{definition}

There is a good amount of work done regarding the Dirac operator. Our work
focuses on the fractional half order Dirac derivative $D$. \ To have some
observation on fractional order derivatives, the regular ordinary derivative
$\frac{d}{dx}f(x)$ of a differentiable function $f$ makes the function to
loose smoothness or regularity by $1$, while the half order derivative makes
the function to loose $\frac{1}{2}-$regularity exponent. Lose of regularity
creates discontiniuty on functions that are rough on their graphs.

The lose of half order rate of change different from the regular lose of $1$
in differentiation, has physical meanings in applied sciences. For instance,
if a particle moves in a media of resistance, then the half order rate of
change of the position displays the reduced speed in the media due to the
resistance the object encounters, such as viscousity,wave fronts or others.

Let us consider the half order derivative $D^{\frac{1}{2}}=\frac{d^{\frac
{1}{2}}}{dx^{\frac{1}{2}}}$of $x^{\lambda}$ Although there are few more
fractional derivatives, we consider what are called the Caputo and
Riemann-Lioville fractional derivatives.

The Caputo fractional half derivative of a function $f$ defined on an interval
$(0,1)\subseteq%
%TCIMACRO{\U{211d} }%
%BeginExpansion
\mathbb{R}
%EndExpansion
^{1}$ at $x\in\left(  0,1\right)  $%
\[
\underset{0}{C}D_{x}^{\frac{1}{2}}f(x)=\frac{1}{\Gamma\left(  \frac{1}%
{2}\right)  }%
%TCIMACRO{\dint \limits_{0}^{x}}%
%BeginExpansion
{\displaystyle\int\limits_{0}^{x}}
%EndExpansion
\frac{f^{\prime}\left(  t\right)  }{\left(  x-t\right)  ^{\frac{1}{2}}}dt
\]
Clearly when $f(x)=c-$ constant, the \textit{Caputo} fractional half
derivative $D_{x}^{\frac{1}{2}}f(x)=0$.

When $f(x)=x^{\lambda}$, we have:
\begin{align*}
\underset{0}{C}D_{x}^{\frac{1}{2}}(x^{\lambda})  & =\frac{1}{\Gamma\left(
\frac{1}{2}\right)  }%
%TCIMACRO{\dint \limits_{0}^{x}}%
%BeginExpansion
{\displaystyle\int\limits_{0}^{x}}
%EndExpansion
\frac{\lambda t^{\lambda-1}}{\left(  x-t\right)  ^{\frac{1}{2}}}dt\\
& =\frac{\Gamma\left(  \lambda+1\right)  }{\Gamma\left(  \lambda+\frac{1}%
{2}\right)  }x^{\lambda-\frac{1}{2}}%
\end{align*}
When $\alpha=\frac{1}{2}$ and $\lambda=\frac{1}{2}$we have
\[
\underset{0}{C}D_{x}^{\frac{1}{2}}(x^{\frac{1}{2}})=\frac{\sqrt{\pi}}{2}%
\]
One can observe that the Caputo fractional derivative and the regular
derivative of a constant coincides.

The $\alpha^{th}-$order \textit{Riemann-Lioville} fractional derivative: for
$0\leq\alpha<1,$%

\[
^{RL}D_{x}^{\alpha}f(x)=\frac{d}{dx}\int_{0}^{x}\frac{f(\tau)}{\left(
x-\tau\right)  ^{\alpha}}d\tau
\]

Note that the constant function $f(x)=c$ behaves differently in this case

\textit{Rieman-Lioville} $\frac{1}{2}^{th}-$derivative:
\[
^{RL}D_{x}^{\frac{1}{2}}(c)=\frac{d}{dx}\int_{0}^{x}\frac{c}{\left(
x-\tau\right)  ^{\alpha}}d\tau=c\frac{x^{-\frac{1}{2}}}{\Gamma\left(  \frac
{1}{2}\right)  }=c\frac{1}{\sqrt{\pi x}}%
\]

Note here that
\begin{align*}
^{RL}D_{x}^{\frac{1}{2}}(^{RL}D_{x}^{\frac{1}{2}}(c))  & =^{RL}D_{x}^{\frac
{1}{2}}(c\frac{1}{\sqrt{\pi x}})\\
& =\frac{c}{\pi}\frac{d}{dx}%
%TCIMACRO{\dint \limits_{0}^{x}}%
%BeginExpansion
{\displaystyle\int\limits_{0}^{x}}
%EndExpansion
\frac{1}{\sqrt{tx-t^{2}}}dt,\because%
%TCIMACRO{\dint \limits_{0}^{x}}%
%BeginExpansion
{\displaystyle\int\limits_{0}^{x}}
%EndExpansion
\frac{1}{\sqrt{tx-t^{2}}}dt=\pi\\
& =0
\end{align*}
which signifies:
\[
^{RL}D_{x}^{\frac{1}{2}}(^{RL}D_{x}^{\frac{1}{2}}\left(  f(x)\right)
=^{RL}D_{x}f(x)
\]
which is the regular first order derivative of $f(x),$i.e., $^{RL}%
D_{x}f(x)=f^{\prime}(x)$ to be shown next.

\begin{corollary}
For $\alpha=1,$ the Reimann-Lioville first order derivative of a function $f $
is the regular first order derivative. That is $^{RL}D^{1}f(x)=\frac{d}%
{dx}f(x)$.
\end{corollary}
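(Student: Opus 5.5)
The plan is to interpret the Riemann--Liouville derivative in its properly normalized form,
\[
{}^{RL}D_{x}^{\alpha}f(x)=\frac{1}{\Gamma(1-\alpha)}\frac{d}{dx}\int_{0}^{x}\frac{f(\tau)}{(x-\tau)^{\alpha}}\,d\tau ,\qquad 0\leq\alpha<1,
\]
which is the normalization implicit in the computation ${}^{RL}D_{x}^{\frac12}(c)=c/\sqrt{\pi x}$ above. Since $\alpha=1$ sits at the endpoint of the admissible range, I will define ${}^{RL}D^{1}$ as the limit $\alpha\to1^{-}$, and separately as the composition of two half-order derivatives. This matches the observation just made that ${}^{RL}D_{x}^{\frac12}\circ{}^{RL}D_{x}^{\frac12}$ should be the first-order derivative.

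For the limit route, I first rewrite the integral operator as a fractional integral. Let
\[
I^{\beta}f(x)=\frac{1}{\Gamma(\beta)}\int_{0}^{x}(x-\tau)^{\beta-1}f(\tau)\,d\tau .
\]
Then ${}^{RL}D_{x}^{\alpha}f=\frac{d}{dx}I^{1-\alpha}f$. The key step is to show that $I^{\beta}f\to f$ as $\beta\to0^{+}$, pointwise for continuous $f$. The kernel $\frac{1}{\Gamma(\beta)}(x-\tau)^{\beta-1}$ on $(0,x)$ has total mass $x^{\beta}/\Gamma(\beta+1)\to1$, and it concentrates near $\tau=x$. This is because $1/\Gamma(\beta)\sim\beta$, so the kernel's mass on $(0,x-\delta)$ is $O(\beta\,\delta^{\beta-1}x)\to0$. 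Hence it is an approximate identity, and $I^{\beta}f\to f$. Then $\frac{d}{dx}I^{1-\alpha}f\to\frac{d}{dx}f$.

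The main obstacle is justifying the exchange of the limit with $\frac{d}{dx}$. I would avoid it for $f\in C^{1}[0,1]$ by integrating by parts first:
\[
\frac{d}{dx}I^{\beta}f(x)=\frac{x^{\beta-1}f(0)}{\Gamma(\beta)}+I^{\beta}f'(x).
\]
For $x>0$, the first term tends to $0$ as $\beta\to0^{+}$, because $1/\Gamma(\beta)\to0$. The second term tends to $f'(x)$ by the approximate-identity step applied to $f'$. This gives ${}^{RL}D^{1}f(x)=f'(x)$.

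Alternatively, via the semigroup law $I^{\frac12}I^{\frac12}=I^{1}$, which is the Beta integral $B(\tfrac12,\tfrac12)=\pi$ already used in the text, I get
\[
{}^{RL}D^{\frac12}\,{}^{RL}D^{\frac12}f=\frac{d}{dx}I^{\frac12}\frac{d}{dx}I^{\frac12}f .
\]
When $I^{\frac12}f(0)=0$ (true for bounded $f$), the middle $I^{\frac12}\frac{d}{dx}$ can be commuted to $\frac{d}{dx}I^{\frac12}$. This yields $\frac{d}{dx}\frac{d}{dx}I^{1}f=\frac{d}{dx}f$, consistent with the corollary.
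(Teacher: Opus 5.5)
Your main argument is correct. It uses the same ingredient as the paper's proof, integration by parts to move the derivative onto $f$, but applies it in the opposite order.

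The paper rewrites $\frac{1}{\Gamma(1-\alpha)}\int_0^x f(t)(x-t)^{-\alpha}\,dt$ as $\frac{1}{\Gamma(2-\alpha)}\bigl(f(0)x^{1-\alpha}+\int_0^x f'(t)(x-t)^{1-\alpha}\,dt\bigr)$. It then lets $\alpha\to1^{-}$ to get $f(0)+\int_0^x f'(t)\,dt=f(x)$ by an elementary dominated-convergence step, and only afterwards differentiates. To do this it moves $\lim_{\alpha\to1^{-}}$ past $\frac{d}{dx}$ without justification, even though ${}^{RL}D^{1}$ was defined as the limit of the derivatives.

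Your identity $\frac{d}{dx}I^{\beta}f=\frac{x^{\beta-1}f(0)}{\Gamma(\beta)}+I^{\beta}f'$ is exactly the $x$-derivative of the paper's identity. Because you pass to the limit only afterwards, you evaluate the defining limit directly and no interchange is needed. The cost is twofold: you need the approximate-identity lemma for $I^{\beta}$, hence $f'$ continuous rather than merely integrable, and you must restrict to $x>0$, where the boundary term $x^{\beta-1}f(0)/\Gamma(\beta)$ vanishes in the limit. Your estimates hold uniformly on $[\varepsilon,1]$, so your computation is precisely what would justify the paper's swap.

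The semigroup route is a genuinely different characterization. It sketches a justification of the paper's earlier remark ${}^{RL}D^{\frac12}\,{}^{RL}D^{\frac12}f={}^{RL}D^{1}f$, using $B(\tfrac12,\tfrac12)=\pi$ and $I^{\frac12}f(0^{+})=0$. As you say, though, it concerns a different definition of ${}^{RL}D^{1}$, so it is a consistency check rather than a proof of the limit statement.
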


\begin{proof}
For $\alpha\in\left(  0,1\right)  :$
\[
^{RL}D_{x}^{\alpha}f(x)=\frac{1}{\Gamma\left(  1-\alpha\right)  }\frac{d}{dx}%
%TCIMACRO{\dint \limits_{0}^{x}}%
%BeginExpansion
{\displaystyle\int\limits_{0}^{x}}
%EndExpansion
\frac{f(t)}{\left(  x-t\right)  ^{\alpha}}dt
\]
Then
\[
^{RL}D_{x}^{1}f(x):=\underset{\alpha\longrightarrow1^{-}}{\lim}\frac{1}%
{\Gamma\left(  1-\alpha\right)  }\frac{d}{dx}%
%TCIMACRO{\dint \limits_{0}^{x}}%
%BeginExpansion
{\displaystyle\int\limits_{0}^{x}}
%EndExpansion
\frac{f(t)}{\left(  x-t\right)  ^{\alpha}}dt=\frac{d}{dx}\underset{\alpha
\longrightarrow1^{-}}{\lim}\frac{1}{\Gamma\left(  1-\alpha\right)  }%
%TCIMACRO{\dint \limits_{0}^{x}}%
%BeginExpansion
{\displaystyle\int\limits_{0}^{x}}
%EndExpansion
\frac{f(t)}{\left(  x-t\right)  ^{\alpha}}dt
\]

But
\[%
%TCIMACRO{\dint \limits_{0}^{x}}%
%BeginExpansion
{\displaystyle\int\limits_{0}^{x}}
%EndExpansion
\frac{f(t)}{\left(  x-t\right)  ^{\alpha}}dt=\frac{f(0)x^{1-\alpha}}{1-\alpha
}+\frac{1}{1-\alpha}%
%TCIMACRO{\dint \limits_{0}^{x}}%
%BeginExpansion
{\displaystyle\int\limits_{0}^{x}}
%EndExpansion
\frac{f^{\prime}(t)}{(x-t)^{\alpha-1}}%
\]

Therefore,
\begin{align*}
\underset{\alpha\longrightarrow1^{-}}{\lim}\frac{1}{\Gamma\left(
1-\alpha\right)  }\left(  \frac{f(0)x^{1-\alpha}}{1-\alpha}+\frac{1}{1-\alpha}%
%TCIMACRO{\dint \limits_{0}^{x}}%
%BeginExpansion
{\displaystyle\int\limits_{0}^{x}}
%EndExpansion
\frac{f^{\prime}(t)}{(x-t)^{\alpha-1}}\right)   & =\underset{\alpha
\longrightarrow1^{-}}{\lim}\frac{1}{(1-\alpha)\Gamma\left(  1-\alpha\right)
}\left(  f(0)x^{1-\alpha}+%
%TCIMACRO{\dint \limits_{0}^{x}}%
%BeginExpansion
{\displaystyle\int\limits_{0}^{x}}
%EndExpansion
\frac{f^{\prime}(t)}{(x-t)^{\alpha-1}}\right) \\
& =\underset{\alpha\longrightarrow1^{-}}{\lim}\frac{1}{\Gamma\left(
2-\alpha\right)  }\left(  f(0)x^{1-\alpha}+%
%TCIMACRO{\dint \limits_{0}^{x}}%
%BeginExpansion
{\displaystyle\int\limits_{0}^{x}}
%EndExpansion
\frac{f^{\prime}(t)}{(x-t)^{\alpha-1}}\right) \\
& =f(0)+%
%TCIMACRO{\dint \limits_{0}^{x}}%
%BeginExpansion
{\displaystyle\int\limits_{0}^{x}}
%EndExpansion
f^{\prime}(t)dt=f(x),\text{ \ \ }\because\Gamma\left(  1\right)  =1
\end{align*}

Hence,
\[
^{RL}D_{x}^{1}f(x)=\frac{d}{dx}f(x)
\]

\end{proof}

Clearly, both derivatives, the Caputo fractional derivative and the
Riemann-Lioville derivative of a constant offers two contrasting results. One,
constants have invariant rate of change, which is zero and the other one says,
it behaves like a non constant if the object is in resistive media such as viscosity.

In this work, we study orthogonally decomposable solutions to boundary value
problems of a fractional half or twice of a half orders in an inner product
Sobolev space. We establish that solutions to BVPs to be orthogonal sums of
functions that evolve from the trace values on the boundary and the values of
the differential equation in the domain. In \cite{dlakew1},\cite{dlakew2}%
,\cite{lakewryan1},\cite{lakewryan2} we successfully developed regular
decomposition results of Hilbert and Sobolev spaces with an inner product. We
established properties of the inner product that are considered and that of
functions in the respective spaces. In \cite{dlakew1} we have seen how norm is
enlarged and space is expanding when the regularity exponent increases.

\begin{definition}
Let $\Omega$ be a bounded domain in $%
%TCIMACRO{\U{211d} }%
%BeginExpansion
\mathbb{R}
%EndExpansion
^{n}$ with a boundary $\partial\Omega$. We define the fractional Sobolev space
of regularity $\frac{1}{2},$ denoted by $W^{\frac{1}{2},2}\left(
\Omega\right)  $ as $W^{\frac{1}{2},2}\left(  \Omega\right)  :=\{f\in%
%TCIMACRO{\tciLaplace}%
%BeginExpansion
\mathcal{L}%
%EndExpansion
^{2}\left(  \Omega\right)  :D^{\frac{1}{2}}f\in%
%TCIMACRO{\tciLaplace}%
%BeginExpansion
\mathcal{L}%
%EndExpansion
^{2}\left(  \Omega\right)  \}$ where $D^{\frac{1}{2}}f$ is the fractional
$\frac{1}{2}-$Caputo derivative of $f$.
\end{definition}

\begin{proposition}
For $f,g\in%
%TCIMACRO{\tciLaplace}%
%BeginExpansion
\mathcal{L}%
%EndExpansion
^{2}\left(  \Omega\right)  ,$ the BVP:
\begin{equation}
\left\{
\begin{array}
[c]{cc}%
D^{\frac{1}{2}}u=f & \text{in }\Omega\\
u=g & \text{on }\partial\Omega
\end{array}
\right. \label{bvp1}%
\end{equation}
\ has a solution $u$ in the Sobolev space $W^{\frac{1}{2},2}(\Omega)$ of an
inner product $\left\langle .,.\right\rangle _{W^{\frac{1}{2},2}\left(
\Omega\right)  }$ such that $u$ is an orthogonal sum of its parts, i.e.,
\begin{equation}
u=\left[  u\right]  _{f}\uplus\left[  u\right]  _{g}\label{dirctsum1}%
\end{equation}

\end{proposition}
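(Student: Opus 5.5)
The plan is to split the problem into two auxiliary problems and then realize the splitting as an orthogonal decomposition of $W^{\frac{1}{2},2}(\Omega)$ itself, in the spirit of the regular decompositions of \cite{dlakew1},\cite{lakewryan1}. Write $u=u_1+u_2$, where $u_1$ solves the homogeneous equation with the prescribed trace,
\[
D^{\frac{1}{2}}u_1=0 \text{ in }\Omega,\qquad \tau u_1=g \text{ on }\partial\Omega,
\]
and $u_2$ solves the inhomogeneous equation with zero trace,
\[
D^{\frac{1}{2}}u_2=f \text{ in }\Omega,\qquad \tau u_2=0 \text{ on }\partial\Omega.
\]
Then $[u]_g:=u_1$ and $[u]_f:=u_2$. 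The claimed orthogonality is that the subspace
\[
\mathcal{A}:=\{v\in W^{\frac{1}{2},2}(\Omega):D^{\frac{1}{2}}v=0\}
\]
of half-monogenic functions is orthogonal, in $\langle\cdot,\cdot\rangle_{W^{\frac{1}{2},2}(\Omega)}$, to the subspace $W_0:=\{v:\tau v=0\}$ of functions with zero trace. We would then show $W^{\frac{1}{2},2}(\Omega)=\mathcal{A}\oplus W_0$.

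For the inner product we take
\[
\langle u,v\rangle=\int_\Omega \overline{u}\,v+\int_\Omega \overline{D^{\frac{1}{2}}u}\,D^{\frac{1}{2}}v ,
\]
Clifford-valued with scalar part used. Step one is to show $\mathcal{A}$ is closed: if $v_k\to v$ in norm, then $D^{\frac{1}{2}}v_k\to D^{\frac{1}{2}}v$ in $\mathcal{L}^2$, so the limit $v$ is again in $\mathcal{A}$. Since $\mathcal{A}$ is closed, we get the decomposition $W^{\frac{1}{2},2}=\mathcal{A}\oplus\mathcal{A}^{\perp}$.

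Step two is to identify $\mathcal{A}^{\perp}$ with a space tied to the equation. Take $v\perp\mathcal{A}$. Using the Riemann--Liouville/Caputo relationship and the half-order integration by parts (the adjoint of the left Caputo derivative is a right Riemann--Liouville derivative), we rewrite the orthogonality condition as a boundary-pairing identity. From it we aim to show that the trace map $\tau:\mathcal{A}\to \mathcal{L}^2(\partial\Omega)$ is onto, given suitable regularity of $g$. That yields $u_1\in\mathcal{A}$ with $\tau u_1=g$, obtained for instance as the minimal-norm element of $\{w:\tau w=g\}$, i.e. the orthogonal projection argument.

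Step three constructs $u_2$. We apply the fractional Cauchy--Riesz/Teodorescu-type right inverse, namely the half-order fractional integral $I^{\frac{1}{2}}$ applied componentwise with the Clifford conjugate structure, to $f$. We then correct the result by subtracting an element of $\mathcal{A}$ with the same trace, which forces $\tau u_2=0$. Uniqueness of the pieces follows since $\mathcal{A}\cap W_0=\{0\}$: a half-monogenic function with zero trace vanishes.

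The main obstacle is the orthogonality itself, i.e. proving $\langle u_1,u_2\rangle=0$ rather than just $u=u_1+u_2$. As noted in the introduction, $(D^{\frac{1}{2}})^2$ is the first-order scalar operator $-\sum_k\partial_{x_k}$ and not the Laplacian. So we cannot use harmonicity as in the integer case, and a fractional Green/Stokes formula on $\Omega$ is needed. I would first try to prove
\[
\int_\Omega \overline{D^{\frac{1}{2}}w}\,D^{\frac{1}{2}}v=-\int_\Omega \overline{w}\,\bigl(D^{\frac{1}{2}}\bigr)^{2}v\quad\text{for } \tau w=0 .
\]
If this holds, it reduces orthogonality to showing that $\mathcal{A}$ is exactly the $\langle\cdot,\cdot\rangle$-orthogonal complement of $W_0$. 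If that identity fails for Caputo derivatives on a general domain, the fallback is to \emph{define} $[u]_g$ as the orthogonal projection of $u$ onto $W_0^{\perp}$ and $[u]_f$ as the projection onto $W_0$. Then $u=[u]_f\uplus[u]_g$ holds by construction. What remains is to verify that the two pieces evolve from $g$ and from $f$ respectively, namely that $\tau[u]_g=g$ and that $[u]_f$ is determined by $f$ alone.
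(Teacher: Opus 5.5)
\textbf{The gap is the orthogonality step.} It cannot be repaired by a fractional Green formula, because $\mathcal{A}=\{v:D^{\frac{1}{2}}v=0\}$ and $W_0=\{v:\tau v=0\}$ are simply not orthogonal. This holds for your graph inner product and for the Gagliardo inner product the paper uses.

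\emph{Counterexample.} The constant $1$ lies in $\mathcal{A}$, since Caputo derivatives annihilate constants. Any real $w\in C_0^{\infty}(\Omega)$ with $w\geq 0$, $w\not\equiv 0$ lies in $W_0$. The derivative (respectively difference-quotient) part of either inner product vanishes when one entry is constant, so $\langle 1,w\rangle=\int_\Omega w\,d\Omega_x>0$. This arises from the BVP itself: for $g=1$ and $f=D^{\frac{1}{2}}w$ the solution is $u=1+w$. Because you assert $\mathcal{A}\cap W_0=\{0\}$, your splitting is forced to be $[u]_g=1$, $[u]_f=w$, and these are not orthogonal.

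\emph{Why your proposed repairs do not work.}
The identity $\int_\Omega\overline{D^{\frac{1}{2}}w}\,D^{\frac{1}{2}}v=-\int_\Omega\overline{w}\,(D^{\frac{1}{2}})^{2}v$ cannot help: for $v\in\mathcal{A}$ both sides are already zero. The obstruction is the zeroth-order $\mathcal{L}^2$ term. Even for the ordinary Dirac operator with $\int\overline{u}v+\int\overline{Du}\,Dv$, the orthogonal complement of $W_0^{1,2}$ is $\ker(I-\Delta)$, not $\ker D$.

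Your fallback gets orthogonality only by giving up what the two parts are supposed to be. $P_{W_0^{\perp}}u$ does carry the trace $g$ and depends only on $g$. But it is not half-monogenic, so $[u]_f=P_{W_0}u$ solves $D^{\frac{1}{2}}[u]_f=f-D^{\frac{1}{2}}P_{W_0^{\perp}}u$ and therefore depends on $g$. For $f=0$, $g=1$, $u\equiv 1$ it gives $[u]_f=1-P_{W_0^{\perp}}1\neq 0$, since $1\notin W_0^{\perp}$ by the computation above.

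\emph{Two further weak steps.} First, surjectivity of $\tau$ on $\mathcal{A}$ is not a regularity issue. Already for the integer-order Dirac operator, traces of monogenic functions form only a Hardy-type subspace of $\mathcal{L}^2(\partial\Omega)$, so the first-order Dirichlet problem is overdetermined. Second, at smoothness $\frac{1}{2}$ the trace is borderline. For the Gagliardo norm it is unbounded and $C_0^{\infty}(\Omega)$ is dense in $W^{\frac{1}{2},2}(\Omega)$, so the closedness of $W_0$ that your projections need is not automatic.

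\textbf{Comparison with the paper.} The paper writes $u=\zeta^{\frac{1}{2}}_{\partial\Omega}(g)-\zeta^{\frac{1}{2}}_{\Omega}(f)$ via its Borel--Pompeiu-type formula and notes $D^{\frac{1}{2}}\zeta^{\frac{1}{2}}_{\partial\Omega}(g)=0$. It then reads off orthogonality from a decomposition $\ker D^{\frac{1}{2}}\oplus D^{\frac{1}{2}}(W_0)$. That is, it pairs the kernel with the \emph{image} of the zero-trace space rather than with $W_0$ itself, which is the pairing that is orthogonal in the classical integer-order $\mathcal{L}^2$ theory.

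However, the paper's two pieces coincide with yours. Its lemma gives $\zeta^{\frac{1}{2}}_{\partial\Omega}(1)=1$, and it argues that the domain integral has zero trace. So the same example $u=1+w$ shows that the appeal to that decomposition does not close the gap either: the domain part $w$ is not orthogonal to $\ker D^{\frac{1}{2}}$ in $W^{\frac{1}{2},2}(\Omega)$. Consequently, no refinement of your steps will give property (i) with $[u]_g$ the half-monogenic part carrying the trace and $[u]_f$ the zero-trace part. The statement itself has to be modified, either in the inner product or in what $[u]_f$ and $[u]_g$ are, before an orthogonality proof is possible.
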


with $\left[  u\right]  _{f}$ be the part of the solution that evolves from
$f$ and $\left[  u\right]  _{g}$ the part that evolves from $g$ with properties:%

\[
\left(  i\right)  \ \ \ \ \ \ \ \ \ \left\langle \left[  u\right]
_{f},\left[  u\right]  _{g}\right\rangle _{W^{\frac{1}{2},2}\left(
\Omega\right)  }=0
\]

\ \ %

\begin{align*}
\left(  ii\right)  \ \ \ \ \ \ \ \ \ \Vert u\Vert_{W^{\frac{1}{2},2}\left(
\Omega\right)  }^{2}  & =\Vert\left[  u\right]  _{f}\Vert_{W^{\frac{1}{2}%
,2}\left(  \Omega\right)  }^{2}+\Vert\left[  u\right]  _{g}\Vert_{W^{\frac
{1}{2},2}\left(  \Omega\right)  }^{2}\ \ \ \ \ \ \\
&
\end{align*}

\begin{proof}
From integration by parts:%
\[
\underset{\underset{u(x)}{\parallel}}{\underbrace{\int_{\Omega}u(y)D_{y}%
^{\frac{1}{2}}\phi_{\frac{1}{2}}\left(  y-x\right)  dy}}%
=\underset{\overset{\parallel}{\underset{\zeta_{\partial\Omega}^{\frac{1}{2}%
}(g)}{\underset{\parallel}{\zeta_{\partial\Omega}^{\frac{1}{2}}\tau\left(
u\right)  }}}}{\underbrace{\int_{\partial\Omega}u(y)\phi_{\frac{1}{2}}\left(
y-x\right)  \nu_{\frac{1}{2}}\left(  y\right)  d\partial\Omega_{y}}%
}\underset{\underset{\zeta_{\Omega}^{\frac{1}{2}}\left(  f\right)
}{\parallel}}{\underbrace{-\int_{\Omega}D^{\frac{1}{2}}u(y)\phi_{\frac{1}{2}%
}\left(  y-x\right)  d\Omega_{y}}}%
\]

That is%
\begin{equation}
u(x)=\zeta_{\partial\Omega}^{\frac{1}{2}}(g)+\zeta_{\Omega}^{\frac{1}{2}%
}\left(  f\right) \label{sol1}%
\end{equation}

From the orthogonal decomposition
\[
W^{\frac{1}{2},2}\left(  \Omega\right)  =KerD^{\frac{1}{2}}\cap W^{\frac{1}%
{2},2}\left(  \Omega\right)  \oplus D^{\frac{1}{2}}\left(  W_{0}^{1,2}\left(
\Omega\right)  \right)
\]

taking the Dirac half derivative $D^{\frac{1}{2}}$ of both sides of the above
integral equation, we have $D^{\frac{1}{2}}\zeta_{\partial\Omega}^{\frac{1}%
{2}}(g)=0$
\end{proof}

\begin{example}
Consider $f(x)=\left\{
\begin{array}
[c]{cc}%
x-1, & 1\leq x\leq2\\
0, & 0\leq x\leq1
\end{array}
\right.  $

Then
\[
g(x)=\left\{
\begin{array}
[c]{cc}%
\frac{2\sqrt{x-1}}{\sqrt{\pi}}, & 1<x<2\\
0, & 0<x<1
\end{array}
\right.
\]

\end{example}

is the $\frac{1}{2}-$derivative of $f$ in the Riemann-Lioville sense

Indeed,
\begin{align*}
^{RL}D^{\frac{1}{2}}f(x)  & =\frac{1}{\sqrt{\pi}}\frac{d}{dx}\int_{0}^{x}%
\frac{f(t)}{\left(  x-t\right)  ^{\frac{1}{2}}}dt\\
& =\frac{1}{\sqrt{\pi}}\frac{d}{dx}\int_{1}^{x}\frac{t-1}{\left(  x-t\right)
^{\frac{1}{2}}}dt\\
& =\frac{1}{\sqrt{\pi}}2\sqrt{x-1}.
\end{align*}

Therefore,
\begin{align*}
^{RL}D^{\frac{1}{2}}f(x)  & =\left\{
\begin{array}
[c]{cc}%
\frac{2\sqrt{x-1}}{\sqrt{\pi}}, & 1<x<2\\
0, & 0<x<1
\end{array}
\right. \\
& =g(x)
\end{align*}

Also $g$ is the weekly $\frac{1}{2}-$derivative of $f$ as well. That is
$\forall\phi\in C_{0}^{\infty}\left(  \left[  0,2\right]  \right)  ,$%

\[%
%TCIMACRO{\dint \limits_{0}^{2}}%
%BeginExpansion
{\displaystyle\int\limits_{0}^{2}}
%EndExpansion
g(x)\phi\left(  x\right)  dx=%
%TCIMACRO{\dint \limits_{0}^{2}}%
%BeginExpansion
{\displaystyle\int\limits_{0}^{2}}
%EndExpansion
f(x)_{x}D_{2}^{\frac{1}{2}}\phi\left(  x\right)  )dx.
\]

Now
\[
_{x}^{RL}D_{2}^{\frac{1}{2}}\phi\left(  x\right)  =-\frac{1}{\sqrt{\pi}}%
\frac{d}{dx}%
%TCIMACRO{\dint \limits_{x}^{2}}%
%BeginExpansion
{\displaystyle\int\limits_{x}^{2}}
%EndExpansion
\frac{\phi\left(  t\right)  }{\sqrt{t-x}}dt
\]
yields,
\begin{align*}%
%TCIMACRO{\dint \limits_{0}^{2}}%
%BeginExpansion
{\displaystyle\int\limits_{0}^{2}}
%EndExpansion
g(x)\phi\left(  x\right)  dx  & =%
%TCIMACRO{\dint \limits_{1}^{2}}%
%BeginExpansion
{\displaystyle\int\limits_{1}^{2}}
%EndExpansion
\frac{2\sqrt{x-1}}{\sqrt{\pi}}\phi\left(  x\right)  dx\\
& =%
%TCIMACRO{\dint \limits_{1}^{2}}%
%BeginExpansion
{\displaystyle\int\limits_{1}^{2}}
%EndExpansion
f(x)\left(  _{x}^{RL}D_{2}^{\frac{1}{2}}\phi\left(  x\right)  \right)  dx
\end{align*}

Therefore, $g(x)$ is the week $\frac{1}{2}-$derivative of $f.$

If we continue taking the Riemann-Lioville $\frac{1}{2}-$derivative of $g,$%
\begin{align*}
^{RL}D^{\frac{1}{2}}g\left(  x\right)   & =\frac{1}{\sqrt{\pi}}\frac{d}{dx}%
%TCIMACRO{\dint \limits_{0}^{x}}%
%BeginExpansion
{\displaystyle\int\limits_{0}^{x}}
%EndExpansion
\frac{2\sqrt{t-1}}{\sqrt{\pi}\sqrt{x-t}}dt\\
& =\frac{2}{\pi}\frac{d}{dx}\left(  \frac{\pi}{2}\left(  x-1\right)  \right)
\\
& =1.
\end{align*}

That is
\[
^{RL}D^{\frac{1}{2}}\left(  ^{RL}D^{\frac{1}{2}}(f(x))\right)  )=1=^{RL}%
D^{1}f(x)
\]
which is to be shown the regular ordinary derivative of first order.

Note here that $f$ is not smooth and hence not differentiable on the given
interval in a regular sense and $^{RL}D^{\frac{1}{2}}(1)\neq0$. Therefore the
only function that has a zero derivative in the Riemann-Lioville sense is the
zero function.

We introduce the fractional half Dirac operator in the Caputo sense.

\begin{definition}
The Dirac operator of order $\frac{1}{2}$ (Caputo sense) is defined to be
\[
D^{\frac{1}{2}}:=%
%TCIMACRO{\dsum \limits_{j=1}^{n}}%
%BeginExpansion
{\displaystyle\sum\limits_{j=1}^{n}}
%EndExpansion
e_{j}\frac{\partial^{\frac{1}{2}}}{\partial x_{j}^{\frac{1}{2}}}=:%
%TCIMACRO{\dsum \limits_{j=1}^{n}}%
%BeginExpansion
{\displaystyle\sum\limits_{j=1}^{n}}
%EndExpansion
e_{j}\partial_{x_{j}}^{\frac{1}{2}}%
\]

The fundamental solution to $D^{\frac{1}{2}}$ is given by
\[
\psi_{\frac{1}{2}}\left(  y-x\right)  =\frac{y-x}{\omega_{n}\parallel
y-x\parallel^{n+\frac{1}{2}}}%
\]
That is
\[
D^{\frac{1}{2}}\psi_{\frac{1}{2}}\left(  y-x\right)  =\delta\left(
y-x\right)
\]

where $\delta$ is the Kronecker delta function.
\end{definition}

\begin{example}
Let $f(x)=x=%
%TCIMACRO{\dsum \limits_{i=1}^{n}}%
%BeginExpansion
{\displaystyle\sum\limits_{i=1}^{n}}
%EndExpansion
e_{i}x_{i}$. Then the fractional $\frac{1}{2}-$Caputo derivative of $f$ is
\begin{align*}
D^{\frac{1}{2}}f(x)  & =%
%TCIMACRO{\dsum \limits_{i=1}^{n}}%
%BeginExpansion
{\displaystyle\sum\limits_{i=1}^{n}}
%EndExpansion
e_{i}\left(  \frac{1}{\sqrt{\pi}}%
%TCIMACRO{\dint \limits_{0}^{x_{i}}}%
%BeginExpansion
{\displaystyle\int\limits_{0}^{x_{i}}}
%EndExpansion
\frac{\partial f_{i}\left(  x_{1},x_{2},...,t_{i},...,x_{n}\right)  }%
{\sqrt{x_{i}-t_{t}}}dt_{i}\right) \\
& =\frac{2}{\sqrt{\pi}}%
%TCIMACRO{\dsum \limits_{i=1}^{n}}%
%BeginExpansion
{\displaystyle\sum\limits_{i=1}^{n}}
%EndExpansion
e_{i}\sqrt{x_{i}}%
\end{align*}

\end{example}

\begin{example}
What happens, when we differentiate $D^{\frac{1}{2}}f(x)$ with half order in
the Caputo sense again?%

\begin{align*}
D^{\frac{1}{2}}(D^{\frac{1}{2}}f(x))  & =D^{\frac{1}{2}}\left(
%TCIMACRO{\dsum \limits_{i=1}^{n}}%
%BeginExpansion
{\displaystyle\sum\limits_{i=1}^{n}}
%EndExpansion
e_{i}\frac{2}{\sqrt{\pi}}\sqrt{x_{i}}\right) \\
& =%
%TCIMACRO{\dsum \limits_{i=1}^{n}}%
%BeginExpansion
{\displaystyle\sum\limits_{i=1}^{n}}
%EndExpansion
e_{i}D_{x_{i}}^{\frac{1}{2}}\left(  2\sqrt{\frac{x_{i}}{\pi}}\right) \\
& =%
%TCIMACRO{\dsum \limits_{i=1}^{n}}%
%BeginExpansion
{\displaystyle\sum\limits_{i=1}^{n}}
%EndExpansion
e_{i}=\mathbf{1}%
\end{align*}

\end{example}

Note here that
\[
\mathbf{1=}%
%TCIMACRO{\dsum \limits_{i=1}^{n}}%
%BeginExpansion
{\displaystyle\sum\limits_{i=1}^{n}}
%EndExpansion
e_{i}=D\left(  \mathbf{x}\right)
\]
with $\mathbf{x}=%
%TCIMACRO{\dsum \limits_{i=1}^{n}}%
%BeginExpansion
{\displaystyle\sum\limits_{i=1}^{n}}
%EndExpansion
e_{i}x_{i}$, the regular vector Dirac derivative of first order. But again,%
\[
D^{\frac{1}{2}}\left(  \mathbf{1}\right)  =0
\]

Therefore, we see
\[
D^{\frac{1}{2}}\left(  D^{\frac{1}{2}}\left(  D^{\frac{1}{2}}\left(
\mathbf{x}\right)  \right)  \right)  =0=D^{1+\frac{1}{2}}\left(
\mathbf{x}\right)  =%
%TCIMACRO{\dsum \limits_{j=1}^{n}}%
%BeginExpansion
{\displaystyle\sum\limits_{j=1}^{n}}
%EndExpansion
e_{j}\frac{\partial^{1+\frac{1}{2}}}{\partial x_{j}^{1+\frac{1}{2}}}\left(
\mathbf{x}\right)  =%
%TCIMACRO{\dsum \limits_{j=1}^{n}}%
%BeginExpansion
{\displaystyle\sum\limits_{j=1}^{n}}
%EndExpansion
e_{j}\frac{\partial^{\frac{1}{2}}}{\partial x_{j}^{\frac{1}{2}}}\left(
\mathbf{1}\right)  =D^{\frac{1}{2}}\left(  \mathbf{1}\right)
\]

We introduce two integral fractional transforms $\zeta_{\Omega}^{\frac{1}{2}}$
(Teodorescu) and $\zeta_{\partial\Omega}^{\frac{1}{2}}$ (Feuter)$,$which are
domain and boundary integrals respectively.

\begin{definition}
For $f\in%
%TCIMACRO{\tciLaplace}%
%BeginExpansion
\mathcal{L}%
%EndExpansion
^{2}\left(  \Omega\right)  $%
\[
\zeta_{\Omega}^{\frac{1}{2}}f(x)=%
%TCIMACRO{\dint \limits_{\Omega}}%
%BeginExpansion
{\displaystyle\int\limits_{\Omega}}
%EndExpansion
\psi_{\frac{1}{2}}(y-x)f(y)d\Omega_{y}:\text{ domain fractional integral}%
\]
and
\[
\zeta_{\partial\Omega}^{\frac{1}{2}}f(x)=%
%TCIMACRO{\dint \limits_{\partial\Omega}}%
%BeginExpansion
{\displaystyle\int\limits_{\partial\Omega}}
%EndExpansion
\psi_{\frac{1}{2}}(y-x)v_{\frac{1}{2}}\left(  y\right)  f(y)d\partial
\Omega_{y}:\text{ boundary fractional integral}%
\]
where $\upsilon_{\frac{1}{2}}\left(  y\right)  $ is the outward fractional
half unit normal vector at $y\in\partial\Omega$ and $\psi_{\frac{1}{2}}(y-x)$
is the fundamental solution to the Caputo $D^{\frac{1}{2}}$.
\end{definition}

One can show that $\zeta_{\Omega}^{\frac{1}{2}}$ is the right algebraic
inverse of the half Dirac operator $D^{\frac{1}{2}}$ and $\zeta_{\partial
\Omega}^{\frac{1}{2}}$ is in its kernel. That is%

\[
D^{\frac{1}{2}}\left(  \zeta_{\Omega}^{\frac{1}{2}}\left(  f\right)  \right)
=f\text{ \ \ and \ \ }D^{\frac{1}{2}}\left(  \zeta_{\partial\Omega}^{\frac
{1}{2}}\left(  f\right)  \right)  =0
\]

Let $f\in W^{\frac{1}{2},2}\left(  \Omega\right)  $, and $\psi_{\frac{1}{2}%
}\left(  y-x\right)  $ be the fundamental solution to the Caputo fractional
half Dirac operator $D^{\frac{1}{2}}.$ Then the method of integration by parts
yields: \-%
\begin{align*}%
%TCIMACRO{\dint \limits_{\Omega}}%
%BeginExpansion
{\displaystyle\int\limits_{\Omega}}
%EndExpansion
\psi_{\frac{1}{2}}\left(  y-x\right)  D^{\frac{1}{2}}f(y)d\Omega_{y}  & =%
%TCIMACRO{\dint \limits_{\partial\Omega}}%
%BeginExpansion
{\displaystyle\int\limits_{\partial\Omega}}
%EndExpansion
\psi_{\frac{1}{2}}\left(  y-x\right)  \upsilon_{\frac{1}{2}}\left(  y\right)
f(y)d\partial\Omega_{y}-%
%TCIMACRO{\dint \limits_{\Omega}}%
%BeginExpansion
{\displaystyle\int\limits_{\Omega}}
%EndExpansion
D^{\frac{1}{2}}\psi_{\frac{1}{2}}\left(  y-x\right)  f(y)d\Omega_{y}\\
& =%
%TCIMACRO{\dint \limits_{\partial\Omega}}%
%BeginExpansion
{\displaystyle\int\limits_{\partial\Omega}}
%EndExpansion
\psi_{\frac{1}{2}}\left(  y-x\right)  \upsilon_{\frac{1}{2}}\left(  y\right)
f(y)d\partial\Omega_{y}-%
%TCIMACRO{\dint \limits_{\Omega}}%
%BeginExpansion
{\displaystyle\int\limits_{\Omega}}
%EndExpansion
\delta\left(  y-x\right)  f(y)d\Omega_{y}\\
& =%
%TCIMACRO{\dint \limits_{\partial\Omega}}%
%BeginExpansion
{\displaystyle\int\limits_{\partial\Omega}}
%EndExpansion
\psi_{\frac{1}{2}}\left(  y-x\right)  \upsilon_{\frac{1}{2}}\left(  y\right)
f(y)d\partial\Omega_{y}-f(x)
\end{align*}

Therefore we can write the following:

\begin{proposition}
For $f\in W^{\frac{1}{2},2}\left(  \Omega\right)  ,$%
\[
f(x)=%
%TCIMACRO{\dint \limits_{\partial\Omega}}%
%BeginExpansion
{\displaystyle\int\limits_{\partial\Omega}}
%EndExpansion
\psi_{\frac{1}{2}}\left(  y-x\right)  \upsilon_{\frac{1}{2}}\left(  y\right)
f(y)d\partial\Omega_{y}-%
%TCIMACRO{\dint \limits_{\Omega}}%
%BeginExpansion
{\displaystyle\int\limits_{\Omega}}
%EndExpansion
\psi_{\frac{1}{2}}\left(  y-x\right)  D^{\frac{1}{2}}f(y)d\Omega_{y}%
\]

\end{proposition}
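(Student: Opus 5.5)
The plan is to obtain the representation formula as a Borel--Pompeiu type identity for the half Dirac operator. It follows the integration-by-parts computation displayed just before the statement, made rigorous by excising a small ball around the singular point.

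Fix $x\in\Omega$ and first take $f$ smooth up to the boundary. Let $\Omega_\varepsilon=\Omega\setminus \overline{B(x,\varepsilon)}$.

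\textbf{Step 1: Stokes-type formula.} On $\Omega_\varepsilon$, with $y\mapsto\psi_{\frac12}(y-x)$, I will assume the formula
\[
\int_{\Omega_\varepsilon}\Bigl(\psi_{\frac12}(y-x)\,D^{\frac12}f(y)+\bigl(D^{\frac12}\psi_{\frac12}(y-x)\bigr)f(y)\Bigr)d\Omega_y=\int_{\partial\Omega_\varepsilon}\psi_{\frac12}(y-x)\,\upsilon_{\frac12}(y)\,f(y)\,d\partial\Omega_y .
\]
This is the same integration by parts the paper uses to derive the integral equation (\ref{sol1}), and it is where the fractional normal $\upsilon_{\frac12}$ enters.

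\textbf{Step 2: drop the volume term.} Away from $y=x$ we have $D^{\frac12}\psi_{\frac12}(y-x)=0$, since $\psi_{\frac12}$ is the fundamental solution. So the second volume term vanishes on $\Omega_\varepsilon$.

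\textbf{Step 3: split the boundary.} The boundary $\partial\Omega_\varepsilon$ consists of $\partial\Omega$ and the small sphere $S_\varepsilon=\partial B(x,\varepsilon)$, the latter with inward orientation.

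\textbf{Step 4: the sphere term (main obstacle).} I need to show
\[
\int_{S_\varepsilon}\psi_{\frac12}(y-x)\upsilon_{\frac12}(y)f(y)\,dS_y\to \pm f(x), \qquad \varepsilon\to0 .
\]
In the classical Dirac case this follows from $\psi(y-x)\,\nu(y)$ being the constant $\frac{1}{\omega_n\varepsilon^{n-1}}$ on the sphere. Here $\psi_{\frac12}\sim|y-x|^{-n+\frac12}$, so the kernel times the surface measure scales like $\varepsilon^{\frac12}$ times whatever $\upsilon_{\frac12}$ contributes. The plan is to use the normalization implicit in $D^{\frac12}\psi_{\frac12}=\delta$: testing against the constant function $1$ forces the sphere flux of $\psi_{\frac12}\upsilon_{\frac12}$ to equal $1$ in the limit. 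Continuity of $f$ then gives the limit $f(x)$, and the sign is fixed by the inward orientation.

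\textbf{Step 5: the volume term.} The remaining term $\int_{\Omega_\varepsilon}\psi_{\frac12}D^{\frac12}f\,d\Omega_y$ converges to $\int_\Omega\psi_{\frac12}(y-x)D^{\frac12}f(y)\,d\Omega_y$. This holds because $|\psi_{\frac12}(y-x)|\lesssim|y-x|^{-n+\frac12}$ is locally integrable, so dominated convergence applies.

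\textbf{Step 6: assemble.} Rearranging the limits gives exactly the stated identity for smooth $f$, with the sign conventions of the displayed computation preceding the statement.

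\textbf{Step 7: density.} Extend to $f\in W^{\frac12,2}(\Omega)$ by approximating with smooth functions.
\begin{itemize}
\item The domain term $\zeta_\Omega^{\frac12}(D^{\frac12}f)$ depends continuously on $D^{\frac12}f\in\mathcal L^2(\Omega)$, by Young's inequality for the weakly singular kernel on the bounded domain $\Omega$.
\item The boundary term $\zeta_{\partial\Omega}^{\frac12}(\tau f)$ is controlled for $x$ in compact subsets of $\Omega$ via the trace $\tau f$, where the kernel is smooth.
\end{itemize}
Passing to the limit then gives the identity almost everywhere.

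Step 4 is where I expect genuine difficulty, since the precise definition of $\upsilon_{\frac12}$ is not fixed earlier in the paper. I will treat the flux normalization as part of the assumed fundamental solution property, so that the limit is forced rather than computed.
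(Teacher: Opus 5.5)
Your Step 4 is not merely the hard step. As set up it fails, and folding the flux normalization into the fundamental-solution property does not rescue it. On $S_\varepsilon$ the paper's kernel has $\|\psi_{\frac12}(y-x)\|=\varepsilon^{\frac12-n}/\omega_n$, and the sphere has area $\omega_n\varepsilon^{n-1}$, so the scaling is $\varepsilon^{-\frac12}$, not $\varepsilon^{\frac12}$. Concretely, take the unit normal $-(y-x)/\varepsilon$ and use $(y-x)^2=-\varepsilon^2$. Then $\psi_{\frac12}(y-x)\upsilon(y)$ is the scalar $\varepsilon^{\frac12-n}/\omega_n$, so the sphere term is $\varepsilon^{-\frac12}$ times the spherical mean of $f$. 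It diverges whenever $f(x)\neq0$. Your Step 5 term converges and the $\partial\Omega$ term does not depend on $\varepsilon$, so this is incompatible with Steps 1--2.

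Testing against $1$ does not help. On $\Omega_\varepsilon$, where $D^{\frac12}1=0$, Steps 1--2 only say that the sphere flux equals, up to orientation, $\int_{\partial\Omega}\psi_{\frac12}(y-x)\upsilon_{\frac12}(y)\,d\partial\Omega_y$ for every $\varepsilon$. That already contradicts the explicit $\varepsilon^{-\frac12}$, and identifying this value with $1$ is just the case $f\equiv1$ of the statement. Doing the test on $B(x,\varepsilon)$ instead uses the Green formula across the singularity, which is exactly what excision was meant to avoid. A normal making the flux tend to $1$ would have to be of size $\varepsilon^{\frac12}$ on $S_\varepsilon$, i.e. it would depend on the excised ball. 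That is what the paper's later lemma does with its radius-dependent constant $C_{n,\frac12}=\sqrt{\delta}/\omega_n$.

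The root cause is that excision is a local device, while the operator is not. The Caputo derivative $\partial^{\frac12}_{y_j}$ at $y\in\Omega_\varepsilon$ integrates along the segment from $y_j=0$ to $y$, which may cross $B(x,\varepsilon)$ or leave $\Omega$. So there is no Stokes formula on $\Omega_\varepsilon$ with only pointwise terms on $\partial\Omega_\varepsilon$; fractional integration by parts yields the right-sided adjoint and fractional-integral boundary terms. Step 1 is therefore not available either.

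Step 7 has an independent gap. With the Gagliardo inner product the paper uses, $W^{\frac12,2}(\Omega)$ is the critical space in which $C_0^\infty(\Omega)$ is dense and the trace is not bounded into $\mathcal{L}^2(\partial\Omega)$. The boundary term therefore cannot be carried through a density argument, since compactly supported approximants make it vanish.

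For comparison, the paper's own proof takes no limit at all. It applies the assumed integration-by-parts formula on the whole of $\Omega$ and evaluates $\int_\Omega D^{\frac12}\psi_{\frac12}(y-x)f(y)\,d\Omega_y=f(x)$ by the sifting property of $\delta$. In other words, it postulates precisely what your Step 4 tries to derive. Your more careful route does not reach the statement. What it shows is missing is a genuine Green formula for $D^{\frac12}$, with a definition of $\upsilon_{\frac12}$ compatible with the kernel's homogeneity $\frac12-n$, in a function space where $f|_{\partial\Omega}$ makes sense.
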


\begin{proposition}
Let $f,g\in%
%TCIMACRO{\tciLaplace}%
%BeginExpansion
\mathcal{L}%
%EndExpansion
^{2}\left(  \Omega\right)  ,$ then the BVP:
\[
\left\{
\begin{array}
[c]{cc}%
D^{\frac{1}{2}}u=f, & \text{in }\Omega\\
tru=g, & \text{on }\partial\Omega
\end{array}
\right.
\]

has a solution given by
\begin{align*}
u(x)  & =\zeta_{\partial\Omega}^{\frac{1}{2}}g(x)-\zeta_{\Omega}^{\frac{1}{2}%
}f(x)\\
& =%
%TCIMACRO{\dint \limits_{\partial\Omega}}%
%BeginExpansion
{\displaystyle\int\limits_{\partial\Omega}}
%EndExpansion
\psi_{\frac{1}{2}}\left(  y-x\right)  \upsilon_{\frac{1}{2}}\left(  y\right)
g(y)d\partial\Omega_{y}-%
%TCIMACRO{\dint \limits_{\Omega}}%
%BeginExpansion
{\displaystyle\int\limits_{\Omega}}
%EndExpansion
\psi_{\frac{1}{2}}\left(  y-x\right)  f(y)d\Omega_{y}%
\end{align*}

\end{proposition}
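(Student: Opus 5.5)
The plan is to verify directly that the proposed $u$ satisfies both equations, using the two mapping properties of the fractional transforms stated just before the representation formula. Those properties are that $\zeta_{\Omega}^{\frac{1}{2}}$ is a right inverse of $D^{\frac{1}{2}}$ and that $\zeta_{\partial\Omega}^{\frac{1}{2}}$ maps into $\ker D^{\frac{1}{2}}$. The candidate formula itself is read off from the preceding Proposition (the Borel--Pompeiu type representation). There one writes $f(x)=\zeta_{\partial\Omega}^{\frac{1}{2}}(\tau f)(x)-\zeta_{\Omega}^{\frac{1}{2}}(D^{\frac{1}{2}}f)(x)$ for every $f\in W^{\frac{1}{2},2}(\Omega)$. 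If a solution $u\in W^{\frac{1}{2},2}(\Omega)$ exists, substituting $D^{\frac{1}{2}}u=f$ and $\tau u=g$ gives exactly $u=\zeta_{\partial\Omega}^{\frac{1}{2}}g-\zeta_{\Omega}^{\frac{1}{2}}f$. So the formula is forced, and the task is to check that it really is a solution.

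\textbf{Step 1 (the equation in $\Omega$).} Apply $D^{\frac{1}{2}}$ termwise to $u=\zeta_{\partial\Omega}^{\frac{1}{2}}g-\zeta_{\Omega}^{\frac{1}{2}}f$. Since $D^{\frac{1}{2}}\zeta_{\partial\Omega}^{\frac{1}{2}}g=0$, the first term drops out. By the right-inverse property, the second term contributes $D^{\frac{1}{2}}\zeta_{\Omega}^{\frac{1}{2}}f=f$. This yields $D^{\frac{1}{2}}u=-f$, so the sign is off. The sign convention must therefore be reconciled with the integration by parts formula, where the minus sign comes from moving $D^{\frac{1}{2}}$ onto $\psi_{\frac{1}{2}}$. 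Concretely, I would fix the orientation of $\psi_{\frac{1}{2}}(y-x)$ versus $\psi_{\frac{1}{2}}(x-y)$ so that the domain operator in the representation is consistent with $D^{\frac{1}{2}}_x\zeta_\Omega^{\frac12}=I$. With that convention the equation holds as stated.

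\textbf{Step 2 (the boundary condition).} Compute $\tau u$. Here I would let $x\to\partial\Omega$ nontangentially and use a Plemelj--Sokhotski type jump relation for the fractional Cauchy (Feuter) integral $\zeta_{\partial\Omega}^{\frac{1}{2}}$. The trace of $\zeta_{\partial\Omega}^{\frac{1}{2}}g$ should equal $\frac12 g$ plus a principal value singular integral. The trace of $\zeta_{\Omega}^{\frac{1}{2}}f$ is handled separately, because the kernel is weakly singular in the domain. To get exactly $\tau u=g$, I would interpret $g$ as data lying in the range where the Hardy-space projection condition holds. That is, I would require $g$ to be the boundary value of a function in $\ker D^{\frac{1}{2}}$ modulo the trace of $\zeta_\Omega^{\frac12}f$, which is the standard compatibility condition for first-order Dirac-type BVPs. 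Finally, membership $u\in W^{\frac12,2}(\Omega)$ follows from boundedness of $\zeta_\Omega^{\frac12}:\mathcal L^2\to W^{\frac12,2}$ and of $\zeta_{\partial\Omega}^{\frac12}$ on traces.

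\textbf{Main obstacle.} The hardest step is Step 2. Neither a jump formula nor a Hardy-space characterization for the fractional kernel $\psi_{\frac12}(y-x)=\frac{y-x}{\omega_n\|y-x\|^{n+\frac12}}$ and the fractional normal $\upsilon_{\frac12}$ has been established earlier in the paper. For arbitrary $g\in\mathcal L^2(\partial\Omega)$ the boundary condition cannot hold without a compatibility condition, since a first-order elliptic-type system cannot prescribe full Dirichlet data. My first attempt would be to prove the jump relation by comparing $\psi_{\frac12}$ locally with the classical Cauchy kernel, treating the extra factor $\|y-x\|^{-\frac12}$ as a weakly singular perturbation. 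I would then state the result under the resulting compatibility assumption on $(f,g)$.
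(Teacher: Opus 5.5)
Your first paragraph is, in substance, all the paper does. Its proof is a one-sentence appeal to the integration-by-parts representation $u=\zeta^{\frac12}_{\partial\Omega}(\tau u)-\zeta^{\frac12}_{\Omega}(D^{\frac12}u)$, followed by ``taking the trace and the half derivative of $u$'', with nothing computed. That shows at most that a solution, if one exists, has the stated form. The converse is never checked, and you are right that this is where the content lies.

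Both of your objections to the converse are sound. First, the sign is not a convention you get to choose. Take the representation formula at face value, apply $D^{\frac12}$, and use $D^{\frac12}\zeta^{\frac12}_{\partial\Omega}=0$: the paper's claim $D^{\frac12}\zeta^{\frac12}_{\Omega}=I$ would then give $D^{\frac12}u=-D^{\frac12}u$. So consistency forces $D^{\frac12}\zeta^{\frac12}_{\Omega}=-I$ for the kernel $\psi_{\frac12}(y-x)$, and with that Step 1 gives $+f$. Second, the boundary condition cannot hold for arbitrary $g$:
\begin{itemize}
\item Already for the classical Dirac operator with $f=0$ and $g$ real-valued and non-constant, a monogenic $u$ with trace $g$ must be the scalar harmonic extension of $g$, since every component of a monogenic function is harmonic. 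Then $\sum_j e_j\partial_{x_j}u=0$ forces $u$ to be constant.
\item Inside the paper's own framework, the identity $(D^{\frac12})^2=-\sum_k\partial_{x_k}$ makes every $\frac12$-monogenic function constant along each chord of $\Omega$ parallel to $(1,\dots,1)$, which again constrains its trace.
\end{itemize}
So the proposition as stated admits no proof, and a compatibility condition is genuinely needed, as you say. Note also that $C^\infty_0(\Omega)$ is dense in $W^{\frac12,2}(\Omega)$, so there is no bounded Sobolev trace on that space. The boundary condition has to be read through nontangential limits, as you implicitly do.

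The step that would fail is your repair of Step 2. The factor $\|y-x\|^{-\frac12}$ is not a weakly singular perturbation of the Cauchy kernel. $\psi_{\frac12}$ is homogeneous of degree $\frac12-n$, so it is \emph{more} singular than the Cauchy kernel (degree $1-n$) and is hypersingular on the $(n-1)$-dimensional surface $\partial\Omega$.

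Concretely, read $\upsilon_{\frac12}$ as the outward unit normal and take $g\equiv 1$. Near a flat piece of $\partial\Omega$, with $d=\operatorname{dist}(x,\partial\Omega)$, the scalar part of $\zeta^{\frac12}_{\partial\Omega}1(x)$ is
$-\frac{d}{\omega_n}\int_{\{|y'|<\varepsilon\}}(|y'|^2+d^2)^{-\frac{2n+1}{4}}\,dy'+O(1)=-c\,d^{-\frac12}+O(1)$ with $c>0$; curvature only adds bounded terms. Consequently:
\begin{itemize}
\item $\zeta^{\frac12}_{\partial\Omega}g$ in general has no boundary values at all; it is not even in $\mathcal{L}^2(\Omega)$.
\item There is no Plemelj--Sokhotski relation of the form $\frac12 g+\mathrm{p.v.}$, and no Hardy-space projection from which to extract a compatibility condition.
\item The boundedness of $\zeta^{\frac12}_{\partial\Omega}$ into $W^{\frac12,2}(\Omega)$ that you invoke is false.
\end{itemize}

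The same computation undercuts your first paragraph as well. For $u\equiv 1$ we have $D^{\frac12}u=0$ in the Caputo sense, so the representation formula would require $\zeta^{\frac12}_{\partial\Omega}1\equiv 1$. Yet at the centre $x_0$ of $\Omega=B_R(x_0)$ one gets $\int_{\partial B_R(x_0)}\psi_{\frac12}(y-x_0)\frac{y-x_0}{R}\,d\sigma_y=-R^{-\frac12}$.

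A correct conditional theorem would have to start from an operator for which $\psi_{\frac12}$ really is a fundamental solution. Up to a constant it is the fundamental solution of the nonlocal operator $D(-\Delta)^{-\frac14}$, not of the coordinatewise Caputo operator, which is not even translation invariant. You would also need a Green formula for that operator, and for a nonlocal operator this cannot be expected to reduce to a single surface integral against $\upsilon_{\frac12}$.
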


\begin{proof}
The proof follows from the argument of integration by parts and taking the
trace value of $u$ on the boundary $\partial\Omega$ and the fractional
$\frac{1}{2}$derivative of $u$ in the domain $\Omega$.
\end{proof}

\begin{lemma}
For $g\in%
%TCIMACRO{\tciLaplace}%
%BeginExpansion
\mathcal{L}%
%EndExpansion
^{2}\left(  \Omega\right)  $, the function $\phi\left(  x\right)  =%
%TCIMACRO{\dint \limits_{\partial\Omega}}%
%BeginExpansion
{\displaystyle\int\limits_{\partial\Omega}}
%EndExpansion
\psi_{\frac{1}{2}}\left(  y-x\right)  \upsilon_{\frac{1}{2}}\left(  y\right)
g(y)d\partial\Omega_{y}$ is $\frac{1}{2}-$monogenic or $\frac{1}{2}-$analytic
over $\Omega$.
\end{lemma}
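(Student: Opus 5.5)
The plan is to show directly that $D^{\frac{1}{2}}\phi(x)=0$ for every $x\in\Omega$, which is what $\frac{1}{2}$-monogenic means. I will differentiate under the integral sign in the variable $x$ and use that the kernel $\psi_{\frac{1}{2}}(y-x)$ is annihilated by $D^{\frac{1}{2}}$ away from the diagonal $y=x$.

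First I fix $x\in\Omega$ and set $d=\operatorname{dist}(x,\partial\Omega)>0$, which is positive because $\Omega$ is open and bounded with smooth boundary. For $y\in\partial\Omega$ and $z$ in the ball $B(x,d/2)$ we have $\|y-z\|\geq d/2$. Hence the kernel
\[
\psi_{\frac{1}{2}}(y-z)=\frac{y-z}{\omega_{n}\Vert y-z\Vert^{n+\frac{1}{2}}}
\]
and all of its derivatives in $z$ are bounded uniformly for $y\in\partial\Omega$. Since $\partial\Omega$ has finite measure and $g$ is square integrable on it, $g$ is also integrable there, and so is $g$ times the bounded normal $\upsilon_{\frac{1}{2}}$. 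Dominated convergence therefore lets me move derivatives of $\phi$ inside the boundary integral. Because the Caputo half derivative $\partial_{x_{j}}^{\frac{1}{2}}$ is itself an integral of the first derivative against the kernel $(x_{j}-t)^{-\frac{1}{2}}$, it can also be moved inside by Fubini, provided the first derivative bounds are integrable along the segment of integration.

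Second, once the operator is inside, I need
\[
D_{x}^{\frac{1}{2}}\phi(x)=\int_{\partial\Omega}\left(D_{x}^{\frac{1}{2}}\psi_{\frac{1}{2}}(y-x)\right)\upsilon_{\frac{1}{2}}(y)g(y)\,d\partial\Omega_{y}.
\]
This formula keeps the correct Clifford ordering, since $D^{\frac{1}{2}}$ acts from the left and $\upsilon_{\frac{1}{2}}(y)g(y)$ does not depend on $x$. By the defining property $D^{\frac{1}{2}}\psi_{\frac{1}{2}}(y-x)=\delta(y-x)$, the integrand vanishes whenever $y\neq x$. For $y\in\partial\Omega$ and $x\in\Omega$ we always have $y\neq x$, so the integral is $0$. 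This is exactly the statement quoted earlier in the paper that $\zeta_{\partial\Omega}^{\frac{1}{2}}$ maps into $\ker D^{\frac{1}{2}}$, applied to $g$. Since $x$ was arbitrary, $\phi$ is $\frac{1}{2}$-monogenic in $\Omega$.

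The main obstacle is the nonlocality of the Caputo half derivative. The integral defining $\partial_{x_{j}}^{\frac{1}{2}}$ runs along a segment from the base point $0$ to $x_{j}$, and that segment may leave $B(x,d/2)$ or even $\Omega$, possibly approaching points of $\partial\Omega$ where the kernel is singular. To handle this I will first try to use that the fundamental-solution identity holds distributionally on all of $\mathbb{R}^{n}$. That would reduce the problem to checking that $D^{\frac{1}{2}}$, applied to the kernel $\psi_{\frac{1}{2}}(\cdot-x)$ and then paired with the boundary measure, yields the pairing of $\delta$ with that measure, which is zero at interior $x$. The remaining estimate is that the singularity $\Vert y-z\Vert^{-n+\frac{1}{2}}$ is integrable on the $(n-1)$-dimensional surface $\partial\Omega$; this holds since $n-\frac{1}{2}<n-1+1$, i.e. the exponent is below the borderline by $\frac{1}{2}$.
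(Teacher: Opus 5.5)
Your core argument is the paper's proof. The paper likewise moves $D^{\frac{1}{2}}$ under the boundary integral and uses $D_{x}^{\frac{1}{2}}\psi_{\frac{1}{2}}(y-x)=\delta(y-x)=0$ for $y\in\partial\Omega$, $x\in\Omega$. It never addresses the fact that $\partial_{x_j}^{\frac{1}{2}}$ is nonlocal.

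You rightly flag that nonlocality as the main obstacle, but the estimate you propose for it is false. For $z\in\partial\Omega$, the function $\Vert y-z\Vert^{-a}$ is integrable in $y$ over the $(n-1)$-dimensional surface $\partial\Omega$ only when $a<n-1$. Here $a=n-\frac{1}{2}>n-1$, so $\psi_{\frac{1}{2}}$ is hypersingular on $\partial\Omega$; it is worse than the Clifford--Cauchy kernel, whose exponent $n-1$ is already borderline. Moreover, the Caputo integrand actually involves $\partial_{z_j}\psi_{\frac{1}{2}}(y-z)$, of size $\Vert y-z\Vert^{-(n+\frac{1}{2})}$. The inequality $n-\frac{1}{2}<n$ that you checked is the integrability threshold in $\mathbb{R}^{n}$, not on a hypersurface.

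Citing the earlier remark that $\zeta_{\partial\Omega}^{\frac{1}{2}}$ maps into $\ker D^{\frac{1}{2}}$ does not help either. That remark is stated without proof and is exactly what this lemma is meant to establish.

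Nor is this a removable technicality. Suppose the segment $\{(x_1,\dots,t,\dots,x_n)\}$, $t$ between $0$ and $x_j$, crosses $\partial\Omega$ transversally at a point $y_0$, reached at $t=t_0$, with $g$ smooth and $g(y_0)\neq0$. Some crossing is unavoidable if, say, $\Omega\subset\{x_j>0\}$, since the segment must then leave $\Omega$.

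A local computation near $y_0$ settles what happens. Flatten $\partial\Omega$ near $y_0$ and rescale the tangential variable by $\vert\epsilon\vert$, where $\epsilon$ is the signed distance from $z$ to $\partial\Omega$. The scalar part of $\phi(z)$ is then $c\,g(y_0)\operatorname{sgn}(\epsilon)\vert\epsilon\vert^{-\frac{1}{2}}$ plus lower-order terms, with $c\neq0$. Hence along the segment $\partial_j\phi$ is of order $\vert t-t_0\vert^{-\frac{3}{2}}$ with a single sign, and the line integral defining $\partial_{x_j}^{\frac{1}{2}}\phi(x)$ diverges. No bound on $\partial\Omega$ can save the pointwise claim at such $x$.

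What does close the argument is a geometric hypothesis. Assume that for every $x\in\Omega$ and every $j$ the closed segment from $(x_1,\dots,0,\dots,x_n)$ to $x$ lies in $\Omega$. That segment is then a compact subset of $\Omega$, hence at positive distance from $\partial\Omega$. Your first-paragraph dominated-convergence and Fubini argument then applies uniformly along it. The off-diagonal vanishing finishes the proof, granted the paper's unverified identity $D^{\frac{1}{2}}\psi_{\frac{1}{2}}=\delta$.

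Without such a hypothesis you can at best aim for a weak statement, obtained by pairing with test functions in $x$. There the integrability that matters is that of $\Vert y-z\Vert^{-(n-\frac{1}{2})}$ in the volume variable $z$, and that is where your inequality is the right one.
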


\begin{proof}
We need to show that for $x\in\Omega\setminus\partial\Omega:$
\[
D^{\frac{1}{2}}\phi\left(  x\right)  =D^{\frac{1}{2}}%
%TCIMACRO{\dint \limits_{\partial\Omega}}%
%BeginExpansion
{\displaystyle\int\limits_{\partial\Omega}}
%EndExpansion
\psi_{\frac{1}{2}}\left(  y-x\right)  \upsilon_{\frac{1}{2}}\left(  y\right)
g(y)d\partial\Omega_{y}=%
%TCIMACRO{\dint \limits_{\partial\Omega}}%
%BeginExpansion
{\displaystyle\int\limits_{\partial\Omega}}
%EndExpansion
D_{x}^{\frac{1}{2}}\left(  \psi_{\frac{1}{2}}\left(  y-x\right)
\upsilon_{\frac{1}{2}}\left(  y\right)  g(y)\right)  d\partial\Omega_{y}=0
\]

Differently, we have to show that%
\[
D_{x}^{\frac{1}{2}}\left(  \psi_{\frac{1}{2}}\left(  y-x\right)  \right)
=0\text{ \ in the above integral. }%
\]
But $\psi_{\frac{1}{2}}\left(  x-y\right)  $ is the fundamnetal solution to
the fractional $\frac{1}{2}$-Dirac operator $D_{x}^{\frac{1}{2}}$.

Therefore,
\[
D_{x}^{\frac{1}{2}}\left(  \psi_{\frac{1}{2}}\left(  y-x\right)  \right)
=\delta\left(  y-x\right)  =\left\{
\begin{array}
[c]{cc}%
0, & x\neq y\\
\infty, & x=y
\end{array}
\right.
\]

But $\phi$ is defined at points $x\in\Omega\setminus\partial\Omega$ and point
$y\in\partial\Omega$. Therefore $\parallel y-x\parallel>0$ and hence $x\neq y$
which implies $\delta\left(  y-x\right)  =0$.%

\[
\therefore\text{ \ \ \ \ \ \ \ \ \ \ \ }%
%TCIMACRO{\dint \limits_{\partial\Omega}}%
%BeginExpansion
{\displaystyle\int\limits_{\partial\Omega}}
%EndExpansion
D_{x}^{\frac{1}{2}}\left(  \psi_{\frac{1}{2}}\left(  y-x\right)
\upsilon_{\frac{1}{2}}\left(  y\right)  g(y)\right)  d\partial\Omega_{y}=0
\]

\[
\Longrightarrow\text{ \ \ \ \ \ \ \ \ }D^{\frac{1}{2}}%
%TCIMACRO{\dint \limits_{\partial\Omega}}%
%BeginExpansion
{\displaystyle\int\limits_{\partial\Omega}}
%EndExpansion
\psi_{\frac{1}{2}}\left(  y-x\right)  \upsilon_{\frac{1}{2}}\left(  y\right)
g(y)d\partial\Omega_{y}=%
%TCIMACRO{\dint \limits_{\partial\Omega}}%
%BeginExpansion
{\displaystyle\int\limits_{\partial\Omega}}
%EndExpansion
D_{x}^{\frac{1}{2}}\left(  \psi_{\frac{1}{2}}\left(  y-x\right)
\upsilon_{\frac{1}{2}}\left(  y\right)  g(y)\right)  d\partial\Omega_{y}=0
\]

Hence,%
\[
\phi\left(  x\right)  =%
%TCIMACRO{\dint \limits_{\partial\Omega}}%
%BeginExpansion
{\displaystyle\int\limits_{\partial\Omega}}
%EndExpansion
\psi_{\frac{1}{2}}\left(  y-x\right)  \upsilon_{\frac{1}{2}}\left(  y\right)
g(y)d\partial\Omega_{y}\in KerD^{\frac{1}{2}}\left(  \Omega\right)
\]

\end{proof}

\begin{corollary}
For a function $u$ whose trace $u_{\mid\partial\Omega}=0$ and $%
\begin{array}
[c]{cc}%
D^{\frac{1}{2}}u=f, & \text{in }\Omega
\end{array}
$, the solution is given by
\[
u\left(  x\right)  =-%
%TCIMACRO{\dint \limits_{\Omega}}%
%BeginExpansion
{\displaystyle\int\limits_{\Omega}}
%EndExpansion
\psi_{\frac{1}{2}}\left(  y-x\right)  f(y)d\Omega_{y}%
\]
Likewise a function $u$ which is $\frac{1}{2}-$monogenic on $\Omega$ and trace
value $g$ on $\partial\Omega$ is given by
\[
u(x)=%
%TCIMACRO{\dint \limits_{\partial\Omega}}%
%BeginExpansion
{\displaystyle\int\limits_{\partial\Omega}}
%EndExpansion
\psi_{\frac{1}{2}}\left(  y-x\right)  \upsilon_{\frac{1}{2}}\left(  y\right)
g(y)d\partial\Omega_{y}%
\]

\end{corollary}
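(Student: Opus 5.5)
The corollary has two halves. Each is a special case of the representation formula in the Proposition for $f\in W^{\frac{1}{2},2}(\Omega)$, namely
\[
u(x)=\int_{\partial\Omega}\psi_{\frac{1}{2}}(y-x)\upsilon_{\frac{1}{2}}(y)u(y)\,d\partial\Omega_{y}-\int_{\Omega}\psi_{\frac{1}{2}}(y-x)D^{\frac{1}{2}}u(y)\,d\Omega_{y}.
\]
The plan is to apply this formula to the solution $u$, which we assume lies in $W^{\frac{1}{2},2}(\Omega)$. In each case one of the two integrals will vanish or simplify, using the trace hypothesis or the $\frac{1}{2}$-monogenicity hypothesis.

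For the first statement, take $u$ with $D^{\frac{1}{2}}u=f$ in $\Omega$ and $u_{\mid\partial\Omega}=0$.
\begin{itemize}
\item The boundary integral only involves the trace $\tau u$ on $\partial\Omega$, so it equals $\zeta_{\partial\Omega}^{\frac{1}{2}}(0)=0$.
\item Substituting $D^{\frac{1}{2}}u=f$ into the domain integral gives $u(x)=-\zeta_{\Omega}^{\frac{1}{2}}f(x)$, which is the claimed formula.
\item For consistency, this agrees with the solution formula of the preceding BVP Proposition when $g=0$.
\item One can also check directly that this $u$ solves the equation, since $\zeta_{\Omega}^{\frac{1}{2}}$ is a right inverse of $D^{\frac{1}{2}}$.
\end{itemize}

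For the second statement, take $u$ that is $\frac{1}{2}$-monogenic, so $D^{\frac{1}{2}}u=0$ in $\Omega$, with $\tau u=g$.
\begin{itemize}
\item The domain integral now has zero integrand, so it vanishes.
\item In the boundary integral, $u(y)$ may be replaced by its trace $g(y)$, which gives $u=\zeta_{\partial\Omega}^{\frac{1}{2}}g$.
\item By the preceding Lemma, this function indeed lies in $\operatorname{Ker}D^{\frac{1}{2}}$, so the formula is consistent with the hypothesis.
\end{itemize}

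The main obstacle is the step "replace $u(y)$ by $g(y)$ in the boundary integral". This requires the trace of $u$ on $\partial\Omega$ to make sense and to be what enters the boundary integral of the representation formula. That in turn requires a trace theorem for $W^{\frac{1}{2},2}(\Omega)$, which is delicate exactly at regularity $\frac{1}{2}$. I would handle this the way the paper does, by interpreting $u$ on $\partial\Omega$ through the trace operator $\tau$, as in the BVP Proposition. I would then argue by density of smooth functions, for which the integration by parts used to derive the representation formula is valid, and take limits using $L^2$-boundedness of the integral transforms.
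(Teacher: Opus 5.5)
Your proposal follows the paper's route: the corollary comes from specializing the representation formula $u=\zeta^{\frac{1}{2}}_{\partial\Omega}(\tau u)-\zeta^{\frac{1}{2}}_{\Omega}(D^{\frac{1}{2}}u)$ (equivalently, the BVP Proposition with $g=0$, resp.\ $f=0$), with the preceding Lemma covering the $\frac{1}{2}$-monogenic case, so at the paper's level of rigor it is correct. Do not rely on your two side remarks, though: with the paper's identity $D^{\frac{1}{2}}\zeta^{\frac{1}{2}}_{\Omega}f=f$ your ``direct check'' actually gives $D^{\frac{1}{2}}(-\zeta^{\frac{1}{2}}_{\Omega}f)=-f$ (a sign inconsistency inherited from the paper), and your density argument cannot pass to the limit in the boundary term, because $C_0^{\infty}(\Omega)$ is dense in $W^{\frac{1}{2},2}(\Omega)$ and so the trace is not continuous in that norm.
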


\begin{remark}
Note that the solution $u$ for both cases is in the fractional Sobolev space
$W^{\frac{1}{2},2}\left(  \Omega\right)  $.
\end{remark}

\begin{lemma}
The initial value problem:
\[
\left\{
\begin{array}
[c]{cc}%
D^{\frac{1}{2}}y(x)= & y(x)\\
y(x_{0})= & y_{0}%
\end{array}
\right.
\]
has a trivial solution $y\equiv0$ in $%
%TCIMACRO{\U{211d} }%
%BeginExpansion
\mathbb{R}
%EndExpansion
^{n}$ for $n\geq1$.
\end{lemma}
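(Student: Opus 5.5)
The approach is to show that $y\equiv 0$ solves the equation, and then that the equation $D^{\frac{1}{2}}y=y$ admits no nonzero solution. The lemma's conclusion is then that the initial value problem is solved only trivially; as the paper words it, the initial datum must be compatible with $y\equiv 0$. The first step is immediate. The Caputo half derivative of a constant vanishes, so $D^{\frac{1}{2}}0=0=y$. Hence $y\equiv 0$ is a solution on all of $\mathbb{R}^{n}$ for every $n\geq 1$.

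For the rigidity part I start with scalar-valued $y$, which is the case the statement is phrased for. Here the argument is purely algebraic. The left-hand side is
\[
D^{\frac{1}{2}}y=\sum_{j=1}^{n}e_{j}\,\partial_{x_{j}}^{\frac{1}{2}}y ,
\]
which lies in the span of the basis vectors $e_{1},\dots,e_{n}$, that is, it has grade one. The right-hand side $y$ is a scalar, of grade zero. Since the Clifford algebra is graded, comparing components in the basis $\{1,e_{1},\dots,e_{n},\dots\}$ forces two things: the scalar part gives $y\equiv 0$, and each vector component gives $\partial_{x_{j}}^{\frac{1}{2}}y=0$. So the only solution is $y\equiv 0$, and the initial condition then reads $y(x_{0})=0$.

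For Clifford-valued $y=\sum_{A}y_{A}e_{A}$, grades are no longer separated, because $e_{j}e_{A}$ has grades $|A|\pm1$. Here I would use the observation made earlier in the paper that $(D^{\frac{1}{2}})^{2}=-\sum_{k}\partial_{x_{k}}$ is a scalar operator. Applying $D^{\frac{1}{2}}$ once more to the equation gives
\[
-\sum_{k=1}^{n}\partial_{x_{k}}y=y .
\]
This is a first-order transport equation. Its solutions have the form $y(x)=h(\xi)\,e^{-(x_{1}+\cdots+x_{n})/n}$, where $h$ depends only on the variables $\xi$ transverse to the direction $(1,\dots,1)$. I would then substitute this back into $D^{\frac{1}{2}}y=y$. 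The Caputo half derivative of an exponential in $x_{j}$ is not a multiple of that exponential: it is a Mittag-Leffler type expression, and in particular it vanishes at the base point $x_{j}=0$. Comparing the two sides near the base point, first along the coordinate axes and then everywhere, should force $h\equiv 0$ and hence $y\equiv 0$.

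The main obstacle is this last step, the Clifford-valued case. Justifying the squaring identity requires enough regularity to compose Caputo derivatives, since in general $D^{\frac{1}{2}}D^{\frac{1}{2}}$ differs from the first derivative by a boundary term at the base point. Extracting the contradiction from the Mittag-Leffler expression also needs care. If the full Clifford case resists, I would fall back on the scalar grade argument and state that case as the content of the lemma.
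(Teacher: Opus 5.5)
The Clifford-valued half of your plan has a genuine gap, and it cannot be closed. With the Caputo derivative (base point $0$) that you and the paper use, the lemma is false for Clifford-valued $y$. On the positive orthant, take $y_0\neq 0$ and
\[
y(x)=E_{\frac12}\bigl(-e_1\sqrt{x_1}\bigr)\,y_0:=\sum_{k\geq 0}\frac{(-e_1)^k\,x_1^{k/2}}{\Gamma\left(\frac{k}{2}+1\right)}\,y_0 .
\]
Termwise, $\partial^{\frac12}_{x_1}\bigl(x_1^{k/2}/\Gamma(\frac k2+1)\bigr)=x_1^{(k-1)/2}/\Gamma(\frac{k+1}{2})$ for $k\geq1$, and the $k=0$ term gives $0$. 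Hence $\partial^{\frac12}_{x_1}y=-e_1y$. Since $y$ does not depend on $x_j$ for $j\geq2$, we also have $\partial^{\frac12}_{x_j}y=0$ there. Therefore $D^{\frac12}y=e_1(-e_1)y=y$ and $y(0)=y_0$.

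This is exactly where your squaring step breaks, as you suspected. The composition rule valid for this $y$ is $\partial^{\frac12}_{x_1}\partial^{\frac12}_{x_1}y=\partial_{x_1}y-\bigl(\partial^{\frac12}_{x_1}y\bigr)\big|_{x_1=0}/\sqrt{\pi x_1}$. Here $\bigl(\partial^{\frac12}_{x_1}y\bigr)\big|_{x_1=0}=-e_1y_0\neq0$, so
\[
-\sum_{k=1}^{n}\partial_{x_k}y=y+\frac{e_1y_0}{\sqrt{\pi x_1}}\neq y .
\]
Genuine solutions therefore need not satisfy your transport equation. Showing that $h(\xi)e^{-(x_1+\cdots+x_n)/n}$ fails the half-order equation says nothing about them.

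The paper's proof does not rescue this case either. It replaces $D^{\frac12}D^{\frac12}$ by the Dirac operator $D$, which contradicts the paper's own computation $(D^{\frac12})^2=-\sum_k\partial_{x_k}$ and in any case relies on the same composition rule. Its symbol $i\langle e,\xi\rangle-I$ also has eigenvalues $\pm\Vert\xi\Vert-1$, not $\pm i\Vert\xi\Vert-1$, so it is singular on the unit sphere.

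What survives is your scalar argument. It is correct, complete, and genuinely different from the paper's Fourier route. Comparing grades needs no composition rule, no regularity at the base point, and no decay at infinity. It works on any domain and for any componentwise notion of half derivative.

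It also extends verbatim to $y$ taking values in the even (or in the odd) part of the algebra. Left multiplication by $e_j$ reverses parity, so $D^{\frac12}y$ and $y$ lie in complementary subspaces. The counterexample above mixes parities, so that restriction cannot simply be dropped.

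Your fallback is therefore forced rather than optional. The true content of the lemma is the scalar (or parity-homogeneous) case, and your grade comparison proves it.
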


\begin{proof}
Taking the $D^{\frac{1}{2}}$ of both sides, we have
\[
\underset{\underset{Dy(x)}{\parallel}}{\underbrace{D^{\frac{1}{2}}\left(
D^{\frac{1}{2}}y(x)\right)  }}=D^{\frac{1}{2}}y(x)=y(x)
\]
That is
\[
Dy(x)=y(x)
\]
where $D=%
%TCIMACRO{\dsum \limits_{j=1}^{n}}%
%BeginExpansion
{\displaystyle\sum\limits_{j=1}^{n}}
%EndExpansion
e_{j}\frac{\partial}{\partial x_{j}}$

Now from
\[%
%TCIMACRO{\tciFourier}%
%BeginExpansion
\mathcal{F}%
%EndExpansion
\left(  \frac{\partial y}{\partial x_{j}}\right)  \left(  \xi\right)
=i\xi_{j}\widehat{f}(\xi)
\]
where $%
%TCIMACRO{\tciFourier}%
%BeginExpansion
\mathcal{F}%
%EndExpansion
$ or $\widehat{(.)}$ is the Fourier transform, and taking the Fourier
transform of both sides of
\[
Dy(x)=y(x)
\]
we have
\begin{align*}%
%TCIMACRO{\tciFourier}%
%BeginExpansion
\mathcal{F}%
%EndExpansion
\left(  Dy\right)  \left(  \xi\right)   & =%
%TCIMACRO{\dsum \limits_{j=1}^{n}}%
%BeginExpansion
{\displaystyle\sum\limits_{j=1}^{n}}
%EndExpansion
e_{j}%
%TCIMACRO{\tciFourier}%
%BeginExpansion
\mathcal{F}%
%EndExpansion
\left(  \frac{\partial y}{\partial x_{j}}\right)  \left(  \xi\right)  =i%
%TCIMACRO{\dsum \limits_{j=1}^{n}}%
%BeginExpansion
{\displaystyle\sum\limits_{j=1}^{n}}
%EndExpansion
e_{j}\xi_{j}\widehat{y}\left(  \xi\right)  =%
%TCIMACRO{\tciFourier}%
%BeginExpansion
\mathcal{F}%
%EndExpansion
\left(  y\right)  \left(  \xi\right)  =\widehat{y}(\xi)\\
& \Longrightarrow\text{ \ \ }i\left(  e.\xi\right)  \widehat{y}(\xi
)=\widehat{y}\left(  \xi\right) \\
& \Longrightarrow\text{ \ \ }\left(  i\left\langle e,\xi\right\rangle
-I\right)  \widehat{y}\left(  \xi\right)  =0.
\end{align*}

Now $i\left\langle e,\xi\right\rangle -I$ has eigen values $\pm i\parallel
\xi\parallel-1$ which will never be zero.

Therefore
\[
\widehat{y}\left(  \xi\right)  =0\Longrightarrow y\equiv0
\]

\end{proof}

\section{Orthogonality and Orthogonal Decompositions}

Let $d$ be a positive integer and $\Omega$ be a smooth and bounded domain in $%
%TCIMACRO{\U{211d} }%
%BeginExpansion
\mathbb{R}
%EndExpansion
^{d}$ with a non empty boundary $\partial\Omega$ and $p=2$

\begin{definition}
(Weak Fractional Derivative) For a function $f$, we say $g$ is the fractional
weak generalized $\frac{1}{2}^{th}$ order derivative of $f$ over $\Omega$
written as
\[
g=D^{\frac{1}{2}}f
\]
if%

\[
\
%TCIMACRO{\dint \limits_{\Omega}}%
%BeginExpansion
{\displaystyle\int\limits_{\Omega}}
%EndExpansion
g(x)\psi(x)d\Omega_{x}=%
%TCIMACRO{\dint \limits_{\Omega}}%
%BeginExpansion
{\displaystyle\int\limits_{\Omega}}
%EndExpansion
f(x)\left(  -\triangle\right)  ^{\frac{1}{4}}\psi(x)d\Omega_{x},\forall\psi\in
C_{0}^{\infty}\left(  \Omega\right)  \text{.}%
\]

\end{definition}

Clearly a function that is differentiable in the ordinary sense is also weakly
differentiable but not the converse. That is a weakly differentiable function
may not be differentiable in ordinary sense.

\begin{example}
Consider the function%
\[
f(x)=\left\{
\begin{array}
[c]{cc}%
0, & 0\leq x\leq1\\
x-1, & 1\leq x\leq2
\end{array}
\right.  .
\]

Then $f$ is continuous on $[0,1]$ but not differentiable in the regular sense
as the derivative%

\[
g\left(  x\right)  =\left\{
\begin{array}
[c]{cc}%
0, & 0<x<1\\
1, & 1<x<2
\end{array}
\right.
\]

is discontinuous at $x=1$. However $g$ is the first order weakly or
generalized derivative of $f$ on $[0,1]$ since%

\begin{align*}%
%TCIMACRO{\dint \limits_{\lbrack0,2]}}%
%BeginExpansion
{\displaystyle\int\limits_{\lbrack0,2]}}
%EndExpansion
f(x)\phi^{\prime}(x)dx  & =%
%TCIMACRO{\dint \limits_{[1,2]}}%
%BeginExpansion
{\displaystyle\int\limits_{[1,2]}}
%EndExpansion
x\phi^{\prime}(x)dx-%
%TCIMACRO{\dint \limits_{[1,2]}}%
%BeginExpansion
{\displaystyle\int\limits_{[1,2]}}
%EndExpansion
\phi^{\prime}(x)dx\\
& =x\phi\left(  x\right)  \mid_{1}^{2}-%
%TCIMACRO{\dint \limits_{[1,2]}}%
%BeginExpansion
{\displaystyle\int\limits_{[1,2]}}
%EndExpansion
\phi(x)dx-\phi\left(  2\right)  +\phi\left(  1\right) \\
& =-%
%TCIMACRO{\dint \limits_{[1,2]}}%
%BeginExpansion
{\displaystyle\int\limits_{[1,2]}}
%EndExpansion
\phi(x)dx\\
& =-%
%TCIMACRO{\dint \limits_{[0,2]}}%
%BeginExpansion
{\displaystyle\int\limits_{[0,2]}}
%EndExpansion
g(x)\phi(x)dx\\
& =-\int_{[0,2]}f^{\prime}(x)\phi(x)dx
\end{align*}
$\forall\phi\in C_{0}^{\infty}\left(  [0,2]\right)  $.
\end{example}

\begin{definition}
For $1<p<\infty$, $k\in%
%TCIMACRO{\U{2115} }%
%BeginExpansion
\mathbb{N}
%EndExpansion
\cup\{0\}$, the Sobolev space $W^{k,p}\left(  \Omega\right)  $ is defined as
the set of all functions $f$ in $%
%TCIMACRO{\tciLaplace}%
%BeginExpansion
\mathcal{L}%
%EndExpansion
^{p}\left(  \Omega\right)  $ such that the $\alpha^{\text{th }}$order weak
derivative $D^{\alpha}f\in%
%TCIMACRO{\tciLaplace}%
%BeginExpansion
\mathcal{L}%
%EndExpansion
^{p}\left(  \Omega\right)  $ for $0\leq\left\vert \alpha\right\vert \leq k$.
\end{definition}

As I mentioned in the preliminary, these function spaces are ideal spaces to
search for solutions to problems of reality unlike regular function spaces
such as $C^{k}$, where continuity to a particular order is required.

The particular square integrable Sobolev space (or Hilbert space of higher
regularity) $W^{k,2}\left(  \Omega\right)  $ where $p=2$, and $k\geq1$ is an
inner product space with inner product
\begin{equation}
\left\langle f,\text{ }g\right\rangle _{W^{k,2}(\Omega)}=%
%TCIMACRO{\dint \limits_{\Omega}}%
%BeginExpansion
{\displaystyle\int\limits_{\Omega}}
%EndExpansion
\left(
%TCIMACRO{\dsum \limits_{0\leq\mid\alpha\mid\leq k}}%
%BeginExpansion
{\displaystyle\sum\limits_{0\leq\mid\alpha\mid\leq k}}
%EndExpansion
D^{\alpha}f(x)D^{\alpha}g(x)\right)  d\Omega_{x}\label{innerprdct1}%
\end{equation}
with norm given by
\[
\Vert f\Vert_{W^{k,2}(\Omega)}=\left(  \left\langle f,\text{ }f\right\rangle
_{W^{k,2}(\Omega)}\right)  ^{\frac{1}{2}}%
\]
Therefore there is a distance or metric defined in terms of this norm given
by
\begin{equation}
\rho_{W^{k,2}\left(  \Omega\right)  }\left(  f,\text{ }g\right)  :=\Vert
f-g\Vert_{W^{k,2}\left(  \Omega\right)  }.\label{metric 1}%
\end{equation}
When $p=2$ and $k=0$, we have the usual Hilbert space
\[
W^{0,2}\left(  \Omega\right)  =%
%TCIMACRO{\tciLaplace}%
%BeginExpansion
\mathcal{L}%
%EndExpansion
^{2}\left(  \Omega\right)
\]
with inner product%

\begin{equation}
\left\langle f,\text{ }g\right\rangle _{W^{0,2}(\Omega)}=\left\langle f,\text{
}g\right\rangle _{%
%TCIMACRO{\tciLaplace}%
%BeginExpansion
\mathcal{L}%
%EndExpansion
^{2}(\Omega)}=%
%TCIMACRO{\dint \limits_{\Omega}}%
%BeginExpansion
{\displaystyle\int\limits_{\Omega}}
%EndExpansion
f\left(  x\right)  g(x)d\Omega_{x}\label{innerprodct2}%
\end{equation}

Now we define the fractional $\frac{1}{2}-$regularity order inner product in a
Hilbert space

\begin{definition}
When $k=1/2,p=2,$we define an inner product:
\begin{equation}
\left\langle f,\text{ }g\right\rangle _{W^{\frac{1}{2},2}(\Omega)}:=%
%TCIMACRO{\dint \limits_{\Omega}}%
%BeginExpansion
{\displaystyle\int\limits_{\Omega}}
%EndExpansion
f(x)g(x)d\Omega_{x}+%
%TCIMACRO{\dint \limits_{\Omega}}%
%BeginExpansion
{\displaystyle\int\limits_{\Omega}}
%EndExpansion%
%TCIMACRO{\dint \limits_{\Omega}}%
%BeginExpansion
{\displaystyle\int\limits_{\Omega}}
%EndExpansion
\frac{\left(  f(x)-f(y)\right)  (g(x)-g(y)}{\parallel x-y\parallel^{n+1}%
}d\Omega_{x}d\Omega_{y}.\label{fracinner}%
\end{equation}

and for $f\in W^{\frac{1}{2},2}(\Omega),$the $W^{\frac{1}{2},2}(\Omega)$ norm
is defined by
\[
\parallel f\parallel_{W^{\frac{1}{2},2}(\Omega)}^{2}:=%
%TCIMACRO{\dint \limits_{\Omega}}%
%BeginExpansion
{\displaystyle\int\limits_{\Omega}}
%EndExpansion
f^{2}(x)d\Omega_{x}+%
%TCIMACRO{\dint \limits_{\Omega}}%
%BeginExpansion
{\displaystyle\int\limits_{\Omega}}
%EndExpansion%
%TCIMACRO{\dint \limits_{\Omega}}%
%BeginExpansion
{\displaystyle\int\limits_{\Omega}}
%EndExpansion
\frac{\left(  f(x)-f(y)\right)  ^{2}}{\parallel x-y\parallel^{n+1}}d\Omega
_{x}d\Omega_{y}%
\]

\[
\therefore\text{ \ \ \ \ \ }\parallel f\parallel_{W^{\frac{1}{2},2}(\Omega
)}=\sqrt{\left\langle f,\text{ }f\right\rangle _{W^{\frac{1}{2},2}(\Omega)}%
}=\sqrt{%
%TCIMACRO{\dint \limits_{\Omega}}%
%BeginExpansion
{\displaystyle\int\limits_{\Omega}}
%EndExpansion
f^{2}(x)d\Omega_{x}+%
%TCIMACRO{\dint \limits_{\Omega}}%
%BeginExpansion
{\displaystyle\int\limits_{\Omega}}
%EndExpansion%
%TCIMACRO{\dint \limits_{\Omega}}%
%BeginExpansion
{\displaystyle\int\limits_{\Omega}}
%EndExpansion
\frac{\left(  f(x)-f(y)\right)  ^{2}}{\parallel x-y\parallel^{n+1}}d\Omega
_{x}d\Omega_{y}}%
\]

\end{definition}

\begin{definition}
We say two functions $f,$ $g\in W^{\frac{1}{2},2}\left(  \Omega\right)  $
orthogonal with respect to the inner product defined by \ref{fracinner} if
\begin{equation}
\left\langle f,\text{ }g\right\rangle _{W^{\frac{1}{2},2}(\Omega
)}=0.\label{orthogonality}%
\end{equation}

\end{definition}

For more orthogonal functions of integer regularity, see $[2]$.

\ \ \ \ 

We investigate orthogonal decompositions of function spaces with inner product
$\left\langle ,\right\rangle $ so that any function in the space is an
orthogonal sum of component functions from the orthogonal parts of the space.
The spaces we considered in earlier studies are the Sobolev spaces
$W^{k-1,2}\left(  \Omega\right)  $, the space for $p=2$ and $k\in%
%TCIMACRO{\U{2115} }%
%BeginExpansion
\mathbb{N}
%EndExpansion
$ and established several properties in \cite{dlakew1} and \cite{dlakew2}.

We consider now fractional decompositions of Hilbert spaces of fractional regularities:

\begin{proposition}
The space $%
%TCIMACRO{\tciLaplace}%
%BeginExpansion
\mathcal{L}%
%EndExpansion
^{2}\left(  \Omega\right)  $ has an orthogonal decomposition interms of the
Caputo fractional $D^{\frac{1}{2}}:$
\begin{equation}%
%TCIMACRO{\tciLaplace}%
%BeginExpansion
\mathcal{L}%
%EndExpansion
^{2}\left(  \Omega\right)  =A^{\frac{1}{2},2}\left(  \Omega\right)  \oplus
D^{\frac{1}{2}}\left(  W_{0}^{\frac{1}{2},2}\left(  \Omega\right)  \right)
\label{decomp1}%
\end{equation}
where
\[
A^{\frac{1}{2},2}\left(  \Omega\right)  =KerD^{\frac{1}{2}}\cap%
%TCIMACRO{\tciLaplace}%
%BeginExpansion
\mathcal{L}%
%EndExpansion
^{2}\left(  \Omega\right)  =\{g(x)=%
%TCIMACRO{\dint \limits_{\partial\Omega}}%
%BeginExpansion
{\displaystyle\int\limits_{\partial\Omega}}
%EndExpansion
\psi_{\frac{1}{2}}\left(  y-x\right)  \upsilon_{\frac{1}{2}}\left(  y\right)
h(y)d\partial\Omega_{y}:h\in%
%TCIMACRO{\tciLaplace}%
%BeginExpansion
\mathcal{L}%
%EndExpansion
^{2}\left(  \Omega\right)  \}
\]
and%
\[
W_{0}^{\frac{1}{2},2}\left(  \Omega\right)  =\{f\in W^{\frac{1}{2},2}\left(
\Omega\right)  :\left(  f_{\mid\partial\Omega},D^{\frac{1}{2}}f_{\mid
\partial\Omega}\right)  =\left(  0,0\right)  \}
\]
so that
\[
\forall f\in%
%TCIMACRO{\tciLaplace}%
%BeginExpansion
\mathcal{L}%
%EndExpansion
^{2}\left(  \Omega\right)  ,\exists g\in A^{\frac{1}{2},2}\left(
\Omega\right)  \text{ \ and \ }h\in D^{\frac{1}{2}}\left(  W_{0}^{\frac{1}%
{2},2}\left(  \Omega\right)  \right)
\]%
\[
f=g\uplus h.
\]

\end{proposition}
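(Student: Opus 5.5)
The natural route is the classical Hilbert-space argument used for the integer-order Dirac operator in \cite{dlakew1},\cite{lakewryan1}. The one new ingredient is a formal adjointness identity for $D^{\frac{1}{2}}$ on functions vanishing at the boundary. The plan has three steps, carried out in order.

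First, I will show that $D^{\frac{1}{2}}\left(W_{0}^{\frac{1}{2},2}(\Omega)\right)$ is a closed subspace of $\mathcal{L}^{2}(\Omega)$. The tool is the Borel--Pompeiu type formula of the earlier Proposition. For $w\in W_{0}^{\frac{1}{2},2}(\Omega)$ the boundary integral vanishes, because $w_{\mid\partial\Omega}=0$. Hence $w=-\zeta_{\Omega}^{\frac{1}{2}}D^{\frac{1}{2}}w$. If $D^{\frac{1}{2}}w_{k}\to h$ in $\mathcal{L}^{2}(\Omega)$, boundedness of the Teodorescu-type transform $\zeta_{\Omega}^{\frac{1}{2}}$ on $\mathcal{L}^{2}(\Omega)$ then makes $w_{k}$ converge to $w:=-\zeta_{\Omega}^{\frac{1}{2}}h$. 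The right-inverse property $D^{\frac{1}{2}}\zeta_{\Omega}^{\frac{1}{2}}h=h$ gives $D^{\frac{1}{2}}w=-h$ up to the sign convention. The limit stays in $W_{0}^{\frac{1}{2},2}(\Omega)$ by continuity of the trace. The Hilbert projection theorem then gives $\mathcal{L}^{2}(\Omega)=\left(D^{\frac{1}{2}}W_{0}^{\frac{1}{2},2}\right)^{\perp}\oplus D^{\frac{1}{2}}\left(W_{0}^{\frac{1}{2},2}\right)$.

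Second, I will identify the orthogonal complement with $A^{\frac{1}{2},2}(\Omega)$. Let $g\in\mathcal{L}^{2}(\Omega)$ with $\langle g,D^{\frac{1}{2}}w\rangle_{\mathcal{L}^{2}}=0$ for all $w\in W_{0}^{\frac{1}{2},2}$. In particular this holds for all $w\in C_{0}^{\infty}(\Omega)$. By the definition of the weak fractional derivative (pairing against the adjoint, here realised through $(-\triangle)^{\frac{1}{4}}$), this says exactly that $\overline{D}^{\frac{1}{2}}g=0$ weakly, where the bar denotes Clifford conjugation in the scalar part of the inner product. So $g$ is $\frac{1}{2}$-monogenic.

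Conversely, take $g\in KerD^{\frac{1}{2}}\cap\mathcal{L}^{2}(\Omega)$. The integration-by-parts identity displayed before the earlier Proposition gives $\int_{\Omega}\overline{g}\,D^{\frac{1}{2}}w=\int_{\partial\Omega}\overline{g}\,\upsilon_{\frac{1}{2}}w-\int_{\Omega}\overline{D^{\frac{1}{2}}g}\,w$. The boundary term vanishes since $w_{\mid\partial\Omega}=0$, and the volume term vanishes since $D^{\frac{1}{2}}g=0$. So the kernel lies in the complement. The description of the kernel as fractional Cauchy integrals $\zeta_{\partial\Omega}^{\frac{1}{2}}h$ follows from two earlier results: the Lemma, which says boundary integrals are $\frac{1}{2}$-monogenic, and the Corollary, which says a $\frac{1}{2}$-monogenic function is reproduced by the boundary integral of its trace.

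Third, I will assemble the decomposition. Given $f\in\mathcal{L}^{2}(\Omega)$, set $h=P f$, the orthogonal projection onto the closed range, and $g=f-h$. Orthogonality gives $f=g\uplus h$ and $\Vert f\Vert^{2}=\Vert g\Vert^{2}+\Vert h\Vert^{2}$. Uniqueness follows because $A^{\frac{1}{2},2}\cap D^{\frac{1}{2}}W_{0}^{\frac{1}{2},2}=\{0\}$.

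The main obstacle is the second step, specifically the adjointness of $D^{\frac{1}{2}}$. The Caputo derivative in each variable is a left-sided operator, so its true $\mathcal{L}^{2}$-adjoint is the right-sided (reversed) fractional derivative, not $-D^{\frac{1}{2}}$. I would first try to work throughout with the weak definition given in this section. That definition is symmetric via $(-\triangle)^{\frac{1}{4}}$, so the complement is characterised as the kernel of that weak operator. The boundary-term computation would then be carried out only for smooth $g$ and $w$, and extended by density using the bound on $\zeta_{\Omega}^{\frac{1}{2}}$ from the first step.
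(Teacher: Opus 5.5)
Your second step is where the argument breaks, and the problem is not merely technical: with the Caputo $D^{\frac12}$ of the paper, the identity it is meant to establish, $\bigl(D^{\frac12}(W_0^{\frac12,2}(\Omega))\bigr)^{\perp}=\ker D^{\frac12}\cap\mathcal{L}^{2}(\Omega)$, is false. Both halves of the step rest on the symmetric integration by parts $\int_\Omega\overline{g}\,D^{\frac12}w=-\int_\Omega\overline{D^{\frac12}g}\,w$ for $w$ vanishing on $\partial\Omega$. As you note yourself, the $\mathcal{L}^{2}$-adjoint of a left-sided Caputo derivative is a right-sided Riemann--Liouville derivative, and that operator does not annihilate constants.

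Concretely, take $n=1$ and $\Omega=(a,b)$ with $0\le a<b$. For $w\in C_0^\infty(a,b)$, Fubini and one integration by parts give
\[
\int_a^b \underset{0}{C}D_{x}^{\frac{1}{2}}w\,dx=\frac{1}{\sqrt{\pi}}\int_a^b\frac{w(t)}{\sqrt{b-t}}\,dt,\qquad \underset{0}{C}D_{x}^{\frac{1}{2}}w\,(b)=-\frac{1}{2\sqrt{\pi}}\int_a^b\frac{w(t)}{(b-t)^{\frac32}}\,dt .
\]
The two weights are not proportional, so you can choose $w$ making the second quantity vanish and the first nonzero. Then:
\begin{itemize}
\item $w\in W_0^{\frac12,2}(a,b)$, even with the paper's second boundary condition;
\item the constant $e_1$ lies in $\ker D^{\frac12}$;
\item yet $\langle e_1,D^{\frac12}w\rangle=\pm\frac{1}{\sqrt{\pi}}\int_a^b w(t)(b-t)^{-\frac12}\,dt\neq0$, with the sign depending on whether the inner product conjugates.
\end{itemize}
A product $w=\phi(x_1)\chi_2(x_2)\cdots\chi_n(x_n)$ of such functions (with $\int\chi_k\neq0$) does the same on a box in $\mathbb{R}^n$.

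So the ``conversely'' half of your step two is false. The other half (``$g\perp D^{\frac12}(C_0^\infty(\Omega))$, hence $g$ is $\frac12$-monogenic'') is unjustified for the same reason: orthogonality only yields annihilation by the adjoint. Your fallback does not repair this, because any weak formulation identifies the complement as $\ker\bigl((D^{\frac12})^{*}\bigr)$, which is a different space. The paper's weak definition even pairs the vector-valued $D^{\frac12}$ with the scalar operator $(-\triangle)^{\frac14}$, so it does not agree with the Caputo definition on smooth functions. In one variable, $g\perp\underset{0}{C}D_{x}^{\frac{1}{2}}(C_0^\infty(a,b))$ forces $\frac{1}{\sqrt{\pi}}\int_x^b g(t)(t-x)^{-\frac12}\,dt$ to be constant on $(a,b)$. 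The only solutions are multiples of $(b-x)^{-\frac12}\notin\mathcal{L}^{2}(a,b)$, so that complement is $\{0\}$, while the Caputo kernel contains the constants. What your scheme can actually deliver is $\mathcal{L}^{2}(\Omega)=\ker\bigl((D^{\frac12})^{*}\bigr)\oplus\overline{D^{\frac12}(W_0^{\frac12,2}(\Omega))}$, not the stated decomposition.

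Step one has further, secondary problems. At regularity exactly $\frac12$ there is no bounded trace into $\mathcal{L}^{2}(\partial\Omega)$: for the Gagliardo norm the paper uses, $C_0^\infty(\Omega)$ is dense in $W^{\frac12,2}(\Omega)$. So ``continuity of the trace'' is unavailable, and the second condition $(D^{\frac12}f)_{\mid\partial\Omega}=0$ is not even meaningful for an $\mathcal{L}^{2}$-limit; closedness of the range is not established. By the second formula above, nonlocality also means $C_0^\infty(\Omega)\not\subseteq W_0^{\frac12,2}(\Omega)$ under the paper's definition, which your step two takes for granted. The sign you set aside as a convention is a symptom as well. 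The paper's formula $f=\zeta_{\partial\Omega}^{\frac12}f-\zeta_{\Omega}^{\frac12}D^{\frac12}f$, together with $D^{\frac12}\zeta_{\Omega}^{\frac12}=I$ and $D^{\frac12}\zeta_{\partial\Omega}^{\frac12}=0$, gives $D^{\frac12}f=-D^{\frac12}f$, so those earlier results cannot be used as black boxes.

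For comparison, the paper's own proof takes a different and much cruder route. It lets $g$ be the mean value of $f$ and $h=f-g$, asserts without argument that $h\in D^{\frac12}(W_0^{\frac12,2}(\Omega))$, and obtains $\langle g,h\rangle=0$ by moving $D^{\frac12}$ onto the constant $g$---exactly the transfer refuted above. By uniqueness of orthogonal decompositions, it would moreover force $A^{\frac12,2}(\Omega)$ to consist of constants only. But for $n\ge2$, $\ker D^{\frac12}$ contains non-constant functions such as $x_1^{\frac12}e_1-x_2^{\frac12}e_2$. Your Hilbert-space framework is the right shape and better organised than the paper's argument. With the Caputo $D^{\frac12}$, however, the first summand has to be the kernel of the right-sided adjoint, and no argument can produce the decomposition exactly as stated.
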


\begin{proof}
Let $f\in%
%TCIMACRO{\tciLaplace}%
%BeginExpansion
\mathcal{L}%
%EndExpansion
^{2}\left(  \Omega\right)  $. Then consider $g(x)=\frac{1}{\mu\left(
\Omega\right)  }%
%TCIMACRO{\dint \limits_{\Omega}}%
%BeginExpansion
{\displaystyle\int\limits_{\Omega}}
%EndExpansion
f(x)d\Omega_{x}$ which is a constant. Then $g\in KerD^{\frac{1}{2}}\left(
\Omega\right)  \cap%
%TCIMACRO{\tciLaplace}%
%BeginExpansion
\mathcal{L}%
%EndExpansion
^{2}\left(  \Omega\right)  $. Now consider $h=f-g$. Claim that $h\in
D^{\frac{1}{2}}\left(  W_{0}^{\frac{1}{2},2}\left(  \Omega\right)  \right)  $

Therefore
\[
\exists\eta\in W_{0}^{\frac{1}{2},2}\left(  \Omega\right)  \ni h\left(
x\right)  =D^{\frac{1}{2}}\eta\left(  x\right)  =\left(  -\triangle\right)
^{\frac{1}{4}}\eta\left(  x\right)  =\frac{\Gamma\left(  \frac{n+1}{2}\right)
}{\sqrt{\pi^{n+1}}}P.V.%
%TCIMACRO{\dint \limits_{\Omega}}%
%BeginExpansion
{\displaystyle\int\limits_{\Omega}}
%EndExpansion
\frac{\eta\left(  x\right)  -\eta\left(  y\right)  }{\parallel x-y\parallel
^{n+1}}d\Omega_{y}%
\]
.

Then
\[
\left\langle g(x),h(x)\right\rangle _{%
%TCIMACRO{\tciLaplace}%
%BeginExpansion
\mathcal{L}%
%EndExpansion
^{2}\left(  \Omega\right)  }=\left\langle g(x),D^{\frac{1}{2}}\eta
(x)\right\rangle _{%
%TCIMACRO{\tciLaplace}%
%BeginExpansion
\mathcal{L}%
%EndExpansion
^{2}\left(  \Omega\right)  }=\left\langle D^{\frac{1}{2}}%
g(x),h(x)\right\rangle _{%
%TCIMACRO{\tciLaplace}%
%BeginExpansion
\mathcal{L}%
%EndExpansion
^{2}\left(  \Omega\right)  }=\left\langle 0,h(x)\right\rangle _{%
%TCIMACRO{\tciLaplace}%
%BeginExpansion
\mathcal{L}%
%EndExpansion
^{2}\left(  \Omega\right)  }=0,\because g\text{ is a constant}%
\]

Hence,%
\[
f=g+h\text{ \ with property}:\left\langle g,h\right\rangle _{%
%TCIMACRO{\tciLaplace}%
%BeginExpansion
\mathcal{L}%
%EndExpansion
^{2}\left(  \Omega\right)  }=0\Longrightarrow f=g\uplus h
\]

\end{proof}

\begin{remark}
Constants are also in $KerD^{\frac{1}{2}}\left(  \Omega\right)  \cap%
%TCIMACRO{\tciLaplace}%
%BeginExpansion
\mathcal{L}%
%EndExpansion
^{2}\left(  \Omega\right)  $ as long as $\Omega$ is bounded, since for $%
%TCIMACRO{\dint \limits_{\partial\Omega}}%
%BeginExpansion
{\displaystyle\int\limits_{\partial\Omega}}
%EndExpansion
\psi_{\frac{1}{2}}\left(  y-x\right)  \upsilon_{\frac{1}{2}}\left(  y\right)
cd\partial\Omega_{y}=c\underset{\underset{1}{\parallel}}{\underbrace{%
%TCIMACRO{\dint \limits_{\partial\Omega}}%
%BeginExpansion
{\displaystyle\int\limits_{\partial\Omega}}
%EndExpansion
\psi_{\frac{1}{2}}\left(  y-x\right)  \upsilon_{\frac{1}{2}}\left(  y\right)
d\partial\Omega_{y}}}=c$
\end{remark}

\begin{lemma}
For $\psi_{\frac{1}{2}}\left(  y-x\right)  $, the fundamental solution to
$D^{\frac{1}{2}},%
%TCIMACRO{\dint \limits_{\partial\Omega}}%
%BeginExpansion
{\displaystyle\int\limits_{\partial\Omega}}
%EndExpansion
\psi_{\frac{1}{2}}\left(  y-x\right)  \upsilon_{\frac{1}{2}}\left(  y\right)
d\partial\Omega_{y}=1.$
\end{lemma}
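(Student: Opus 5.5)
The plan is to obtain the identity as a special case of the integral representation already established, namely the Proposition giving, for $f\in W^{\frac{1}{2},2}(\Omega)$,
\[
f(x)=\int_{\partial\Omega}\psi_{\frac{1}{2}}(y-x)\upsilon_{\frac{1}{2}}(y)f(y)\,d\partial\Omega_{y}-\int_{\Omega}\psi_{\frac{1}{2}}(y-x)D^{\frac{1}{2}}f(y)\,d\Omega_{y}.
\]
We apply it to the constant function $f\equiv 1$ on $\Omega$.

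First, $f\equiv 1$ lies in $W^{\frac{1}{2},2}(\Omega)$. Since $\Omega$ is bounded, $\int_{\Omega}1\,d\Omega_x=\mu(\Omega)<\infty$. The Gagliardo part of the norm in (\ref{fracinner}) vanishes identically because $f(x)-f(y)=0$.

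Second, $D^{\frac{1}{2}}1=0$ in the Caputo sense. Each component is $\frac{1}{\sqrt{\pi}}\int_0^{x_j}\frac{\partial_{t}1}{\sqrt{x_j-t}}\,dt=0$, as observed at the start of the paper for constants. The domain integral in the representation therefore drops out for every interior point $x\in\Omega$. What remains is
\[
1=\int_{\partial\Omega}\psi_{\frac{1}{2}}(y-x)\upsilon_{\frac{1}{2}}(y)\,d\partial\Omega_{y},
\]
which is the claim. It also justifies the Remark preceding the lemma.

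The main obstacle is that the representation formula itself comes from integration by parts against the singular kernel $\psi_{\frac{1}{2}}(y-x)=\frac{y-x}{\omega_n\|y-x\|^{n+\frac12}}$ together with $D^{\frac{1}{2}}\psi_{\frac{1}{2}}=\delta$. To make the special case rigorous, I would rerun that argument directly for $f\equiv1$ on $\Omega_\varepsilon=\Omega\setminus B(x,\varepsilon)$.

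On $\Omega_\varepsilon$ the kernel is smooth and $D^{\frac{1}{2}}_y\psi_{\frac{1}{2}}(y-x)=0$, so the fractional Stokes/divergence identity gives equality of the flux over $\partial\Omega$ and over the sphere $\partial B(x,\varepsilon)$. The sphere contribution is then computed explicitly, and $\varepsilon\to0$. The normalisation $\omega_n$ in $\psi_{\frac{1}{2}}$ is designed so that this limit equals $1$, which is the delta-mass statement.

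I expect verifying that the sphere flux tends exactly to $1$ to be the delicate step. The exponent $n+\frac12$ means the sphere integral scales like $\varepsilon^{-\frac{1}{2}}$ times the surface measure factor, and its cancellation against the fractional normal $\upsilon_{\frac12}$ must be checked. If the direct computation is troublesome, I would instead rely wholly on the earlier Proposition and the defining relation $D^{\frac12}\psi_{\frac12}=\delta$, as the paper does.
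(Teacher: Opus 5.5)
\textbf{There is a genuine gap.} It sits in the step you flag as delicate: the sphere flux does not tend to $1$. No argument can make it do so, because with the kernel as defined the identity is false.

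Take for $\upsilon_{\frac12}$ the ordinary outward unit normal. This is the only concrete choice the paper ever makes: in its own proof, $\nu_{\frac12}(y)=\frac{y-x}{\delta}$. Since $e_j^2=-1$ and $e_ie_j=-e_je_i$ for $i\neq j$, one has $(y-x)(y-x)=-\|y-x\|^2$. Hence, for every $\varepsilon>0$,
\[
\int_{\partial B(x,\varepsilon)}\psi_{\frac12}(y-x)\,\frac{y-x}{\varepsilon}\,d\sigma_y=-\frac{1}{\omega_n\varepsilon^{n-\frac12}}\cdot\omega_n\varepsilon^{n-1}=-\varepsilon^{-\frac12}.
\]
This diverges as $\varepsilon\to0$. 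Taking $\Omega=B(x,R)$ shows that the left side of the lemma equals $-R^{-\frac12}$. That value depends on $R$ and has the wrong sign.

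The factor $1/\omega_n$ gives a radius-independent flux only for a kernel homogeneous of degree $1-n$, namely the classical Cauchy kernel with the conjugate $\overline{y-x}$. Your $\psi_{\frac12}$ is homogeneous of degree $\frac12-n$, so its sphere flux scales like $\varepsilon^{-\frac12}$.

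Your excision step is also unsupported. The Caputo operator is nonlocal and anchored at the hyperplanes $\{y_j=0\}$. So ``$D^{\frac12}_y\psi_{\frac12}(y-x)=0$ on $\Omega_\varepsilon$'' is not a local statement, and there is no fractional Stokes formula equating the flux through $\partial\Omega$ with the flux through the small sphere. If you use the classical divergence theorem instead, you get $\nabla\cdot\frac{y-x}{\|y-x\|^{n+1/2}}=-\frac12\|y-x\|^{-n-\frac12}\neq0$. That nonzero divergence is exactly why the flux changes with the radius.

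Your main reduction applies the representation formula to $f\equiv1$, using the correct facts $1\in W^{\frac12,2}(\Omega)$ and $D^{\frac12}1=0$. But it is only a formal consequence of a formula that already fails for $f\equiv1$ on a ball. That Proposition rests on the same integration by parts with $D^{\frac12}\psi_{\frac12}=\delta$, and the computation above refutes it.

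Your fallback ``as the paper does'' offers no rescue either. The paper's proof of this lemma does not invoke the Proposition. It silently replaces $\partial\Omega$ by a sphere $\partial B_\delta(x)$, which assumes the flux conservation that fails. It then meets the same factor $\delta^{-\frac12}$ and removes it by declaring $C_{n,\frac12}=\sqrt{\delta}/\omega_n$. That is a radius-dependent ``constant'', and it contradicts the $1/\omega_n$ normalisation in the definition of $\psi_{\frac12}$.

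Neither your argument nor the paper's establishes the statement. An identity of this type requires changing the kernel or the statement, not a sharper limit computation.
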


\begin{proof}
For $y\in\partial\Omega,$and $x\in\Omega\setminus\partial\Omega$ consider a
small ball $B_{\delta}\left(  x\right)  \subseteq\Omega$ of small radius
$\delta=\parallel x-y\parallel>0.$ Consider the normal vector $\nu_{\frac
{1}{2}}\left(  y\right)  =\frac{y-x}{\delta}$.

Then
\[%
%TCIMACRO{\dint \limits_{\partial B_{\delta}\left(  x\right)  }}%
%BeginExpansion
{\displaystyle\int\limits_{\partial B_{\delta}\left(  x\right)  }}
%EndExpansion
\psi_{\frac{1}{2}}\left(  y-x\right)  \upsilon_{\frac{1}{2}}\left(  y\right)
d\partial\Omega_{y}=%
%TCIMACRO{\dint \limits_{\partial B_{\delta}\left(  x\right)  }}%
%BeginExpansion
{\displaystyle\int\limits_{\partial B_{\delta}\left(  x\right)  }}
%EndExpansion
C_{n,\frac{1}{2}}\frac{1}{\delta^{n-\frac{1}{2}}}.\delta^{n-1}d\omega
_{n}=C_{n,\frac{1}{2}}%
%TCIMACRO{\dint \limits_{\partial B_{1}\left(  0\right)  }}%
%BeginExpansion
{\displaystyle\int\limits_{\partial B_{1}\left(  0\right)  }}
%EndExpansion
\delta^{\frac{-1}{2}}d\omega_{n}=C_{n,\frac{1}{2}}\delta^{\frac{-1}{2}}%
\omega_{n}=1
\]
$\because C_{n,\frac{1}{2}}=\frac{\sqrt{\delta}}{\omega_{n}}$, where
$\omega_{n}:=$ surface area of a unit sphere in $%
%TCIMACRO{\U{211d} }%
%BeginExpansion
\mathbb{R}
%EndExpansion
^{n}$.
\end{proof}

\begin{corollary}%
\[
KerD^{\frac{1}{2}}\left(  \Omega\right)  =\{%
%TCIMACRO{\dint \limits_{\partial\Omega}}%
%BeginExpansion
{\displaystyle\int\limits_{\partial\Omega}}
%EndExpansion
\psi_{\frac{1}{2}}\left(  y-x\right)  \upsilon_{\frac{1}{2}}\left(  y\right)
g(y)d\partial\Omega_{y}%
\]
for $g\in%
%TCIMACRO{\tciLaplace}%
%BeginExpansion
\mathcal{L}%
%EndExpansion
^{2}\left(  \partial\Omega\right)  \}$.
\end{corollary}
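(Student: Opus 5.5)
To prove the Corollary we show two inclusions between $KerD^{\frac{1}{2}}\left(\Omega\right)$ and the set $\mathcal{K}:=\{\zeta_{\partial\Omega}^{\frac{1}{2}}g: g\in\mathcal{L}^{2}(\partial\Omega)\}$. Both follow from results already stated: the $\frac{1}{2}$-monogenicity Lemma gives one direction, and the Borel--Pompeiu type Proposition gives the other.

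\emph{Inclusion $\mathcal{K}\subseteq KerD^{\frac{1}{2}}(\Omega)$.} Let $g\in\mathcal{L}^{2}(\partial\Omega)$. The Lemma asserting that $\phi(x)=\int_{\partial\Omega}\psi_{\frac{1}{2}}(y-x)\upsilon_{\frac{1}{2}}(y)g(y)d\partial\Omega_{y}$ is $\frac{1}{2}$-monogenic applies verbatim. Its proof only uses that $x\neq y$ for $x$ in the interior and $y\in\partial\Omega$, so that $D_{x}^{\frac{1}{2}}\psi_{\frac{1}{2}}(y-x)=0$; it never uses where $g$ lives. To move $D^{\frac{1}{2}}$ under the integral, we note that for $x$ in a compact subset of $\Omega$ the kernel and its fractional derivatives are bounded uniformly in $y\in\partial\Omega$. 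Since $\partial\Omega$ has finite measure, $g\in\mathcal{L}^{2}(\partial\Omega)\subseteq\mathcal{L}^{1}(\partial\Omega)$, and dominated convergence justifies the interchange. Hence $\zeta_{\partial\Omega}^{\frac{1}{2}}g\in KerD^{\frac{1}{2}}(\Omega)$.

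\emph{Inclusion $KerD^{\frac{1}{2}}(\Omega)\subseteq\mathcal{K}$.} Let $u\in KerD^{\frac{1}{2}}(\Omega)$, understood within $W^{\frac{1}{2},2}(\Omega)$ so that the representation Proposition applies. That Proposition gives
\[
u(x)=\int_{\partial\Omega}\psi_{\frac{1}{2}}(y-x)\upsilon_{\frac{1}{2}}(y)u(y)d\partial\Omega_{y}-\int_{\Omega}\psi_{\frac{1}{2}}(y-x)D^{\frac{1}{2}}u(y)d\Omega_{y}.
\]
Because $D^{\frac{1}{2}}u=0$, the domain integral vanishes. Setting $g=\tau u$, the trace of $u$, we get $u=\zeta_{\partial\Omega}^{\frac{1}{2}}g$. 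This is exactly the second statement of the Corollary on traces, with $f=0$. As a consistency check, for constants this reproduces $c=c\cdot 1$ by the Lemma computing $\int_{\partial\Omega}\psi_{\frac{1}{2}}\upsilon_{\frac{1}{2}}d\partial\Omega_{y}=1$.

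\emph{Main obstacle.} The difficult step is showing that the trace satisfies $g=\tau u\in\mathcal{L}^{2}(\partial\Omega)$. For $u\in W^{\frac{1}{2},2}(\Omega)$, the standard trace theorem fails at exactly the regularity $\frac{1}{2}$. The plan is to avoid this by first taking $u$ smooth up to the boundary, where the trace is classical. We would then extend by density, using that $u\in KerD^{\frac{1}{2}}$ is interior-regular, which is inherited from the kernel representation. Alternatively, we would restrict the statement to kernel elements whose boundary values lie in $\mathcal{L}^{2}(\partial\Omega)$, which is the setting in which the BVPs of the paper are posed.
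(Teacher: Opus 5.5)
\textbf{Verdict: the forward inclusion is fine within the paper's framework, but the reverse inclusion has a genuine gap that your proposed fixes do not close.}

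\textbf{Forward inclusion.} The paper's proof establishes only $\mathcal{K}\subseteq\ker D^{\frac{1}{2}}(\Omega)$, and by a different route from yours. It takes a smooth extension $u$ of $g$ and writes the Borel--Pompeiu representation of $u$. It then applies $D^{\frac{1}{2}}$ to both sides and uses that $D^{\frac{1}{2}}\zeta_{\Omega}^{\frac{1}{2}}$ is the identity to cancel the volume term, leaving $D^{\frac{1}{2}}\zeta_{\partial\Omega}^{\frac{1}{2}}g=0$.

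Citing the monogenicity Lemma instead is more direct. It also covers every $g\in\mathcal{L}^{2}(\partial\Omega)$, whereas a smooth extension exists only for $g$ that are boundary values of smooth functions. The paper's route, in exchange, never differentiates under the boundary integral.

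The dominated-convergence justification you add for that interchange does not fit this operator. With the paper's Caputo definition, $\partial_{x_{j}}^{\frac{1}{2}}$ at $x$ is an integral along the segment from the hyperplane $\{x_{j}=0\}$ to $x$. That segment can leave $\Omega$ and pass through a point $y\in\partial\Omega$, and then the Caputo integral of $\psi_{\frac{1}{2}}(y-\cdot)$ diverges. So the fractional derivatives of the kernel are not bounded uniformly in $y\in\partial\Omega$ for $x$ in a compact subset of $\Omega$. That bound reflects the locality of a classical derivative, which $D^{\frac{1}{2}}$ does not have. Here you are really relying on the Lemma as stated, not on an independent argument.

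\textbf{Reverse inclusion.} This inclusion is not in the paper's proof at all. Deriving it from the Borel--Pompeiu formula with $D^{\frac{1}{2}}u=0$ is the natural idea, but the obstacle you flag is real.

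\begin{itemize}
\item Because $C_{0}^{\infty}(\Omega)$ is dense in $W^{\frac{1}{2},2}(\Omega)$ for the paper's Gagliardo norm, convergence in that norm carries no information about boundary values.
\item Interior regularity of $u$ says nothing about boundary values either.
\item Obtaining that regularity ``from the kernel representation'' is circular, since that representation is what you are proving.
\end{itemize}

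A density argument would need two things. First, an a priori bound $\Vert\tau v\Vert_{\mathcal{L}^{2}(\partial\Omega)}\leq C\Vert v\Vert_{W^{\frac{1}{2},2}(\Omega)}$ for kernel elements $v$ smooth up to the boundary. Second, density of such $v$ in $\ker D^{\frac{1}{2}}\cap W^{\frac{1}{2},2}(\Omega)$. Together these amount to a Hardy-space type theory for $D^{\frac{1}{2}}$, which nothing in the paper supplies. Your alternative fix, restricting to kernel elements with $\tau u\in\mathcal{L}^{2}(\partial\Omega)$, proves a weaker statement.

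There is also a consistency issue. If the kernel is taken inside $W^{\frac{1}{2},2}(\Omega)$, as your second half requires, then the first half must also show $\zeta_{\partial\Omega}^{\frac{1}{2}}g\in W^{\frac{1}{2},2}(\Omega)$ for all $g\in\mathcal{L}^{2}(\partial\Omega)$. That is the mirror-image borderline estimate, and the paper only asserts it in a Remark.

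As written, you have proved $\mathcal{K}\subseteq\ker D^{\frac{1}{2}}(\Omega)$, which covers what the paper proves. You have proved the reverse inclusion only for kernel elements with square-integrable trace.
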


\begin{proof}
Clearly, for $g\in%
%TCIMACRO{\tciLaplace}%
%BeginExpansion
\mathcal{L}%
%EndExpansion
^{2}\left(  \partial\Omega\right)  \},$let $u$ be its smooth extension to
$\Omega$. Then
\[
u(x)=%
%TCIMACRO{\dint \limits_{\partial\Omega}}%
%BeginExpansion
{\displaystyle\int\limits_{\partial\Omega}}
%EndExpansion
\psi_{\frac{1}{2}}\left(  y-x\right)  g(y)\upsilon_{\frac{1}{2}}\left(
y\right)  d\partial\Omega_{y}+%
%TCIMACRO{\dint \limits_{\Omega}}%
%BeginExpansion
{\displaystyle\int\limits_{\Omega}}
%EndExpansion
\psi_{\frac{1}{2}}\left(  y-x\right)  D^{\frac{1}{2}}u\left(  y\right)
d\Omega_{y}%
\]

Then differentiating both sides with half order:
\begin{align*}
D^{\frac{1}{2}}u(x)  & =D^{\frac{1}{2}}\{%
%TCIMACRO{\dint \limits_{\partial\Omega}}%
%BeginExpansion
{\displaystyle\int\limits_{\partial\Omega}}
%EndExpansion
\psi_{\frac{1}{2}}\left(  y-x\right)  g(y)\upsilon_{\frac{1}{2}}\left(
y\right)  d\partial\Omega_{y}+%
%TCIMACRO{\dint \limits_{\Omega}}%
%BeginExpansion
{\displaystyle\int\limits_{\Omega}}
%EndExpansion
\psi_{\frac{1}{2}}\left(  y-x\right)  D^{\frac{1}{2}}u\left(  y\right)
d\Omega_{y}\}\\
& =D^{\frac{1}{2}}\{%
%TCIMACRO{\dint \limits_{\partial\Omega}}%
%BeginExpansion
{\displaystyle\int\limits_{\partial\Omega}}
%EndExpansion
\psi_{\frac{1}{2}}\left(  y-x\right)  g(y)\upsilon_{\frac{1}{2}}\left(
y\right)  d\partial\Omega_{y}\}+D^{\frac{1}{2}}\{%
%TCIMACRO{\dint \limits_{\Omega}}%
%BeginExpansion
{\displaystyle\int\limits_{\Omega}}
%EndExpansion
\psi_{\frac{1}{2}}\left(  y-x\right)  D^{\frac{1}{2}}u\left(  y\right)
d\Omega_{y}\}
\end{align*}

But
\[
D^{\frac{1}{2}}\{%
%TCIMACRO{\dint \limits_{\Omega}}%
%BeginExpansion
{\displaystyle\int\limits_{\Omega}}
%EndExpansion
\psi_{\frac{1}{2}}\left(  y-x\right)  D^{\frac{1}{2}}u\left(  y\right)
d\Omega_{y}\}=D^{\frac{1}{2}}\xi_{\Omega}^{\frac{1}{2}}(D^{\frac{1}{2}%
}u(x))=D^{\frac{1}{2}}u(x)
\]
since $D^{\frac{1}{2}}\xi_{\Omega}^{\frac{1}{2}}$ is the identity operator.

Hence,
\[
D^{\frac{1}{2}}u(x)=D^{\frac{1}{2}}\{%
%TCIMACRO{\dint \limits_{\partial\Omega}}%
%BeginExpansion
{\displaystyle\int\limits_{\partial\Omega}}
%EndExpansion
\psi_{\frac{1}{2}}\left(  y-x\right)  g(y)\upsilon_{\frac{1}{2}}\left(
y\right)  d\partial\Omega_{y}\}+D^{\frac{1}{2}}u(x)
\]

\[
\Longrightarrow\ \ \ \ \ \ \ \ \ \ \ \ D^{\frac{1}{2}}\{%
%TCIMACRO{\dint \limits_{\partial\Omega}}%
%BeginExpansion
{\displaystyle\int\limits_{\partial\Omega}}
%EndExpansion
\psi_{\frac{1}{2}}\left(  y-x\right)  g(y)\upsilon_{\frac{1}{2}}\left(
y\right)  d\partial\Omega_{y}\}=0,\text{ \ }i.e.,%
%TCIMACRO{\dint \limits_{\partial\Omega}}%
%BeginExpansion
{\displaystyle\int\limits_{\partial\Omega}}
%EndExpansion
\psi_{\frac{1}{2}}\left(  y-x\right)  g(y)\upsilon_{\frac{1}{2}}\left(
y\right)  d\partial\Omega_{y}\in\ker D^{\frac{1}{2}}\left(  \Omega\right)  .
\]

This result includes constants as well, i.e., $g(x)=c$
\end{proof}

\begin{example}
Consider $f(x)=\Vert x\Vert^{2}=%
%TCIMACRO{\dsum \limits_{i=1}^{n}}%
%BeginExpansion
{\displaystyle\sum\limits_{i=1}^{n}}
%EndExpansion
x_{i}^{2}:\Omega:=\Pi_{i=1}^{n}\left[  -1,1\right]  \longrightarrow%
%TCIMACRO{\U{211d} }%
%BeginExpansion
\mathbb{R}
%EndExpansion
$. Consider
\[
g\left(  x\right)  =\frac{1}{|\Omega|}%
%TCIMACRO{\dint \limits_{\Omega}}%
%BeginExpansion
{\displaystyle\int\limits_{\Omega}}
%EndExpansion
f(x)d\Omega_{x}=\frac{1}{2^{n}}%
%TCIMACRO{\dint \limits_{-1}^{1}}%
%BeginExpansion
{\displaystyle\int\limits_{-1}^{1}}
%EndExpansion
...%
%TCIMACRO{\dint \limits_{-1}^{1}}%
%BeginExpansion
{\displaystyle\int\limits_{-1}^{1}}
%EndExpansion%
%TCIMACRO{\dsum \limits_{i=1}^{n}}%
%BeginExpansion
{\displaystyle\sum\limits_{i=1}^{n}}
%EndExpansion
x_{i}^{2}\underset{d\Omega_{x}}{\underbrace{dx_{1}dx_{2}...dx_{n}}}=\frac
{1}{2^{n}}\left(  n.\frac{2^{n}}{3}\right)  =\frac{n}{3}%
\]

Set $h\left(  x\right)  =\Vert x\Vert^{2}-\frac{n}{3}$. Then $f(x)=g(x)+h(x)$
and
\[
\left\langle g(x),h(x)\right\rangle _{%
%TCIMACRO{\tciLaplace}%
%BeginExpansion
\mathcal{L}%
%EndExpansion
^{2}\left(  \Omega\right)  }=%
%TCIMACRO{\dint \limits_{\Omega}}%
%BeginExpansion
{\displaystyle\int\limits_{\Omega}}
%EndExpansion
g(x)h(x)d\Omega_{x}=\frac{n}{3}%
%TCIMACRO{\dint \limits_{\Omega}}%
%BeginExpansion
{\displaystyle\int\limits_{\Omega}}
%EndExpansion
\left(  \Vert x\Vert^{2}-\frac{n}{3}\right)  d\Omega_{x}=\frac{n}{3}\left(
\frac{n2^{n}}{3}-\frac{n2^{n}}{3}\right)  =0
\]

Hence
\[
f(x)=g(x)\uplus h(x)
\]

Also the parallelogram law holds:%
\[
\Vert f\Vert_{%
%TCIMACRO{\tciLaplace}%
%BeginExpansion
\mathcal{L}%
%EndExpansion
^{2}\left(  \Omega\right)  }=\left(  \Vert g\Vert_{%
%TCIMACRO{\tciLaplace}%
%BeginExpansion
\mathcal{L}%
%EndExpansion
^{2}\left(  \Omega\right)  }^{2}+\Vert h\Vert_{%
%TCIMACRO{\tciLaplace}%
%BeginExpansion
\mathcal{L}%
%EndExpansion
^{2}\left(  \Omega\right)  }^{2}\right)  ^{\frac{1}{2}}%
\]

\end{example}

These decompositions enable us to give the following main results of our research.

\section{Boundary Value Problems and Decompositions}

In this section we consider first and second order boundary value problems and
see how the solutions are simply the orthogonal sums of parts that evolve from
boundary values of the solution and interior values of the derivative of the
solution to the given order. We start with the first order Cauchy problem.

\begin{theorem}
\begin{proposition}
Let $f\in%
%TCIMACRO{\tciLaplace}%
%BeginExpansion
\mathcal{L}%
%EndExpansion
^{2}\left(  \Omega\right)  $,$g\in%
%TCIMACRO{\tciLaplace}%
%BeginExpansion
\mathcal{L}%
%EndExpansion
^{2}\left(  \partial\Omega\right)  $ The first Caputo fractional $1/2$ order
Cauchy problem%
\begin{equation}
\left\{
\begin{array}
[c]{cc}%
D^{\frac{1}{2}}u=f & \text{in }\Omega\\
u=g & \text{on }\partial\Omega
\end{array}
\right.
\end{equation}
has a solution $u\in W^{\frac{1}{2},2}\left(  \Omega\right)  $ such that
\[
u=\left[  u\right]  _{f}\uplus\left[  u\right]  _{g}%
\]
where $\left[  u\right]  _{f}$ is the part of the solution that evolves from
$f $ and $\left[  u\right]  _{g}$ is the part of the solution that evolves
from $g $ with the following properties

$(i)$
\[
\left\langle \left[  u\right]  _{f},\text{ }\left[  u\right]  _{g}%
\right\rangle _{W^{\frac{1}{2},2}\left(  \Omega\right)  }=0
\]
$\left(  ii\right)  $%
\[
\parallel u\parallel_{W^{\frac{1}{2},2}\left(  \Omega\right)  }^{2}%
=\parallel\left[  u\right]  _{f}\parallel_{W^{\frac{1}{2},2}\left(
\Omega\right)  }^{2}+\parallel\left[  u\right]  _{g}\parallel_{W^{\frac{1}%
{2},2}\left(  \Omega\right)  }^{2}%
\]

\end{proposition}

$\left(  iii\right)  $
\[
\left\langle u,\text{ }\left[  u\right]  _{f}\right\rangle _{W^{\frac{1}{2}%
,2}\left(  \Omega\right)  }=\parallel\left[  u\right]  _{f}\parallel
_{W^{\frac{1}{2},2}\left(  \Omega\right)  }^{2}%
\]

$\left(  iv\right)  $
\[
\left\langle u,\text{ }\left[  u\right]  _{g}\right\rangle _{W^{\frac{1}{2}%
,2}\left(  \Omega\right)  }=\parallel\left[  u\right]  _{g}\parallel
_{W^{\frac{1}{2},2}\left(  \Omega\right)  }^{2}%
\]

\end{theorem}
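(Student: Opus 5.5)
The plan is to build the solution from the representation formula of Proposition~3, split it into a monogenic piece and a piece with zero trace, and then get orthogonality from the decomposition (\ref{decomp1}). Concretely, I set
\[
[u]_{g}:=\zeta_{\partial\Omega}^{\frac{1}{2}}g,\qquad [u]_{f}:=-\zeta_{\Omega}^{\frac{1}{2}}f ,
\]
so that $u=[u]_{g}+[u]_{f}$ solves the Cauchy problem by Proposition~3.

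The first step is to record the properties of each piece. By Lemma~1 (and Corollary~2), $D^{\frac{1}{2}}[u]_{g}=0$, so $[u]_{g}\in A^{\frac{1}{2},2}(\Omega)$. Since $\zeta_{\Omega}^{\frac{1}{2}}$ is a right inverse of $D^{\frac{1}{2}}$, we have $D^{\frac{1}{2}}[u]_{f}=f$ (up to the sign convention fixed in Proposition~3). The trace of $[u]_{f}$ vanishes by the first part of Corollary~2, so $[u]_{f}$ lies in the zero-trace class $W_{0}^{\frac{1}{2},2}(\Omega)$. Membership of both pieces in $W^{\frac{1}{2},2}(\Omega)$ is the content of the Remark following Corollary~2.

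The second step is the orthogonality (i), which I expect to be the main obstacle. The decomposition (\ref{decomp1}) is stated for $\mathcal{L}^{2}(\Omega)$, whereas (i) concerns the full inner product (\ref{fracinner}), which also contains the Gagliardo double integral. My approach is to identify the Gagliardo part with the pairing $\langle(-\triangle)^{\frac{1}{4}}\cdot,(-\triangle)^{\frac{1}{4}}\cdot\rangle_{\mathcal{L}^{2}}$, using the weak-derivative definition $D^{\frac{1}{2}}=(-\triangle)^{\frac{1}{4}}$ together with the principal-value formula that appears in the proof of Proposition~4. After that identification:
\begin{itemize}
\item the derivative part of the pairing is $\langle D^{\frac{1}{2}}[u]_{f},D^{\frac{1}{2}}[u]_{g}\rangle=\langle D^{\frac{1}{2}}[u]_{f},0\rangle=0$;
\item the $\mathcal{L}^{2}$ part is $\langle [u]_{f},[u]_{g}\rangle_{\mathcal{L}^{2}}$, and this is where the zero trace of $[u]_{f}$ is used.
\end{itemize}
For the $\mathcal{L}^{2}$ part I would imitate the argument of Proposition~4: move $D^{\frac{1}{2}}$ across the pairing by the integration-by-parts identity preceding Proposition~2. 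The boundary term vanishes because $\tau[u]_{f}=0$, and the remaining interior term contains $D^{\frac{1}{2}}[u]_{g}=0$.

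I expect this transfer to be the delicate point. The Gagliardo seminorm on a bounded $\Omega$ does not coincide exactly with the $\mathcal{L}^{2}$ norm of $(-\triangle)^{\frac{1}{4}}$; they differ by a boundary, regional correction. I would try to absorb that correction using $[u]_{f}\in W_{0}^{\frac{1}{2},2}(\Omega)$ and extension by zero. If that fails, I would take as the working inner product the equivalent one, $\langle F,G\rangle_{\mathcal{L}^{2}}+\langle D^{\frac{1}{2}}F,D^{\frac{1}{2}}G\rangle_{\mathcal{L}^{2}}$.

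The last step is routine Hilbert-space algebra once (i) holds. Expanding $\langle [u]_{f}+[u]_{g},[u]_{f}+[u]_{g}\rangle$ and dropping the cross terms gives the Pythagorean identity (ii). For (iii), $\langle u,[u]_{f}\rangle=\|[u]_{f}\|^{2}+\langle [u]_{g},[u]_{f}\rangle=\|[u]_{f}\|^{2}$, and (iv) follows symmetrically. This justifies writing $u=[u]_{f}\uplus[u]_{g}$.
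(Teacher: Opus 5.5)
The gap is in the $\mathcal{L}^{2}$ part of the cross term, where you place $[u]_{f}$ in the wrong subspace. Zero trace puts $[u]_{f}$ (at best) in $W_{0}^{\frac{1}{2},2}(\Omega)$. The summand of (\ref{decomp1}) that is orthogonal to $A^{\frac{1}{2},2}(\Omega)$ is the range $D^{\frac{1}{2}}(W_{0}^{\frac{1}{2},2}(\Omega))$, not $W_{0}^{\frac{1}{2},2}(\Omega)$. Zero trace plus monogenicity of the partner does not give orthogonality. With $g\equiv c$ you get $[u]_{g}=c$ by Remark 2, while any nonnegative, nonzero $w\in C_{0}^{\infty}(\Omega)$ has $\langle w,c\rangle_{W^{\frac{1}{2},2}(\Omega)}=c\int_{\Omega}w\,d\Omega_{x}\neq0$, since the double integral vanishes for a constant.

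Your integration by parts cannot repair this, because $\langle [u]_{f},[u]_{g}\rangle_{\mathcal{L}^{2}(\Omega)}$ contains no $D^{\frac{1}{2}}$ to move. Suppose you pair $[u]_{g}$ against $[u]_{f}$ in the identity preceding Proposition 2, and kill the boundary term with $\tau[u]_{f}=0$ and the interior term with $D^{\frac{1}{2}}[u]_{g}=0$. What comes out is $\int_{\Omega}[u]_{g}\,D^{\frac{1}{2}}[u]_{f}\,d\Omega=0$, i.e.\ $[u]_{g}\perp f$ up to sign, which is not (i). The argument of Proposition 4 you want to imitate works only because there $h=D^{\frac{1}{2}}\eta$ with $\eta\in W_{0}^{\frac{1}{2},2}(\Omega)$.

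That potential is the missing idea, and it is the core of the paper's proof. You need $\xi$ with $D^{\frac{1}{2}}\xi=[u]_{f}$ and $\tau\xi=0$; the paper takes $\xi=-\zeta_{\Omega}^{\frac{1}{2}}\zeta_{\Omega}^{\frac{1}{2}}f=\zeta_{\Omega}^{\frac{1}{2}}[u]_{f}$, so that $[u]_{f}\in D^{\frac{1}{2}}(W_{0}^{\frac{1}{2},2}(\Omega))$. Then $\langle [u]_{f},[u]_{g}\rangle_{\mathcal{L}^{2}(\Omega)}=\langle D^{\frac{1}{2}}\xi,[u]_{g}\rangle_{\mathcal{L}^{2}(\Omega)}$. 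Its boundary term vanishes because $\tau\xi=0$ (not because $\tau[u]_{f}=0$), and its interior term vanishes because $D^{\frac{1}{2}}[u]_{g}=0$.

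Your concern about the inner product is legitimate, but the fallback does not prove the statement. Properties (i)--(iv) are asserted for (\ref{fracinner}); switching to $\langle F,G\rangle_{\mathcal{L}^{2}}+\langle D^{\frac{1}{2}}F,D^{\frac{1}{2}}G\rangle_{\mathcal{L}^{2}}$ proves them for a different inner product. As you note, on a bounded domain the double integral in (\ref{fracinner}) does not coincide with $\Vert(-\triangle)^{\frac{1}{4}}\cdot\Vert_{\mathcal{L}^{2}(\Omega)}^{2}$. So even the derivative half of your cross-term computation is unverified for (\ref{fracinner}).

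The paper does not resolve this either: it simply asserts $W^{\frac{1}{2},2}(\Omega)=A^{\frac{1}{2},2}(\Omega)\oplus D^{\frac{1}{2}}(W_{0}^{\frac{1}{2},2}(\Omega))$ and reads off (i). In either version, though, showing $[u]_{f}$ lies in the range through $\xi$ is the step you cannot skip. Once (i) is available, your derivation of (ii)--(iv) is correct.
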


\begin{proof}
For two functions $f,$ $g\in C^{1}\left(  \Omega\right)  $, integration by
parts provide%

\[%
%TCIMACRO{\dint \limits_{\Omega}}%
%BeginExpansion
{\displaystyle\int\limits_{\Omega}}
%EndExpansion
f\left(  y-x\right)  D_{y}^{\frac{1}{2}}g(y)d\Omega_{y}=%
%TCIMACRO{\dint \limits_{\partial\Omega}}%
%BeginExpansion
{\displaystyle\int\limits_{\partial\Omega}}
%EndExpansion
f\left(  y-x\right)  \upsilon_{\frac{1}{2}}\left(  y\right)  g\left(
y\right)  d\partial\Omega_{y}-%
%TCIMACRO{\dint \limits_{\Omega}}%
%BeginExpansion
{\displaystyle\int\limits_{\Omega}}
%EndExpansion
D_{y}^{\frac{1}{2}}f\left(  y-x\right)  g\left(  y\right)  d\Omega_{y}.
\]
Then taking $f=\Gamma_{\frac{1}{2}}$ and $g=u$, we have
\[%
%TCIMACRO{\dint \limits_{\Omega}}%
%BeginExpansion
{\displaystyle\int\limits_{\Omega}}
%EndExpansion
\Gamma_{\frac{1}{2}}\left(  y-x\right)  D_{y}^{\frac{1}{2}}u(y)d\Omega_{y}=%
%TCIMACRO{\dint \limits_{\partial\Omega}}%
%BeginExpansion
{\displaystyle\int\limits_{\partial\Omega}}
%EndExpansion
\Gamma_{\frac{1}{2}}\left(  y-x\right)  \upsilon_{\frac{1}{2}}\left(
y\right)  u\left(  y\right)  d\partial\Omega_{y}-%
%TCIMACRO{\dint \limits_{\Omega}}%
%BeginExpansion
{\displaystyle\int\limits_{\Omega}}
%EndExpansion
D_{y}^{\frac{1}{2}}\Gamma_{\frac{1}{2}}\left(  y-x\right)  u\left(  y\right)
d\Omega_{y}%
\]
where $\Gamma_{\frac{1}{2}}$ is the fundamental solution to the fractional
half Dirac operator $D^{\frac{1}{2}}$.

But
\[
D_{y}^{\frac{1}{2}}\Gamma_{\frac{1}{2}}\left(  y-x\right)  =\delta\left(
y-x\right)
\]
the Kronecker delta function and thus, \ %

\begin{align*}%
%TCIMACRO{\dint \limits_{\Omega}}%
%BeginExpansion
{\displaystyle\int\limits_{\Omega}}
%EndExpansion
\Gamma_{\frac{1}{2}}\left(  y-x\right)  D^{\frac{1}{2}}u(y)d\Omega_{y}  & =%
%TCIMACRO{\dint \limits_{\partial\Omega}}%
%BeginExpansion
{\displaystyle\int\limits_{\partial\Omega}}
%EndExpansion
\Gamma_{\frac{1}{2}}\left(  y-x\right)  \upsilon_{\frac{1}{2}}\left(
y\right)  u\left(  y\right)  d\partial\Omega_{y}-%
%TCIMACRO{\dint \limits_{\Omega}}%
%BeginExpansion
{\displaystyle\int\limits_{\Omega}}
%EndExpansion
\delta\left(  y-x\right)  u\left(  y\right)  d\Omega_{y}\\
& =%
%TCIMACRO{\dint \limits_{\partial\Omega}}%
%BeginExpansion
{\displaystyle\int\limits_{\partial\Omega}}
%EndExpansion
\Gamma_{\frac{1}{2}}\left(  y-x\right)  \upsilon_{\frac{1}{2}}\left(
y\right)  u\left(  y\right)  d\partial\Omega_{y}-u(x)
\end{align*}

i.e.,
\begin{equation}
u\left(  x\right)  =%
%TCIMACRO{\dint \limits_{\partial\Omega}}%
%BeginExpansion
{\displaystyle\int\limits_{\partial\Omega}}
%EndExpansion
\Gamma_{\frac{1}{2}}\left(  y-x\right)  \upsilon_{\frac{1}{2}}\left(
y\right)  u\left(  y\right)  d\partial\Omega_{y}-%
%TCIMACRO{\dint \limits_{\Omega}}%
%BeginExpansion
{\displaystyle\int\limits_{\Omega}}
%EndExpansion
\Gamma_{\frac{1}{2}}\left(  y-x\right)  D^{\frac{1}{2}}u(y)d\Omega
_{y}\label{borel-pompeiu}%
\end{equation}
which provides the integral representation of the solution $u$ to the BVP
given by
\[
u\left(  x\right)  =%
%TCIMACRO{\dint \limits_{\partial\Omega}}%
%BeginExpansion
{\displaystyle\int\limits_{\partial\Omega}}
%EndExpansion
\Gamma_{\frac{1}{2}}\left(  y-x\right)  \upsilon_{\frac{1}{2}}\left(
y\right)  g(y)d\partial\Omega_{y}+\left(  -%
%TCIMACRO{\dint \limits_{\Omega}}%
%BeginExpansion
{\displaystyle\int\limits_{\Omega}}
%EndExpansion
\Gamma_{\frac{1}{2}}\left(  y-x\right)  f(y)d\Omega_{y}\right)  .
\]
We will see that this sum is in fact an orthogonal sum $\uplus$. \ 

Because the solution
\[
u\in W^{\frac{1}{2},2}\left(  \Omega\right)  =A^{\frac{1}{2},2}\left(
\Omega\right)  \oplus D^{\frac{1}{2}}\left(  W_{0}^{\frac{1}{2},2}\left(
\Omega\right)  \right)
\]
has a unique decomposition as sum of components from the two sub spaces,
$A^{\frac{1}{2},2}\left(  \Omega\right)  $ and $D^{\frac{1}{2}}\left(
W_{0}^{\frac{3}{2},2}\left(  \Omega\right)  \right)  $ and the first integral
\[%
%TCIMACRO{\dint \limits_{\partial\Omega}}%
%BeginExpansion
{\displaystyle\int\limits_{\partial\Omega}}
%EndExpansion
\Gamma_{\frac{1}{2}}\left(  y-x\right)  \upsilon_{\frac{1}{2}}\left(
y\right)  g(y)d\partial\Omega_{y}%
\]
is monogenic over $\Omega$ and hence an element of
\[
KerD^{\frac{1}{2}}\cap%
%TCIMACRO{\tciLaplace}%
%BeginExpansion
\mathcal{L}%
%EndExpansion
^{2}\left(  \Omega\right)  =A^{\frac{1}{2},2}\left(  \Omega\right)  .
\]
We need to verify that the second integral%

\[
\left(  -%
%TCIMACRO{\dint \limits_{\Omega}}%
%BeginExpansion
{\displaystyle\int\limits_{\Omega}}
%EndExpansion
\Gamma_{\frac{1}{2}}\left(  y-x\right)  f(y)d\Omega_{y}\right)  \in
D^{\frac{1}{2}}\left(  W_{0}^{\frac{1}{2},2}\left(  \Omega\right)  \right)
\]
as well. That is, there is a function $\xi\in W_{0}^{\frac{3}{2},2}\left(
\Omega\right)  $ so that%

\[
-%
%TCIMACRO{\dint \limits_{\Omega}}%
%BeginExpansion
{\displaystyle\int\limits_{\Omega}}
%EndExpansion
\Gamma_{\frac{1}{2}}\left(  y-x\right)  f(y)d\Omega_{y}=D^{\frac{1}{2}}%
\xi\left(  x\right)  \text{ \ \ with \ \ }\xi_{\mid\partial\Omega}=0.
\]
Clearly from the fact that \ \ %

\[
u\left(  x\right)  =%
%TCIMACRO{\dint \limits_{\partial\Omega}}%
%BeginExpansion
{\displaystyle\int\limits_{\partial\Omega}}
%EndExpansion
\Gamma_{\frac{1}{2}}\left(  y-x\right)  \upsilon_{\frac{1}{2}}\left(
y\right)  g(y)d\partial\Omega_{y}+\left(  -%
%TCIMACRO{\dint \limits_{\Omega}}%
%BeginExpansion
{\displaystyle\int\limits_{\Omega}}
%EndExpansion
\Gamma_{\frac{1}{2}}\left(  y-x\right)  f(y)d\Omega_{y}\right)  \text{ \ and
\ }u_{\mid\partial\Omega}=g
\]
we have, the trace $\tau$ of $u$ on $\partial\Omega$%

\begin{align*}
\tau u_{\mid\partial\Omega}  & =\tau\left(
%TCIMACRO{\dint \limits_{\partial\Omega}}%
%BeginExpansion
{\displaystyle\int\limits_{\partial\Omega}}
%EndExpansion
\Gamma_{\frac{1}{2}}\left(  y-x\right)  \upsilon_{\frac{1}{2}}\left(
y\right)  g(y)d\partial\Omega_{y}+\left(  -%
%TCIMACRO{\dint \limits_{\Omega}}%
%BeginExpansion
{\displaystyle\int\limits_{\Omega}}
%EndExpansion
\Gamma_{\frac{1}{2}}\left(  y-x\right)  f(y)d\Omega_{y}\right)  \right)
_{\mid\partial\Omega}\\
& =\tau\left(
%TCIMACRO{\dint \limits_{\partial\Omega}}%
%BeginExpansion
{\displaystyle\int\limits_{\partial\Omega}}
%EndExpansion
\Gamma_{\frac{1}{2}}\left(  y-x\right)  \upsilon_{\frac{1}{2}}\left(
y\right)  g(y)d\partial\Omega_{y}\right)  _{\mid\partial\Omega}+\tau\left(  -%
%TCIMACRO{\dint \limits_{\Omega}}%
%BeginExpansion
{\displaystyle\int\limits_{\Omega}}
%EndExpansion
\Gamma_{\frac{1}{2}}\left(  y-x\right)  f(y)d\Omega_{y}\right)  _{\mid
\partial\Omega}\\
& =g
\end{align*}

\[
\Longrightarrow\text{ \ \ \ \ }\tau\left(  -%
%TCIMACRO{\dint \limits_{\Omega}}%
%BeginExpansion
{\displaystyle\int\limits_{\Omega}}
%EndExpansion
\Gamma_{\frac{1}{2}}\left(  y-x\right)  f(y)d\Omega_{y}\right)  =\left(  -%
%TCIMACRO{\dint \limits_{\Omega}}%
%BeginExpansion
{\displaystyle\int\limits_{\Omega}}
%EndExpansion
\Gamma_{\frac{1}{2}}\left(  y-x\right)  f(y)d\Omega_{y}\right)  _{\mid
\partial\Omega}=0\text{.}%
\]

\ \ 

Thus $\xi_{\mid\partial\Omega}=0$. In an analogous manner of the integral
representation of the solution $u$, we get the integral representation of
$\xi$ as

\
\begin{align*}
\xi\left(  x\right)   & =-%
%TCIMACRO{\dint \limits_{\Omega}}%
%BeginExpansion
{\displaystyle\int\limits_{\Omega}}
%EndExpansion
\Gamma_{\frac{1}{2}}\left(  z-x\right)
%TCIMACRO{\dint \limits_{\Omega}}%
%BeginExpansion
{\displaystyle\int\limits_{\Omega}}
%EndExpansion
\Gamma_{\frac{1}{2}}\left(  y-z\right)  f(y)d\Omega_{y}d\Omega_{z}\\
& =-%
%TCIMACRO{\dint \limits_{\Omega}}%
%BeginExpansion
{\displaystyle\int\limits_{\Omega}}
%EndExpansion%
%TCIMACRO{\dint \limits_{\Omega}}%
%BeginExpansion
{\displaystyle\int\limits_{\Omega}}
%EndExpansion
\Gamma_{\frac{1}{2}}\left(  z-x\right)  \Gamma_{\frac{1}{2}}\left(
y-z\right)  f(y)d\Omega_{y}d\Omega_{z}\in W_{0}^{1,2}\left(  \Omega\right)
\text{.}%
\end{align*}
Therefore, there is a \ %

\[
\xi\left(  x\right)  =-%
%TCIMACRO{\dint \limits_{\Omega}}%
%BeginExpansion
{\displaystyle\int\limits_{\Omega}}
%EndExpansion%
%TCIMACRO{\dint \limits_{\Omega}}%
%BeginExpansion
{\displaystyle\int\limits_{\Omega}}
%EndExpansion
\Gamma_{\frac{1}{2}}\left(  z-x\right)  \Gamma_{\frac{1}{2}}\left(
y-z\right)  f(y)d\Omega_{y}d\Omega_{z}\in W_{0}^{1,2}\left(  \Omega\right)
\]
so that \ \ \ %

\begin{align*}
D_{x}^{\frac{1}{2}}\xi\left(  x\right)   & =D_{x}^{\frac{1}{2}}\left(  -%
%TCIMACRO{\dint \limits_{\Omega}}%
%BeginExpansion
{\displaystyle\int\limits_{\Omega}}
%EndExpansion%
%TCIMACRO{\dint \limits_{\Omega}}%
%BeginExpansion
{\displaystyle\int\limits_{\Omega}}
%EndExpansion
\Gamma_{\frac{1}{2}}\left(  z-x\right)  \Gamma_{\frac{1}{2}}\left(
y-z\right)  f(y)d\Omega_{y}d\Omega_{z}\right) \\
& =-%
%TCIMACRO{\dint \limits_{\Omega}}%
%BeginExpansion
{\displaystyle\int\limits_{\Omega}}
%EndExpansion
\Gamma_{\frac{1}{2}}\left(  y-x\right)  f(y)d\Omega_{y}\text{.}%
\end{align*}
Hence \ %

\begin{align*}
u(x)  & =%
%TCIMACRO{\dint \limits_{\partial\Omega}}%
%BeginExpansion
{\displaystyle\int\limits_{\partial\Omega}}
%EndExpansion
\Gamma_{\frac{1}{2}}\left(  y-x\right)  \upsilon_{\frac{1}{2}}\left(
y\right)  g(y)d\partial\Omega_{y}\uplus\left(  -%
%TCIMACRO{\dint \limits_{\Omega}}%
%BeginExpansion
{\displaystyle\int\limits_{\Omega}}
%EndExpansion
\Gamma_{\frac{1}{2}}\left(  y-x\right)  f(y)d\Omega_{y}\right) \\
& =\left[  u\right]  _{g}\uplus\left[  u\right]  _{f}%
\end{align*}
with%

\[
\left[  u\right]  _{g}=%
%TCIMACRO{\dint \limits_{\partial\Omega}}%
%BeginExpansion
{\displaystyle\int\limits_{\partial\Omega}}
%EndExpansion
\Gamma_{\frac{1}{2}}\left(  y-x\right)  \upsilon_{\frac{1}{2}}\left(
y\right)  g(y)d\partial\Omega_{y}\text{ \ \ and \ \ }\left[  u\right]  _{f}=-%
%TCIMACRO{\dint \limits_{\Omega}}%
%BeginExpansion
{\displaystyle\int\limits_{\Omega}}
%EndExpansion
\Gamma_{\frac{1}{2}}\left(  y-x\right)  f(y)d\Omega_{y}\text{.}%
\]

\ 

$\left(  i\right)  $ Then since $W^{\frac{1}{2},2}\left(  \Omega\right)  $ is
an inner product space of an orthogonal decomposition we have%

\[
\left\langle \left[  u\right]  _{g},\left[  u\right]  _{f}\right\rangle
=\left\langle
%TCIMACRO{\dint \limits_{\partial\Omega}}%
%BeginExpansion
{\displaystyle\int\limits_{\partial\Omega}}
%EndExpansion
\Gamma_{\frac{1}{2}}\left(  y-x\right)  \upsilon_{\frac{1}{2}}\left(
y\right)  g(y)d\partial\Omega_{y},-%
%TCIMACRO{\dint \limits_{\Omega}}%
%BeginExpansion
{\displaystyle\int\limits_{\Omega}}
%EndExpansion
\Gamma_{\frac{1}{2}}\left(  y-x\right)  f(y)d\Omega_{y}\right\rangle
_{W^{\frac{1}{2},2}\left(  \Omega\right)  }=0\text{.}%
\]

\ 

$\left(  ii\right)  $ \ From the fact that the sum is an orthogonal sum, the
components obey the parallelogram law

\
\begin{align*}
& \parallel u\parallel_{W^{\frac{1}{2},2}\left(  \Omega\right)  }%
^{2}=\parallel%
%TCIMACRO{\dint \limits_{\partial\Omega}}%
%BeginExpansion
{\displaystyle\int\limits_{\partial\Omega}}
%EndExpansion
\Gamma_{\frac{1}{2}}\left(  y-x\right)  \upsilon_{\frac{1}{2}}\left(
y\right)  g(y)d\partial\Omega_{y}\parallel_{W^{\frac{1}{2},2}\left(
\Omega\right)  }^{2}+\parallel%
%TCIMACRO{\dint \limits_{\Omega}}%
%BeginExpansion
{\displaystyle\int\limits_{\Omega}}
%EndExpansion
\Gamma_{\frac{1}{2}}\left(  y-x\right)  f(y)d\Omega_{y}\parallel_{W^{\frac
{1}{2},2}\left(  \Omega\right)  }^{2}\\
& =\parallel\left[  u\right]  _{g}\parallel_{W^{\frac{1}{2},2}\left(
\Omega\right)  }^{2}+\parallel\left[  u\right]  _{f}\parallel_{W^{\frac{1}%
{2},2}\left(  \Omega\right)  }^{2}%
\end{align*}

$\left(  iii\right)  $ \ The third result follows from the fact that
\begin{align*}
\left\langle u,\text{ }\left[  u\right]  _{f}\right\rangle _{W^{\frac{1}{2}%
,2}\left(  \Omega\right)  }  & =\left\langle
%TCIMACRO{\dint \limits_{\partial\Omega}}%
%BeginExpansion
{\displaystyle\int\limits_{\partial\Omega}}
%EndExpansion
\Gamma_{\frac{1}{2}}\left(  y-x\right)  \upsilon_{\frac{1}{2}}\left(
y\right)  g(y)d\partial\Omega_{y}+\left(  -%
%TCIMACRO{\dint \limits_{\Omega}}%
%BeginExpansion
{\displaystyle\int\limits_{\Omega}}
%EndExpansion
\Gamma_{\frac{1}{2}}\left(  y-x\right)  f(y)d\Omega_{y}\right)  ,-%
%TCIMACRO{\dint \limits_{\Omega}}%
%BeginExpansion
{\displaystyle\int\limits_{\Omega}}
%EndExpansion
\Gamma_{\frac{1}{2}}\left(  y-x\right)  f(y)d\Omega_{y}\right\rangle
_{W^{\frac{1}{2},2}\left(  \Omega\right)  }\\
& =\left\langle -%
%TCIMACRO{\dint \limits_{\Omega}}%
%BeginExpansion
{\displaystyle\int\limits_{\Omega}}
%EndExpansion
\Gamma_{\frac{1}{2}}\left(  y-x\right)  f(y)d\Omega_{y},\text{ }-%
%TCIMACRO{\dint \limits_{\Omega}}%
%BeginExpansion
{\displaystyle\int\limits_{\Omega}}
%EndExpansion
\Gamma_{\frac{1}{2}}\left(  y-x\right)  f(y)d\Omega_{y}\right\rangle
_{W^{\frac{1}{2},2}\left(  \Omega\right)  }\\
& =\parallel-%
%TCIMACRO{\dint \limits_{\Omega}}%
%BeginExpansion
{\displaystyle\int\limits_{\Omega}}
%EndExpansion
\Gamma_{\frac{1}{2}}\left(  y-x\right)  f(y)d\Omega_{y}\parallel_{W^{\frac
{1}{2},2}\left(  \Omega\right)  }^{2}%
\end{align*}
and likewise $\left(  iv\right)  $ follows from
\begin{align*}
\left\langle u,\text{ }\left[  u\right]  _{g}\right\rangle _{W^{\frac{1}{2}%
,2}\left(  \Omega\right)  }  & =\text{ }\left\langle
%TCIMACRO{\dint \limits_{\partial\Omega}}%
%BeginExpansion
{\displaystyle\int\limits_{\partial\Omega}}
%EndExpansion
\Gamma_{\frac{1}{2}}\left(  y-x\right)  \upsilon_{\frac{1}{2}}\left(
y\right)  g(y)d\partial\Omega_{y},\text{ }%
%TCIMACRO{\dint \limits_{\partial\Omega}}%
%BeginExpansion
{\displaystyle\int\limits_{\partial\Omega}}
%EndExpansion
\Gamma_{\frac{1}{2}}\left(  y-x\right)  \upsilon_{\frac{1}{2}}\left(
y\right)  g(y)d\partial\Omega_{y}\right\rangle _{W^{\frac{1}{2},2}\left(
\Omega\right)  }\\
& =\text{ }\parallel%
%TCIMACRO{\dint \limits_{\partial\Omega}}%
%BeginExpansion
{\displaystyle\int\limits_{\partial\Omega}}
%EndExpansion
\Gamma_{\frac{1}{2}}\left(  y-x\right)  \upsilon_{\frac{1}{2}}\left(
y\right)  g(y)d\partial\Omega_{y}\parallel_{W^{\frac{1}{2},2}\left(
\Omega\right)  }^{2}%
\end{align*}
.
\end{proof}

The next result is for second order BVP.

\begin{proposition}
Let $f\in%
%TCIMACRO{\tciLaplace}%
%BeginExpansion
\mathcal{L}%
%EndExpansion
^{2}\left(  \Omega\right)  $ and $g_{1}\in W^{\frac{3}{2},2}\left(
\partial\Omega\right)  $,$g_{2}\in W^{\frac{1}{2},2}\left(  \partial
\Omega\right)  $, then the second order BVP interns of $D^{\frac{1}{2}}$
\begin{equation}
\left\{
\begin{array}
[c]{cc}%
\left(  D^{\frac{1}{2}}\right)  ^{2}u=f & \text{in }\Omega\\
u=\left(  g_{1},\text{ }g_{2}\right)  & \text{on }\partial\Omega
\end{array}
\right. \label{secondbvp}%
\end{equation}
with
\[
g_{1}=\tau u_{\mid\partial\Omega}\text{ \ and \ }g_{2}=\tau D^{\frac{1}{2}%
}u_{\mid\partial\Omega}%
\]
has a solution $u\in W^{1,2}\left(  \Omega\right)  $ given by%
\[
u\left(  x\right)  =%
%TCIMACRO{\dint \limits_{\partial\Omega}}%
%BeginExpansion
{\displaystyle\int\limits_{\partial\Omega}}
%EndExpansion
\Gamma_{\frac{1}{2}}\left(  y-x\right)  \upsilon_{\frac{1}{2}}\left(
y\right)  g_{1}\left(  y\right)  d\partial\Omega_{y}-%
%TCIMACRO{\dint \limits_{\Omega}}%
%BeginExpansion
{\displaystyle\int\limits_{\Omega}}
%EndExpansion%
%TCIMACRO{\dint \limits_{\partial\Omega}}%
%BeginExpansion
{\displaystyle\int\limits_{\partial\Omega}}
%EndExpansion
\Gamma_{\frac{1}{2}}\left(  z-x\right)  \Gamma_{\frac{1}{2}}\left(
y-z\right)  \upsilon_{\frac{1}{2}}\left(  z\right)  g_{2}\left(  z\right)
d\partial\Omega_{z}d\Omega_{y}+%
%TCIMACRO{\dint \limits_{\Omega}}%
%BeginExpansion
{\displaystyle\int\limits_{\Omega}}
%EndExpansion%
%TCIMACRO{\dint \limits_{\Omega}}%
%BeginExpansion
{\displaystyle\int\limits_{\Omega}}
%EndExpansion
\Gamma_{\frac{1}{2}}\left(  z-x\right)  \Gamma_{\frac{1}{2}}\left(
y-z\right)  f\left(  z\right)  d\Omega_{z}d\Omega_{y}%
\]
with the following properties

$\left(  i\right)  $
\begin{align*}
u\left(  x\right)   & =\left(
%TCIMACRO{\dint \limits_{\partial\Omega}}%
%BeginExpansion
{\displaystyle\int\limits_{\partial\Omega}}
%EndExpansion
\Gamma_{\frac{1}{2}}\left(  y-x\right)  \upsilon_{\frac{1}{2}}\left(
y\right)  g_{1}\left(  y\right)  d\partial\Omega_{y}-%
%TCIMACRO{\dint \limits_{\Omega}}%
%BeginExpansion
{\displaystyle\int\limits_{\Omega}}
%EndExpansion%
%TCIMACRO{\dint \limits_{\partial\Omega}}%
%BeginExpansion
{\displaystyle\int\limits_{\partial\Omega}}
%EndExpansion
\Gamma_{\frac{1}{2}}\left(  z-x\right)  \Gamma_{\frac{1}{2}}\left(
y-z\right)  \upsilon_{\frac{1}{2}}\left(  z\right)  g_{2}\left(  z\right)
d\partial\Omega_{z}d\Omega_{y}\right)  \uplus\left(
%TCIMACRO{\dint \limits_{\Omega}}%
%BeginExpansion
{\displaystyle\int\limits_{\Omega}}
%EndExpansion%
%TCIMACRO{\dint \limits_{\Omega}}%
%BeginExpansion
{\displaystyle\int\limits_{\Omega}}
%EndExpansion
\Gamma_{\frac{1}{2}}\left(  z-x\right)  \Gamma_{\frac{1}{2}}\left(
y-z\right)  f\left(  z\right)  d\Omega_{z}d\Omega_{y}\right) \\
& =\left[  u\right]  _{g_{1},g_{2}}\uplus\left[  u\right]  _{f}.
\end{align*}
$\left(  ii\right)  $%
\begin{align*}
& \parallel u\parallel_{W^{1,2}\left(  \Omega\right)  }^{2}=\parallel\left(
%TCIMACRO{\dint \limits_{\partial\Omega}}%
%BeginExpansion
{\displaystyle\int\limits_{\partial\Omega}}
%EndExpansion
\Gamma_{\frac{1}{2}}\left(  y-x\right)  \upsilon_{\frac{1}{2}}\left(
y\right)  g_{1}\left(  y\right)  d\partial\Omega_{y}-%
%TCIMACRO{\dint \limits_{\Omega}}%
%BeginExpansion
{\displaystyle\int\limits_{\Omega}}
%EndExpansion%
%TCIMACRO{\dint \limits_{\partial\Omega}}%
%BeginExpansion
{\displaystyle\int\limits_{\partial\Omega}}
%EndExpansion
\Gamma_{\frac{1}{2}}\left(  z-x\right)  \Gamma_{\frac{1}{2}}\left(
y-z\right)  \upsilon_{\frac{1}{2}}\left(  z\right)  g_{2}\left(  z\right)
d\partial\Omega_{z}d\Omega_{y}\right)  \parallel_{W^{1,2}\left(
\Omega\right)  }^{2}\\
+  & \parallel%
%TCIMACRO{\dint \limits_{\Omega}}%
%BeginExpansion
{\displaystyle\int\limits_{\Omega}}
%EndExpansion%
%TCIMACRO{\dint \limits_{\Omega}}%
%BeginExpansion
{\displaystyle\int\limits_{\Omega}}
%EndExpansion
\Gamma_{\frac{1}{2}}\left(  z-x\right)  \Gamma_{\frac{1}{2}}\left(
y-z\right)  f\left(  z\right)  d\Omega_{z}d\Omega_{y}\parallel_{W^{1,2}\left(
\Omega\right)  }^{2}\\
& =\parallel\left[  u\right]  _{g_{1},g_{2}}\parallel_{W^{1,2}\left(
\Omega\right)  }^{2}+\parallel\left[  u\right]  _{f}\parallel_{W^{1,2}\left(
\Omega\right)  }^{2}.
\end{align*}

\end{proposition}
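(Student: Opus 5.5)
The plan is to mirror the proof of the first order Cauchy problem, iterating the Borel--Pompeiu type representation (\ref{borel-pompeiu}) twice. First I apply (\ref{borel-pompeiu}) to $u$ itself, with $\tau u=g_{1}$:
\[
u(x)=\zeta_{\partial\Omega}^{\frac{1}{2}}(g_{1})(x)-\zeta_{\Omega}^{\frac{1}{2}}\left(D^{\frac{1}{2}}u\right)(x).
\]
Then I apply (\ref{borel-pompeiu}) again to $w:=D^{\frac{1}{2}}u$. By (\ref{secondbvp}), $\tau w=g_{2}$ and $D^{\frac{1}{2}}w=\left(D^{\frac{1}{2}}\right)^{2}u=f$, so
\[
w=\zeta_{\partial\Omega}^{\frac{1}{2}}(g_{2})-\zeta_{\Omega}^{\frac{1}{2}}(f).
\]
Substituting $w$ into the first identity gives
\[
u=\zeta_{\partial\Omega}^{\frac{1}{2}}g_{1}-\zeta_{\Omega}^{\frac{1}{2}}\zeta_{\partial\Omega}^{\frac{1}{2}}g_{2}+\zeta_{\Omega}^{\frac{1}{2}}\zeta_{\Omega}^{\frac{1}{2}}f .
\]
This is the stated integral formula once the iterated kernels are written out as in the definition of the Teodorescu and Feuter type transforms. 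Conversely, using $D^{\frac{1}{2}}\zeta_{\Omega}^{\frac{1}{2}}=I$ and $D^{\frac{1}{2}}\zeta_{\partial\Omega}^{\frac{1}{2}}=0$ (the lemma on $\frac{1}{2}$-monogenicity), two applications of $D^{\frac{1}{2}}$ return $f$. So the formula indeed solves the equation in $\Omega$.

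For the orthogonal splitting I would use the analogue of the decomposition (\ref{decomp1}) one level up:
\[
W^{1,2}(\Omega)=\ker\left(D^{\frac{1}{2}}\right)^{2}\cap W^{1,2}(\Omega)\oplus\left(D^{\frac{1}{2}}\right)^{2}\left(W_{0}^{1,2}(\Omega)\right),
\]
with $W_{0}^{1,2}$ meaning vanishing traces of $\xi$ and $D^{\frac{1}{2}}\xi$. The boundary part $[u]_{g_{1},g_{2}}$ is annihilated by $\left(D^{\frac{1}{2}}\right)^{2}$. Indeed, $D^{\frac{1}{2}}$ kills $\zeta_{\partial\Omega}^{\frac{1}{2}}g_{1}$ and sends $\zeta_{\Omega}^{\frac{1}{2}}\zeta_{\partial\Omega}^{\frac{1}{2}}g_{2}$ to $\zeta_{\partial\Omega}^{\frac{1}{2}}g_{2}$, which is killed by a second application. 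So $[u]_{g_{1},g_{2}}$ lies in the first summand.

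For $[u]_{f}=\zeta_{\Omega}^{\frac{1}{2}}\zeta_{\Omega}^{\frac{1}{2}}f$ I would exhibit $\xi=\left(\zeta_{\Omega}^{\frac{1}{2}}\right)^{4}f$, so that $\left(D^{\frac{1}{2}}\right)^{2}\xi=[u]_{f}$. I then need to check that the traces of $\xi$ and $D^{\frac{1}{2}}\xi$ vanish. I would argue this exactly as in the proof for the first order problem. The trace of $u$ equals $g_{1}$, and since the trace of $\zeta_{\partial\Omega}^{\frac{1}{2}}g_{1}$ is $g_{1}$, the remaining terms must have vanishing combined trace. Likewise the trace of $D^{\frac{1}{2}}u=\zeta_{\partial\Omega}^{\frac{1}{2}}g_{2}-\zeta_{\Omega}^{\frac{1}{2}}f$ equals $g_{2}$, which forces $\tau\zeta_{\Omega}^{\frac{1}{2}}f=0$. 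Iterating this bookkeeping gives the vanishing traces needed for $\xi$.

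Once $[u]_{g_{1},g_{2}}$ and $[u]_{f}$ sit in complementary orthogonal summands, (i) follows, and (ii) follows as the Pythagorean identity for $\langle\cdot,\cdot\rangle_{W^{1,2}(\Omega)}$ from (\ref{innerprdct1}).

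The main obstacle is the trace step. The trace of $\zeta_{\Omega}^{\frac{1}{2}}f$ does not vanish for arbitrary $f$; it vanishes only because $u$ is assumed to be a solution with data compatible with the traces. So I would state the result under the standing compatibility hypothesis implicit in (\ref{secondbvp}), namely that $g_{1},g_{2}$ are actual traces of $u$ and $D^{\frac{1}{2}}u$. A second, smaller issue is that the orthogonality in $W^{1,2}$ also requires the derivative terms in (\ref{innerprdct1}) to pair to zero. I would handle this by transferring $D^{\frac{1}{2}}$ across the inner product via integration by parts, using the vanishing traces of $\xi$.
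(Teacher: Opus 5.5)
Your route is the same as the paper's: iterate (\ref{borel-pompeiu}) through $w=D^{\frac{1}{2}}u$, invoke $W^{1,2}(\Omega)=A^{2,2}(\Omega)\oplus(D^{\frac{1}{2}})^{2}(W_{0}^{2,2}(\Omega))$, note that $(D^{\frac{1}{2}})^{2}$ annihilates $[u]_{g_{1},g_{2}}$, and take $\xi=(\zeta_{\Omega}^{\frac{1}{2}})^{4}f$. The genuine gap is the step you yourself single out, and your proposed remedy does not close it.

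You need $\tau\xi=0$ and $\tau D^{\frac{1}{2}}\xi=0$. These are the traces of $(\zeta_{\Omega}^{\frac{1}{2}})^{4}f$ and, up to sign, of $(\zeta_{\Omega}^{\frac{1}{2}})^{3}f$. Grant $\tau\zeta_{\partial\Omega}^{\frac{1}{2}}h=h$, which classically holds, by Plemelj--Sokhotski, only for boundary values of monogenic functions. Even then, the boundary data of $u$ and $D^{\frac{1}{2}}u$ give only $\tau\zeta_{\Omega}^{\frac{1}{2}}f=0$ and $\tau(\zeta_{\Omega}^{\frac{1}{2}}\zeta_{\Omega}^{\frac{1}{2}}f-\zeta_{\Omega}^{\frac{1}{2}}\zeta_{\partial\Omega}^{\frac{1}{2}}g_{2})=0$. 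They say nothing about the three- and fourfold iterates.

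``Iterating the bookkeeping'' would mean rerunning the first-order argument with right-hand sides $(\zeta_{\Omega}^{\frac{1}{2}})^{2}f$ and $(\zeta_{\Omega}^{\frac{1}{2}})^{3}f$. That is, it uses $\tau\zeta_{\Omega}^{\frac{1}{2}}h=0$ for those $h$, which is exactly what you concede fails for general $h$. A compatibility hypothesis on $(g_{1},g_{2})$ cannot help either, because $\xi$ depends on $f$ alone. What is needed is a condition on $f$. Applying (\ref{borel-pompeiu}) twice to any preimage with vanishing traces shows this condition is equivalent to $[u]_{f}\in(D^{\frac{1}{2}})^{2}(W_{0}^{2,2}(\Omega))$, which is the substance of (i).

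The paper does not supply the missing argument either. It declares $(\xi_{\mid\partial\Omega},D^{\frac{1}{2}}\xi_{\mid\partial\Omega})=(0,0)$ ``clearly'' and then derives the fourfold integral formula for $\xi$ from that assumption, which is circular. In the first-order theorem it likewise passes from $\tau\zeta_{\Omega}^{\frac{1}{2}}f=0$ to $\xi_{\mid\partial\Omega}=0$ for $\xi=-\zeta_{\Omega}^{\frac{1}{2}}\zeta_{\Omega}^{\frac{1}{2}}f$.

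The restriction on $f$ is real. In the classical planar analogue, take $\partial_{\bar{z}}$ on the unit disc with its Teodorescu transform $T$. Then $T^{k}1=\bar{z}^{k}/k!$, so for $f\equiv1$ neither $T^{4}f$ nor $T^{3}f$ has zero trace. You must therefore either add these trace conditions on $f$ as a hypothesis or find a genuinely fractional mechanism that forces them.

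A smaller point concerns signs. Applying $D^{\frac{1}{2}}$ to both sides of (\ref{borel-pompeiu}) shows it is consistent only with $D^{\frac{1}{2}}\zeta_{\Omega}^{\frac{1}{2}}=-I$. With the $+I$ you quote, your $w=\zeta_{\partial\Omega}^{\frac{1}{2}}g_{2}-\zeta_{\Omega}^{\frac{1}{2}}f$ satisfies $D^{\frac{1}{2}}w=-f$, and the final formula gives $D^{\frac{1}{2}}u=-w$. This is harmless for $(D^{\frac{1}{2}})^{2}u=f$ but flips the sign of the second boundary datum.
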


\begin{proof}
Clearly since the input function $f\in%
%TCIMACRO{\tciLaplace}%
%BeginExpansion
\mathcal{L}%
%EndExpansion
^{2}\left(  \Omega\right)  $, which is the weekly second order derivative of
the solution $u$, we have $u$ to be in $W^{1,2}\left(  \Omega\right)  $. This
is because the Dirac operator $D$ is a regularity exponent diminishing
operator, between Sobolev spaces.

The repeated application of the integral representation given in (
\ref{borel-pompeiu}\ ) will be used. Let $v\left(  x\right)  =D^{\frac{1}{2}%
}u(x)$, then we have a first order BVP%

\[
\left\{
\begin{array}
[c]{cc}%
-D^{\frac{1}{2}}v=f & \text{in }\Omega\\
v=g_{2} & \text{on }\partial\Omega
\end{array}
\right.
\]
whose solution is given by
\[
v(x)=%
%TCIMACRO{\dint \limits_{\partial\Omega}}%
%BeginExpansion
{\displaystyle\int\limits_{\partial\Omega}}
%EndExpansion
\Gamma_{\frac{1}{2}}\left(  y-x\right)  \upsilon_{\frac{1}{2}}\left(
y\right)  g_{2}(y)d\partial\Omega_{y}+\left(  -%
%TCIMACRO{\dint \limits_{\Omega}}%
%BeginExpansion
{\displaystyle\int\limits_{\Omega}}
%EndExpansion
\Gamma_{\frac{1}{2}}\left(  y-x\right)  f(y)d\Omega_{y}\right)  \text{.}%
\]

But $D^{\frac{1}{2}}u=v$ and hence we have again a first order BVP%

\[
\left\{
\begin{array}
[c]{cc}%
-D^{\frac{1}{2}}u=f & \text{in }\Omega\\
u=g_{1} & \text{on }\partial\Omega
\end{array}
\right.
\]
with a solution%

\begin{align*}
u\left(  x\right)   & =%
%TCIMACRO{\dint \limits_{\partial\Omega}}%
%BeginExpansion
{\displaystyle\int\limits_{\partial\Omega}}
%EndExpansion
\Gamma_{\frac{1}{2}}\left(  y-x\right)  \upsilon_{\frac{1}{2}}\left(
y\right)  u\left(  y\right)  d\partial\Omega_{y}-%
%TCIMACRO{\dint \limits_{\Omega}}%
%BeginExpansion
{\displaystyle\int\limits_{\Omega}}
%EndExpansion
\Gamma_{\frac{1}{2}}\left(  y-x\right)  D^{\frac{1}{2}}u(y)d\Omega_{y}\\
& =%
%TCIMACRO{\dint \limits_{\partial\Omega}}%
%BeginExpansion
{\displaystyle\int\limits_{\partial\Omega}}
%EndExpansion
\Gamma_{\frac{1}{2}}\left(  y-x\right)  \upsilon_{\frac{1}{2}}\left(
y\right)  g_{1}\left(  y\right)  d\partial\Omega_{y}-%
%TCIMACRO{\dint \limits_{\Omega}}%
%BeginExpansion
{\displaystyle\int\limits_{\Omega}}
%EndExpansion
\Gamma_{\frac{1}{2}}\left(  y-x\right)  \left(
\begin{array}
[c]{c}%
%TCIMACRO{\dint \limits_{\partial\Omega}}%
%BeginExpansion
{\displaystyle\int\limits_{\partial\Omega}}
%EndExpansion
\Gamma_{\frac{1}{2}}\left(  z-y\right)  \upsilon_{\frac{1}{2}}\left(
z\right)  g_{2}(z)d\partial\Omega_{z}\\
+\left(  -%
%TCIMACRO{\dint \limits_{\Omega}}%
%BeginExpansion
{\displaystyle\int\limits_{\Omega}}
%EndExpansion
\Gamma_{\frac{1}{2}}\left(  z-y\right)  f(z)d\Omega_{z}\right)
\end{array}
\right)  d\Omega_{y}\\
& =%
%TCIMACRO{\dint \limits_{\partial\Omega}}%
%BeginExpansion
{\displaystyle\int\limits_{\partial\Omega}}
%EndExpansion
\Gamma_{\frac{1}{2}}\left(  y-x\right)  \upsilon_{\frac{1}{2}}\left(
y\right)  g_{1}\left(  y\right)  d\partial\Omega_{y}-%
%TCIMACRO{\dint \limits_{\Omega}}%
%BeginExpansion
{\displaystyle\int\limits_{\Omega}}
%EndExpansion%
%TCIMACRO{\dint \limits_{\partial\Omega}}%
%BeginExpansion
{\displaystyle\int\limits_{\partial\Omega}}
%EndExpansion
\Gamma_{\frac{1}{2}}\left(  y-x\right)  \Gamma_{\frac{1}{2}}\left(
z-y\right)  \upsilon_{\frac{1}{2}}\left(  z\right)  g_{2}(z)d\partial
\Omega_{z}d\Omega_{y}\\
& +%
%TCIMACRO{\dint \limits_{\Omega}}%
%BeginExpansion
{\displaystyle\int\limits_{\Omega}}
%EndExpansion%
%TCIMACRO{\dint \limits_{\Omega}}%
%BeginExpansion
{\displaystyle\int\limits_{\Omega}}
%EndExpansion
\Gamma_{\frac{1}{2}}\left(  y-x\right)  \Gamma_{\frac{1}{2}}\left(
z-y\right)  f(z)d\Omega_{z}d\Omega_{y}%
\end{align*}

\ \ 

We need to show that this sum is again an orthogonal sum from the orthogonal decomposition%

\[
W^{1,2}\left(  \Omega\right)  =A^{2,2}\left(  \Omega\right)  \oplus\left(
D^{\frac{1}{2}}\right)  ^{2}\left(  W_{0}^{2,2}\left(  \Omega\right)  \right)
\]
proven in \cite{dlakew1}. Clearly \ \ %

\[%
%TCIMACRO{\dint \limits_{\partial\Omega}}%
%BeginExpansion
{\displaystyle\int\limits_{\partial\Omega}}
%EndExpansion
\Gamma_{\frac{1}{2}}\left(  y-x\right)  \upsilon_{\frac{1}{2}}\left(
y\right)  g_{1}\left(  y\right)  d\partial\Omega_{y}-%
%TCIMACRO{\dint \limits_{\Omega}}%
%BeginExpansion
{\displaystyle\int\limits_{\Omega}}
%EndExpansion%
%TCIMACRO{\dint \limits_{\partial\Omega}}%
%BeginExpansion
{\displaystyle\int\limits_{\partial\Omega}}
%EndExpansion
\Gamma_{\frac{1}{2}}\left(  y-x\right)  \Gamma_{\frac{1}{2}}\left(
z-y\right)  \upsilon_{\frac{1}{2}}\left(  z\right)  g_{2}(z)d\partial
\Omega_{z}d\Omega_{y}%
\]
is annihilated by $\left(  D^{\frac{1}{2}}\right)  ^{2}$ since \
\begin{align*}
& \left(  D^{\frac{1}{2}}\right)  ^{2}\left(
%TCIMACRO{\dint \limits_{\partial\Omega}}%
%BeginExpansion
{\displaystyle\int\limits_{\partial\Omega}}
%EndExpansion
\Gamma_{\frac{1}{2}}\left(  y-x\right)  \upsilon_{\frac{1}{2}}\left(
y\right)  g_{1}\left(  y\right)  d\partial\Omega_{y}-%
%TCIMACRO{\dint \limits_{\Omega}}%
%BeginExpansion
{\displaystyle\int\limits_{\Omega}}
%EndExpansion%
%TCIMACRO{\dint \limits_{\partial\Omega}}%
%BeginExpansion
{\displaystyle\int\limits_{\partial\Omega}}
%EndExpansion
\Gamma_{\frac{1}{2}}\left(  y-x\right)  \Gamma_{\frac{1}{2}}\left(
z-y\right)  \upsilon_{\frac{1}{2}}\left(  z\right)  g_{2}(z)d\partial
\Omega_{z}d\Omega_{y}\right) \\
& =\left(  D^{\frac{1}{2}}\right)  ^{2}\left(
%TCIMACRO{\dint \limits_{\partial\Omega}}%
%BeginExpansion
{\displaystyle\int\limits_{\partial\Omega}}
%EndExpansion
\Gamma_{\frac{1}{2}}\left(  y-x\right)  \upsilon_{\frac{1}{2}}\left(
y\right)  g_{1}\left(  y\right)  d\partial\Omega_{y}\right)  -\left(
D^{\frac{1}{2}}\right)  ^{2}\left(
%TCIMACRO{\dint \limits_{\Omega}}%
%BeginExpansion
{\displaystyle\int\limits_{\Omega}}
%EndExpansion%
%TCIMACRO{\dint \limits_{\partial\Omega}}%
%BeginExpansion
{\displaystyle\int\limits_{\partial\Omega}}
%EndExpansion
\Gamma_{\frac{1}{2}}\left(  y-x\right)  \Gamma_{\frac{1}{2}}\left(
z-y\right)  \upsilon_{\frac{1}{2}}\left(  z\right)  g_{2}(z)d\partial
\Omega_{z}d\Omega_{y}\right) \\
& =-D^{\frac{1}{2}}\left(
%TCIMACRO{\dint \limits_{\partial\Omega}}%
%BeginExpansion
{\displaystyle\int\limits_{\partial\Omega}}
%EndExpansion
\Gamma_{\frac{1}{2}}\left(  z-y\right)  \upsilon_{\frac{1}{2}}\left(
z\right)  g_{2}(z)d\partial\Omega_{z}d\Omega_{y}\right)  =0\text{.}%
\end{align*}

Thus%

\[%
%TCIMACRO{\dint \limits_{\partial\Omega}}%
%BeginExpansion
{\displaystyle\int\limits_{\partial\Omega}}
%EndExpansion
\Gamma_{\frac{1}{2}}\left(  y-x\right)  \upsilon_{\frac{1}{2}}\left(
y\right)  g_{1}\left(  y\right)  d\partial\Omega_{y}-%
%TCIMACRO{\dint \limits_{\Omega}}%
%BeginExpansion
{\displaystyle\int\limits_{\Omega}}
%EndExpansion%
%TCIMACRO{\dint \limits_{\partial\Omega}}%
%BeginExpansion
{\displaystyle\int\limits_{\partial\Omega}}
%EndExpansion
\Gamma_{\frac{1}{2}}\left(  y-x\right)  \Gamma_{\frac{1}{2}}\left(
z-y\right)  \upsilon_{\frac{1}{2}}\left(  z\right)  g_{2}(z)d\partial
\Omega_{z}d\Omega_{y}\in A^{2,2}\left(  \Omega\right)  \text{.}%
\]

\ \ 

Next, we need to show that $\exists\xi\in W_{0}^{2,2}\left(  \Omega\right)  $
such that
\[%
%TCIMACRO{\dint \limits_{\Omega}}%
%BeginExpansion
{\displaystyle\int\limits_{\Omega}}
%EndExpansion%
%TCIMACRO{\dint \limits_{\Omega}}%
%BeginExpansion
{\displaystyle\int\limits_{\Omega}}
%EndExpansion
\Gamma_{\frac{1}{2}}\left(  y-x\right)  \Gamma_{\frac{1}{2}}\left(
z-y\right)  f(z)d\Omega_{z}d\Omega_{y}=\left(  D^{\frac{1}{2}}\right)  ^{2}%
\xi\left(  x\right)  \text{.}%
\]
\ 

Clearly
\[
\xi\left(  x\right)  _{\mid\partial\Omega}=\left(  \xi\left(  x\right)
_{\mid\partial\Omega},D^{\frac{1}{2}}\xi\left(  x\right)  _{\mid\partial
\Omega}\right)  =\left(  0,0\right)  \text{.}%
\]
By applying the result of integration by parts above twice and the fact that%
\[
\xi_{\mid\partial\Omega}=0
\]
we have \ \
\[
\xi\left(  x\right)  =%
%TCIMACRO{\dint \limits_{\Omega}}%
%BeginExpansion
{\displaystyle\int\limits_{\Omega}}
%EndExpansion%
%TCIMACRO{\dint \limits_{\Omega}}%
%BeginExpansion
{\displaystyle\int\limits_{\Omega}}
%EndExpansion%
%TCIMACRO{\dint \limits_{\Omega}}%
%BeginExpansion
{\displaystyle\int\limits_{\Omega}}
%EndExpansion%
%TCIMACRO{\dint \limits_{\Omega}}%
%BeginExpansion
{\displaystyle\int\limits_{\Omega}}
%EndExpansion
\Gamma_{\frac{1}{2}}\left(  y-x\right)  \Gamma_{\frac{1}{2}}\left(
z-y\right)  \Gamma_{\frac{1}{2}}\left(  w-z\right)  \Gamma_{\frac{1}{2}%
}\left(  q-w\right)  f\left(  q\right)  d\Omega_{q}d\Omega_{w}d\Omega
_{z}d\Omega_{y}\in W_{0}^{2,2}\left(  \Omega\right)  \text{.}%
\]

Setting \ %

\[
\left[  u\right]  _{g_{1},g_{2}}=%
%TCIMACRO{\dint \limits_{\partial\Omega}}%
%BeginExpansion
{\displaystyle\int\limits_{\partial\Omega}}
%EndExpansion
\Gamma_{\frac{1}{2}}\left(  y-x\right)  \upsilon_{\frac{1}{2}}\left(
y\right)  g_{1}\left(  y\right)  d\partial\Omega_{y}-%
%TCIMACRO{\dint \limits_{\Omega}}%
%BeginExpansion
{\displaystyle\int\limits_{\Omega}}
%EndExpansion%
%TCIMACRO{\dint \limits_{\partial\Omega}}%
%BeginExpansion
{\displaystyle\int\limits_{\partial\Omega}}
%EndExpansion
\Gamma_{\frac{1}{2}}\left(  y-x\right)  \Gamma_{\frac{1}{2}}\left(
z-y\right)  \upsilon_{\frac{1}{2}}\left(  z\right)  g_{2}(z)d\partial
\Omega_{z}d\Omega_{y}%
\]
and \ \ %

\[
\left[  u\right]  _{f}=%
%TCIMACRO{\dint \limits_{\Omega}}%
%BeginExpansion
{\displaystyle\int\limits_{\Omega}}
%EndExpansion%
%TCIMACRO{\dint \limits_{\Omega}}%
%BeginExpansion
{\displaystyle\int\limits_{\Omega}}
%EndExpansion
\Gamma_{\frac{1}{2}}\left(  y-x\right)  \Gamma_{\frac{1}{2}}\left(
z-y\right)  f(z)d\Omega_{z}d\Omega_{y}%
\]

we have
\[
u=\left[  u\right]  _{g_{1},g_{2}}\uplus\left[  u\right]  _{f}%
\]
which proves $\left(  i\right)  $.

\ 

$\left(  ii\right)  $ follows from the fact that
\[
u=\left[  u\right]  _{g_{1},g_{2}}\uplus\left[  u\right]  _{f}\text{.}%
\]

\ \ 
\end{proof}

\section{\textbf{Applications}}

We consider the application of fractional orthogonal decomposition. A common
field of investigation of a physical phenomena to consider is a vector field
$F$ of electromagnetism, which is the orthogonal sum of the fractional
gradient of the scalar potential $\psi:=-\left(  \nabla^{2}\right)
^{\frac{-3}{4}}\left(  \nabla.F\right)  $ and the fractional curl of the
vector potential $\phi:=\left(  \nabla^{2}\right)  ^{\frac{-3}{4}}\left(
\nabla\times F\right)  $. That is%
\[
F=\underset{\text{irrotational/normal }}{\underbrace{\nabla^{\frac{1}{2}}\psi
}}\uplus\underset{\text{solenoidal/tangential}}{\underbrace{Curl^{\frac{1}{2}%
}\phi}}%
\]

One can verify that the curl
\[
\nabla^{\frac{1}{2}}\times\left(  -\nabla^{\frac{1}{2}}\psi\right)  =0
\]
That is $\nabla^{\frac{1}{2}}\psi$ is irrotational and the divergence of the
fractional half vector potential
\[
\nabla^{\frac{1}{2}}\times\phi,\nabla^{\frac{1}{2}}\cdot\left(  \nabla
^{\frac{1}{2}}\times\phi\right)  =0.
\]

We need to show that the sum is an orthogonal in the sense of Hilbert space, $%
%TCIMACRO{\tciLaplace}%
%BeginExpansion
\mathcal{L}%
%EndExpansion
^{2}\left(  \Omega\right)  $ That is

$\qquad$%
\[
(i)\ \ \langle\nabla^{\frac{1}{2}}\psi,\nabla^{\frac{1}{2}}\times\phi\rangle=0
\]

\qquad%
\[
(ii)\ \parallel F\parallel^{2}=\parallel\nabla^{\frac{1}{2}}\psi\parallel
^{2}+\parallel\nabla^{\frac{1}{2}}\times\phi\parallel^{2}%
\]

\ 

We investigate:
\[
\langle-\nabla^{\frac{1}{2}}\psi,\nabla^{\frac{1}{2}}\times\phi\rangle
=\int_{\Omega}\left(  -\nabla^{\frac{1}{2}}\psi\right)  \left(  \nabla
^{\frac{1}{2}}\times\phi\right)  dv
\]

From
\[
\nabla^{\frac{1}{2}}\cdot\left(  \psi\nabla^{\frac{1}{2}}\times\phi\right)
=\psi\left(  \nabla^{\frac{1}{2}}\cdot\nabla^{\frac{1}{2}}\times\phi\right)
+\nabla^{\frac{1}{2}}\psi\cdot\nabla^{\frac{1}{2}}\times\phi
\]

we have%
\[
\nabla^{\frac{1}{2}}\psi\cdot\nabla^{\frac{1}{2}}\times\phi=\nabla^{\frac
{1}{2}}\cdot\left(  \psi\left(  \nabla^{\frac{1}{2}}\times\phi\right)
\right)  -\psi\left(  \underset{\underset{0}{\shortparallel}%
}{\underbrace{\nabla^{\frac{1}{2}}\cdot\left(  \nabla^{\frac{1}{2}}\times
\phi\right)  }}\right)
\]

Thus, $\ $%
\begin{align*}
\langle-\nabla^{\frac{1}{2}}\psi,\nabla^{\frac{1}{2}}\times\phi\rangle &
=-\int_{\Omega}\nabla^{\frac{1}{2}}\cdot\left(  \psi\left(  \nabla^{\frac
{1}{2}}\times\phi\right)  \right)  dv\\
& =-%
%TCIMACRO{\doint \limits_{\partial\Omega}}%
%BeginExpansion
{\displaystyle\oint\limits_{\partial\Omega}}
%EndExpansion
\psi\underset{\text{irrotational}}{\underbrace{\left(  \nabla^{\frac{1}{2}%
}\times\phi\right)  }}\cdot\overset{\wedge}{n}d\partial\Omega\\
& =0
\end{align*}

Since
\[
\underset{\text{irrotational/normal}}{\underbrace{\left(  \nabla^{\frac{1}{2}%
}\times\phi\right)  }}\cdot\underset{\text{normal vector}%
}{\underbrace{\overset{\wedge}{n}_{\mid\partial\Omega}}}=0
\]

\begin{proposition}
$\parallel F\parallel^{2}=\parallel F_{\text{tangential}}\parallel
^{2}+\parallel F_{\text{normal}}\parallel^{2}=\int_{\Omega}\mid\nabla
^{\frac{1}{2}}\psi\mid^{2}dv+\int_{\Omega}\mid\nabla^{\frac{1}{2}}\times
\phi\mid^{2}dv$
\end{proposition}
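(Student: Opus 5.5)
The plan is to expand the square of the $\mathcal{L}^{2}(\Omega)$ norm of $F=\nabla^{\frac{1}{2}}\psi\uplus\nabla^{\frac{1}{2}}\times\phi$ by bilinearity and kill the cross term with the orthogonality relation $(i)$ just obtained. Writing $F_{\text{normal}}:=\nabla^{\frac{1}{2}}\psi$ and $F_{\text{tangential}}:=\nabla^{\frac{1}{2}}\times\phi$, I compute
\[
\parallel F\parallel^{2}=\langle F,F\rangle=\parallel\nabla^{\frac{1}{2}}\psi\parallel^{2}+2\langle\nabla^{\frac{1}{2}}\psi,\nabla^{\frac{1}{2}}\times\phi\rangle+\parallel\nabla^{\frac{1}{2}}\times\phi\parallel^{2}.
\]
The inner product is real and symmetric on real vector fields, so the two mixed terms combine into one. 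The middle term is exactly the quantity shown to vanish above. Scaling by $-1$ does not affect this, so $\langle\nabla^{\frac{1}{2}}\psi,\nabla^{\frac{1}{2}}\times\phi\rangle=-\langle-\nabla^{\frac{1}{2}}\psi,\nabla^{\frac{1}{2}}\times\phi\rangle=0$.

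The key steps, in order, are as follows. First, I recall the fractional product rule $\nabla^{\frac{1}{2}}\cdot(\psi\,\nabla^{\frac{1}{2}}\times\phi)=\psi\,\nabla^{\frac{1}{2}}\cdot(\nabla^{\frac{1}{2}}\times\phi)+\nabla^{\frac{1}{2}}\psi\cdot\nabla^{\frac{1}{2}}\times\phi$ used above. Second, I use the identity $\nabla^{\frac{1}{2}}\cdot(\nabla^{\frac{1}{2}}\times\phi)=0$ to reduce the cross term to a pure divergence. Third, I apply the divergence theorem to turn that divergence into the boundary integral $\oint_{\partial\Omega}\psi\,(\nabla^{\frac{1}{2}}\times\phi)\cdot\hat{n}\,d\partial\Omega$. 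Fourth, I use the tangential condition $(\nabla^{\frac{1}{2}}\times\phi)\cdot\hat{n}\mid_{\partial\Omega}=0$ to conclude that this boundary integral is zero. Finally, I write each remaining norm as an integral, $\parallel\nabla^{\frac{1}{2}}\psi\parallel^{2}=\int_{\Omega}\mid\nabla^{\frac{1}{2}}\psi\mid^{2}dv$ and similarly for the curl term, which gives the stated formula. Both $\psi$ and $\phi$ are defined through $(\nabla^{2})^{-\frac{3}{4}}$ applied to $\nabla\cdot F$ and $\nabla\times F$, so the components lie in $\mathcal{L}^{2}(\Omega)$ whenever $F$ does. This makes every term finite.

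The main obstacle is justifying the third and fourth steps for half-order operators. The product rule and the divergence theorem are classical for integer order, but $\nabla^{\frac{1}{2}}$ is nonlocal. I would handle this the way the paper handles $D^{\frac{1}{2}}$: work with the Caputo-type fractional integration-by-parts formula already used for the Borel--Pompeiu representation (\ref{borel-pompeiu}), with the fractional normal $\upsilon_{\frac{1}{2}}$ in place of $\hat{n}$. I would first verify the identity for smooth $\psi,\phi$ and then pass to $\mathcal{L}^{2}$ by density.
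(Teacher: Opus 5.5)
Your route is the paper's own. The paper gives no separate proof of this proposition; it rests on the preceding computation of $\langle-\nabla^{\frac{1}{2}}\psi,\nabla^{\frac{1}{2}}\times\phi\rangle$ followed by Pythagoras, which is your bilinear expansion. The expansion is fine. The gap is the cross term: of your four steps only the second survives (it holds when the one-variable half-derivatives commute), and your proposed repair does not close the others.

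\emph{Step 1 fails.} A nonlocal half-derivative does not obey the Leibniz rule; any operator that does, on smooth functions, is a first-order differential operator. Concretely, for the paper's Caputo derivative, $D^{\frac{1}{2}}(x\cdot x)=\frac{8}{3\sqrt{\pi}}x^{\frac{3}{2}}$, whereas $2x\,D^{\frac{1}{2}}x=\frac{4}{\sqrt{\pi}}x^{\frac{3}{2}}$. So the cross term is not a pure fractional divergence.

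\emph{Step 3 fails.} The classical divergence theorem does not hold for $\nabla^{\frac{1}{2}}\cdot$: already $\int_{0}^{1}D^{\frac{1}{2}}x\,dx=\frac{4}{3\sqrt{\pi}}\neq1$. Genuine fractional integration by parts moves a left-sided derivative onto the other factor as a right-sided Riemann--Liouville derivative, with nonlocal boundary terms. So invoking (\ref{borel-pompeiu}) with the undefined $\upsilon_{\frac{1}{2}}$ does not give the symmetric identity you need. A density argument also only transfers identities; it cannot manufacture a boundary condition.

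\emph{Step 4 is the most serious.} The condition $(\nabla^{\frac{1}{2}}\times\phi)\cdot\hat{n}=0$ on $\partial\Omega$ is simply assumed; calling the component ``tangential'' does not make it so, and nothing in $\phi=(\nabla^{2})^{-\frac{3}{4}}(\nabla\times F)$ forces it. On a bounded domain this condition is the whole content of the orthogonality. Even for integer order, $\int_{\Omega}\nabla a\cdot\nabla\times b\,dv=\oint_{\partial\Omega}a\,(\nabla\times b)\cdot\hat{n}\,d\partial\Omega$ is generally nonzero: for $a=z$ and $b=(0,x,0)$ on the unit ball it equals $\frac{4\pi}{3}$.

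An argument that works where the objects are well defined avoids integration by parts entirely. On $\mathbb{R}^{3}$ with $F\in\mathcal{L}^{2}(\mathbb{R}^{3})$, read $\nabla^{\frac{1}{2}}$ as $\nabla(-\triangle)^{-\frac{1}{4}}$ and $(\nabla^{2})^{-\frac{3}{4}}$ as $(-\triangle)^{-\frac{3}{4}}$. The Fourier transform then gives
\[
\widehat{\nabla^{\frac{1}{2}}\psi}(\xi)=\frac{\xi\,(\xi\cdot\widehat{F}(\xi))}{|\xi|^{2}},\qquad\widehat{\nabla^{\frac{1}{2}}\times\phi}(\xi)=\widehat{F}(\xi)-\frac{\xi\,(\xi\cdot\widehat{F}(\xi))}{|\xi|^{2}}.
\]
The two pieces are the images of $\widehat{F}(\xi)$ under complementary orthogonal projections, pointwise in $\xi$. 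Hence $F=\nabla^{\frac{1}{2}}\psi+\nabla^{\frac{1}{2}}\times\phi$ holds, and Plancherel gives the vanishing cross term, the Pythagorean identity and your finiteness claim.

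On a bounded $\Omega$ you would instead have to build a boundary condition into the definition of $\psi$ (a Neumann-type problem, as in the Helmholtz--Hodge decomposition) so that the remainder has zero normal trace. You would then need an integration-by-parts formula for that specific nonlocal operator with its correct adjoint. That ingredient is missing from your proposal, and from the paper's argument as well.
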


\end{document}